\makeatletter \@addtoreset{equation}{section} \makeatother
\renewcommand\thefigure{\thesection.\@arabic\c@figure}
\renewcommand\thetable{\thesection.\@arabic\c@table}
\newtheorem{theorem}{Theorem}[section]
\newtheorem{lemma}[theorem]{Lemma}
\newtheorem{proposition}[theorem]{Proposition}
\newtheorem{corollary}[theorem]{Corollary}
\newtheorem{remark}[theorem]{Remark}
\newcommand{\bb}[1]{{\mathbb #1}}
\newcommand{\mc}[1]{{\mathcal #1}}
\newcommand{\mb}[1]{{\mathbf #1}}
\newcommand{\bs}[1]{{\boldsymbol #1}}
\newcommand{\<}{\langle}
\renewcommand{\>}{\rangle}
\newcommand{\ca}{\mathcal{A}}
\newcommand{\cc}{\mathcal{C}}
\newcommand{\cl}{\mathcal{L}}
\newcommand{\cm}{\mathcal{M}}
\newcommand{\C}{\mathbb{C}}
\newcommand{\R}{\mathbb{R}}
\let\n=\eta
\let\r=\rho
\let\G=\Gamma
\newcommand{\1}{\,\rlap{\small 1}\kern.13em 1}
\newcommand{\sqr}[2]{{\vcenter{\hrule height.#2pt%
                      \hbox{\vrule width.#2pt height#1pt\kern#1pt%
                            \vrule width.#2pt}%
                      \hrule height.#2pt}}}
\newcommand{\cqfd}{\hfill$\mathchoice\sqr46\sqr46\sqr{1.5}2\sqr{1}2$\par}
\renewcommand{\limsup}{\mathop{\overline{\hbox{\rm lim}}}}
\renewcommand{\liminf}{\mathop{\underline{\hbox{\rm lim}}}}
\newcommand{\Es}{\mathbb{E}}
\newcommand{\Pb}{\mathbb{P}}
\newcommand{\mss}{\mu_{\text{ss}}^N}
\newcommand{\Qss}{{\text{\bf Q}}_{\text{ss}}}
\newcommand{\bro}{{\bar{\rho}}}
\title[Boundary driven exclusion processes]{Hydrostatics and dynamical
  large deviations of boundary driven gradient symmetric exclusion
  processes}
\author{J. Farfan, C. Landim \and M. Mourragui}
\address{{\rm J. Farfan} \newline
IMPA, Estrada Dona Castorina 110,
CEP 22460 Rio de Janeiro, Brasil
\newline e-mail: \rm \texttt{jonathan@impa.br}}
\address{{\rm C. Landim} \newline
IMPA, Estrada Dona Castorina 110,
CEP 22460 Rio de Janeiro, Brasil
\newline e-mail: \rm \texttt{landim@impa.br}, \rm \texttt{Claudio.Landim@univ-rouen.fr}
}
\address{{\rm M. Mourragui}\newline
LMRS, UMR 6085 CNRS-Universit\'e de Rouen
Avenue de l'Universit\'e, BP.12.  Technop\^ole du Madrillet
F76801 Saint-'Etienne-du-Rouvray. France
\newline e-mail: \rm \texttt{Mustapha.Mourragui@univ-rouen.fr}}
\begin{document}

\noindent \keywords{Boundary driven exclusion processes, Stationary
  nonequilibrium states, Hydrostatics, Fick's law, large deviations} 

\subjclass[2000]{Primary 82C22; Secondary 60F10, 82C35}

\begin{abstract}
  We prove hydrostatics of boundary driven gradient exclusion
  processes, Fick's law and we present a simple proof of the dynamical
  large deviations principle which holds in any dimension.
\end{abstract}

\maketitle


\section{introduction}


Statical and dynamical large deviations principles of boundary driven
interacting particles systems has attracted attention recently as a
first step in the understanding of nonequilibrium thermodynamics (cf.
\cite{BDGJL7, bd, Der} and references therein).

This article has two purposes. First, inspired by the dynamical
approach to stationary large deviations, introduced by Bertini et al.
in the context of boundary driven interacting particles systems
\cite{bdgjl2}, we present a proof of the hydrostatics based on the
hydrodynamic behaviour of the system and on the fact that the
stationary profile is a global attractor of the hydrodynamic
equation. 

More precisely, if $\bar \rho$ represents the stationary density
profile and $\pi^N$ the empirical measure, to prove that $\pi^N$
converges to $\bar\rho$ under the stationary state $\mu^N_{ss}$, we
first prove the hydrodynamic limit stated as follows. If we start from
an initial configuration which has a density profile $\gamma$, on the
diffusive scale the empirical measure converges to an absolutely
continuous measure, $\pi(t,du) = \rho(t,u) du$, whose density $\rho$
is the solution of the parabolic equation
\begin{equation*}
\left\{ 
\begin{array}{l}
\partial_t \rho = (1/2) \nabla \cdot D(\rho) \nabla \rho\, , \\ 
\rho (0 ,\cdot) =\; \gamma (\cdot) \, ,   \\
\rho(t,\cdot) = b(\cdot) \text{ on $\Gamma$}\;,
\end{array}
\right. 
\end{equation*}
where $D$ is the diffusivity of the system, $\nabla$ the gradient, $b$
is the boundary condition imposed by the stochastic dynamics and
$\Gamma$ is the boundary of the domain in which the particles evolve.
Since for all initial profile $0\le \gamma \le 1$, the solution
$\rho_t$ is bounded above, resp. below, by the solution with initial
condition equal to $1$, resp. $0$, and since these solutions converge,
as $t\uparrow\infty$, to the stationary profile $\bar\rho$,
hydrostatics follows from the hydrodynamics and the weak compactness
of the space of measures.

The second contribution of this article is a simplification of the
proof of the dynamical large deviations of the empirical measure.  The
original proof \cite{kov, dv, KL} relies on the convexity of the rate
functional, a very special property only fulfilled by very few
interacting particle systems as the symmetric simple exclusion
process. The extension to general processes \cite{q, QRV, blm1} is
relatively technical. The main difficulty appears in the proof of the
lower bound where one needs to show that any trajectory $\lambda_t$,
$0\le t\le T$, with finite rate function, $I_T(\lambda)<\infty$, can
be approximated by a sequence of smooth trajectories $\{\lambda^n :
n\ge 1\}$ such that
\begin{equation}
\label{int2}
\lambda^n \longrightarrow \lambda \quad\text{and}\quad 
I_T(\lambda^n)  \longrightarrow  I_T(\lambda)\;.
\end{equation}

This property is proved by approximating in several steps a general
trajectory $\lambda$ by a sequence of profiles, smoother at each step,
the main ingredient being the regularizing effect of the hydrodynamic
equation. This part of the proof is quite elaborate and relies on
properties of the Green kernel associated to the second order
differential operator.

We propose here a simpler proof. It is well known that a path
$\lambda$ with finite rate function may be obtained from the
hydrodynamical path through an external field. More precisely, if
$I_T(\lambda)<\infty$, there exists $H$ such that
\begin{equation*}
I_T(\lambda) \;=\; \frac 12 \int_0^T dt \int \sigma(\lambda_t) \, 
[\nabla H_t]^2 \, dx\;,
\end{equation*}
 where $\sigma$ is the mobility of the system and $H$ is related to
$\lambda$ by the equation
\begin{equation}
\label{int1}
\left\{
\begin{array}{l}
{\displaystyle
\partial_t \lambda - (1/2) \nabla \cdot D(\lambda)\nabla \lambda 
\;=\; - \nabla \cdot [\sigma(\lambda) \nabla H_t] }\\
{\displaystyle H(t,\cdot)=0 \text{ at the boundary}\;.}
\end{array}
\right.  
\end{equation}
This is an elliptic equation for the unknown function $H$ for each
$t\ge 0$.  Note that the left hand side of the first equation is the
hydrodynamical equation. Instead of approximating $\lambda$ by a
sequence of smooth trajectories, we show that approximating $H$ by a
sequence of smooth functions, the corresponding smooth solutions of
\eqref{int1} converge in the sense \eqref{int2} to $\lambda$. This
approach, closer to the original one, simplifies considerably the
proof of the hydrodynamical large deviations.


\section{Notation and Results}

\label{sec2}

Fix a positive integer $d\ge 2$.  Denote by $\Omega$ the open set $(-1,1)
\times \bb T^{d-1}$, where $\bb T^{k}$ is the $k$-dimensional torus
$[0,1)^k$, and by $\Gamma$ the boundary of $\Omega$: $\Gamma = \{(u_1,
\dots , u_d)\in [-1,1] \times \bb T^{d-1} : u_1 = \pm 1\}$.

For an open subset $\Lambda$ of $\bb R\times\bb T^{d-1}$, $\mc C^m
(\Lambda)$, $1\leq m\leq +\infty$, stands for the space of
$m$-continuously differentiable real functions defined on $\Lambda$.
Let $\mc C^m_0 (\Lambda)$ (resp. $\mc C^m_c (\Lambda)$), $1\leq m\leq
+\infty$, be the subset of functions in $\mc C^m (\Lambda)$ which
vanish at the boundary of $\Lambda$ (resp. with compact support in
$\Lambda$).

Fix a positive function $b: \Gamma \to \bb R_+$. Assume that there
exists a neighbourhood $V$ of $\Omega$ and a smooth function $\beta : V
\to (0,1)$ in $\mc C^2(V)$ such that $\beta$ is bounded below by a
strictly positive constant, bounded above by a constant smaller than
$1$ and such that the restriction of $\beta$ to $\Gamma$ is equal to
$b$. 

For an integer $N\ge 1$, denote by $\bb T_N^{d-1}=\{0,\dots,
N-1\}^{d-1}$, the discrete $(d-1)$-dimensional torus of length $N$.
Let $\Omega_N=\{-N+1,\ldots,N-1\} \times \bb T_N^{d-1}$ be the
cylinder in $\bb Z^d$ of length $2N-1$ and basis $\bb T_N^{d-1}$ and let
$\G_N=\{(x_1, \dots, x_{d}) \in \bb Z\times \bb T_N^{d-1}\,|\, x_1 =\pm
(N-1)\}$ be the boundary of $\Omega_N$.  The elements of $\Omega_N$
are denoted by letters $x,y$ and the elements of $\Omega$ by the
letters $u, v$.

We consider boundary driven symmetric exclusion processes on
$\Omega_N$.  A configuration is described as an element $\n$ in
$X_N=\{0,1\}^{\Omega_N}$, where $\n (x)=1$ (resp. $\n (x)=0$) if site
$x$ is occupied (resp. vacant) for the configuration $\eta$.  At the
boundary, particles are created and removed in order for the local
density to agree with the given density profile $b$.

The infinitesimal generator of this Markov process can be decomposed
in two pieces:
\begin{equation}
\label{eq:1}
\cl_N=\cl_{N,0}+\cl_{N,b}\;,
\end{equation}
where $\cl_{N,0}$ corresponds to the bulk dynamics and $\cl_{N,b}$ to
the boundary dynamics. The action of the generator $\cl_{N,0}$ on
functions $f: X_N\to \R$ is given by
\begin{equation*}
\big(\cl_{N,0}f\big) (\n)=\sum_{i=1}^d \sum_{x}r_{x,x+e_i} (\n) 
\left[f(\n^{x,x+e_i})-f(\n)\right] , 
\end{equation*} 
where $(e_1,\ldots,e_d)$ stands for the canonical basis of $\R^d$ and
where the second sum is performed over all $x\in \bb Z^d$ such that
$x,x+e_i\in\Omega_N$. For $x,y\in \Omega_N$, $\n^{x,y}$ is the
configuration obtained from $\n$ by exchanging the occupations variables
$\n(x)$ and $\n(y)$:
\begin{equation*}
\n^{x,y}(z)= 
\left\{ 
\begin{array}{lll}
\n (y) & \text{if}\ z= x \,, \\
\n(x) & \text{if}\ z=y \,, \\
\n(z) & \text{if}\ z\not=x,y \, .
\end{array}
     \right. 
\end{equation*}

For $a>- 1/2$, the rate functions $r_{x,x+e_i} (\n)$ are given by
\begin{equation*}
r_{x,x+e_i} (\n) \;=\; 1 \;+\; a \big\{ \n(x-e_i) + \n(x+2e_i)\big\}
\end{equation*}
if $x-e_i$, $x+2e_i$ belongs to $\Omega_N$.  At the boundary, the
rates are defined as follows. Let $\check{x}=(x_2,\cdots ,x_d) \in
\bb T_N^{d-1}$. Then,
\begin{equation*}
\begin{array}{l} 
\vphantom{\Big\{}
r_{(-N+1,\check{x}),(-N+2,\check{x})} (\n) \;=\;
1 \;+\; a \big\{ \n(-N+3,\check{x}) + b (-1, \check{x}/N) \big\} 
\, ,  \\
\vphantom{\Big\{}
r_{(N-2,\check{x}),(N-1,\check{x}) }(\eta)\;=\;
1 \;+\; a \big\{ \n(N-3,\check{x}) +  b(1, \check{x}/N)\big\} \, .
\end{array}
\end{equation*}

The non-conservative boundary dynamics can be described as
follows. For any function $f: X_N \to \bb R$,
\begin{equation*}
\left(\cl_{N,b} f\right)(\n) = \sum_{x\in\G_N} C^b( x ,\eta) 
\big[f(\eta^x) -f(\eta)\big]\, ,
\end{equation*}
where $\n^{x}$ is the configuration obtained from $\n$ by flipping the
occupation variable at site $x$:
\begin{equation*}
\n^x(z)= \left\{ \begin{array}{ll}
\n (z) & \text{if}\ z\ne x \\
1-\n(x) & \text{if}\ z=x\end{array}
\right. 
\end{equation*}
and the rates $C^b(x, \cdot)$ are chosen in order for the Bernoulli
measure with density $b(\cdot)$ to be reversible for the flipping
dynamics restricted to this site:

\begin{eqnarray*}
C^b \big((-N+1,\check{x}),\eta\big) & = &
\n (-N+1,\check{x}) \big[ 1-b(-1,\check{x}/N)\big]
\\
&& \qquad\qquad+\big[1-\n (-N+1,\check{x})\big] b(-1,\check{x}/N) \, ,
\\
C^b \big((N-1,\check{x}),\eta\big) & = &
\n (N-1,\check{x}) \big[ 1-b(1,\check{x}/N)\big]
\\
&& \qquad\qquad+\big[1-\n (N-1,\check{x})\big] b(1,\check{x}/N) \, ,
\end{eqnarray*}
where $\check{x}=(x_2,\cdots,x_d)\in\bb T^{d-1}_N$, as above.

Denote by $\{\eta_t : t\ge 0\}$ the Markov process associated to the
generator $\cl_N$ \emph{speeded up} by $N^2$. For a smooth function
$\rho :\Omega\to (0,1)$, let $\nu_{\rho(\cdot)}^N$ be the Bernoulli
product measure on $X_N$ with marginals given by
\begin{equation*}
\nu_{\r(\cdot)}^N(\n(x)=1) = \r(x/N)\;.
\end{equation*}
It is easy to see that the Bernoulli product measure associated to
any constant function is invariant for the process with generator
$\cl_{N,0}$.  Moreover, if $b(\cdot)\equiv b$ for some constant $b$
then the Bernoulli product measure associated to the constant density
$b$ is reversible for the full dynamics $\cl_N$.

\subsection{Hydrostatics}

Denote by $\mss$ the unique stationary state of the irreducible Markov
process $\{\eta_t : t\ge 0\}$. We examine in Section \ref{sec3} the
asymptotic behavior of the empirical measure under the stationary
state $\mss$.

Let $\cm = {\cm}(\Omega)$ be the space of positive measures on
$\Omega$ with total mass bounded by $2$ endowed with the weak topology.
For each configuration $\eta$, denote by $\pi^N = \pi^N(\eta)$ the
positive measure obtained by assigning mass $N^{-d}$ to each particle
of $\eta$~:
\begin{equation*} 
\pi^N \, =\, N^{-d}\sum_{x\in\Omega_N}\eta(x)\, \delta_{x/N}\; ,
\end{equation*}
where $\delta_{u}$ is the Dirac measure concentrated on $u$. 

To define rigorously the quasi-linear elliptic problem the empirical
measure is expected to solve, we need to introduce some Sobolev
spaces.  Let $L^2(\Omega)$ be the Hilbert space of functions $G:\Omega
\to \C$ such that $\int_\Omega | G(u) |^2 du <\infty$ equipped with
the inner product
\begin{equation*}
\<G,J\>_2 =\int_\Omega G(u) \, {\bar J} (u) \, du\; ,
\end{equation*}
where, for $z\in\C$, $\bar z$ is the complex conjugate of $z$ and
$|z|^2 =z{\bar z}$. The norm of $L^2(\Omega)$ is denoted by $\|
\cdot \|_2$.

Let $H^1(\Omega)$ be the Sobolev space of functions $G$ with
generalized derivatives $\partial_{u_1} G, \dots , \partial_{u_d} G$
in $L^2(\Omega)$. $H^1(\Omega)$ endowed with the scalar product
$\<\cdot, \cdot\>_{1,2}$, defined by
\begin{equation*}
\<G,J\>_{1,2} = \< G, J \>_2 + \sum_{j=1}^d
\<\partial_{u_j} G \, , \, \partial_{u_j} J \>_2\;,
\end{equation*}
is a Hilbert space. The corresponding norm is denoted by
$\|\cdot\|_{1,2}$.

\renewcommand{\labelenumi}{({\bf S\theenumi})}

Let $\varphi:[0,1]\to \bb R_+$ be given by $\varphi (r) = r (1+ ar)$,
let $\nabla \rho$ represent the gradient of some function $\rho$ in
$H^1(\Omega)$: $\nabla \rho=(\partial_{u_1}\rho,\ldots$ ,
$\partial_{u_d}\rho)$, and let $\| \cdot \|$ be the Euclidean norm:
$\|(v_1,\ldots,v_d ) \|^2 =\sum_{1\le i\le d} v_i^2$.  A function
$\rho :\Omega \to [0,1]$ is said to be a weak solution of the elliptic
boundary value problem
\begin{equation}
\label{f01}
 \left\{ \begin{array}{lll}
 \Delta \varphi(\rho) \; =\; 0 \, & \hbox{ on } \Omega\, ,\\ 
\rho \;=\; b & \hbox{ on } \Gamma\, ,
\end{array}
     \right. 
\end{equation}
if
\begin{enumerate}
\item $\rho$ belongs to $H^1(\Omega)$:
\begin{equation*}
\int_\Omega {\parallel\nabla \rho(u)\parallel}^2 
du \;<\; \infty \; .
\end{equation*}

\item For every function $G\in {\cc}^{2}_0\left(\Omega \right)$,
\begin{equation*}
\int_\Omega \big(\Delta G\big)(u) \, \varphi \big(\rho(u)\big) \, du 
=\int_\Gamma \varphi(b(u)) \, \text{\bf n}_1 (u) \,
(\partial_{u_1}  G) (u) \text{d} \text{S}\; ,
\end{equation*}
where {\bf n}=$(\text{\bf n}_1,\ldots ,\text{\bf n}_d)$ stands for the
outward unit normal vector to the boundary surface $\Gamma$ and
$\text{d} \text{S}$ for an element of surface on $\Gamma$.
\end{enumerate}
\smallskip

We prove in Section \ref{sec5} existence and uniqueness of weak
solutions of \eqref{f01}. The first main result of this article
establishes a law of large number for the empirical measure under
$\mss$. Let $\overline\Omega=[-1,1]\times\bb T^{d-1}$ and denote by
$E^\mu$ expectation with respect to a probability measure $\mu$.
Moreover, for a measure $m$ in $\mc M$ and a continuous function
$G:\Omega \to \bb R$, denote by $\<m, G\>$ the integral of $G$ with
respect to $m$:
\begin{equation*}
\<m, G\> \;=\; \int_{\Omega} G(u) \, m(du)\;.
\end{equation*}

\begin{theorem}
\label{th1} For any continuous function $G:\overline\Omega\to\bb R$, 
\begin{equation*}
\lim_{N \rightarrow \infty} E^{\mss}
\Big[\, \Big| \<\pi^N , G \>
-\int_{\overline\Omega} G(u) \bro(u) du \Big| \, \Big] \;=\; 0\, ,
\end{equation*}
where $\bro(u)$ is the unique weak solution of \eqref{f01}.
\end{theorem}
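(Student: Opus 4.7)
The plan is to follow the route sketched in the introduction: combine a tightness argument with the dynamical hydrodynamic limit and the fact that $\bro$ is a global attractor of the hydrodynamic equation. Since $\cm$ is compact in the weak topology (the total mass is bounded by $2$ by construction), the family of laws $Q^N_{ss}$ of $\pi^N$ under $\mss$ is automatically relatively compact. Fix $T>0$ and, running the process with initial law $\mss$, denote by $\mb Q^N_{ss}$ the induced law on $D([0,T],\cm)$; a standard Aldous-type estimate applied to the martingales of $\langle \pi^N_t,G\rangle$ for $G\in\mc C^2_c(\Omega)$ yields tightness of $\{\mb Q^N_{ss}\}$.

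Let $\mb Q^*$ be any limit point. By the dynamical hydrodynamic limit, proved independently in the paper, $\mb Q^*$ is concentrated on trajectories $\pi_t(du)=\rho(t,u)\,du$ with $\rho$ a weak solution of $\partial_t\rho=(1/2)\Delta\varphi(\rho)$ on $\Omega$ with boundary datum $b$ on $\Gamma$, starting from some random initial profile $\gamma$ satisfying $0\le\gamma\le 1$ (the initial profile is random because $\mss$ is not associated with a deterministic density). Let $\rho^+(t,\cdot)$, respectively $\rho^-(t,\cdot)$, denote the solution of the same equation with deterministic initial datum $1$, respectively $0$. By the parabolic comparison principle, $\rho^-(t,u)\le\rho(t,u)\le\rho^+(t,u)$ for every $t\ge 0$ and a.e.\ $u$.

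The heart of the proof is the attractor property $\rho^\pm(t,\cdot)\to\bro$ in $L^1(\Omega)$ as $t\to\infty$. Because $\varphi(\rho^\pm)-\varphi(\bro)$ vanishes in the trace sense on $\Gamma$, one tests $\partial_t(\rho^\pm-\bro)=(1/2)\Delta[\varphi(\rho^\pm)-\varphi(\bro)]$ against $\varphi(\rho^\pm)-\varphi(\bro)$ itself, obtaining
\begin{equation*}
\frac{d}{dt}\int_\Omega\Psi(\rho^\pm(t,u))\,du \;=\; -\frac12\int_\Omega\|\nabla[\varphi(\rho^\pm(t,\cdot))-\varphi(\bro)]\|^2\,du,
\end{equation*}
where $\Psi$ is a nonnegative convex primitive of $\varphi-\varphi(\bro)$. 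Since $\varphi'(r)=1+2ar\ge\min(1,1+2a)>0$ on $[0,1]$ thanks to $a>-1/2$, the right-hand side controls the Dirichlet energy of $\rho^\pm-\bro$; a Poincar\'e inequality for functions with zero trace on $\Gamma$, combined with the uniqueness of the weak solution of \eqref{f01} proved in Section~\ref{sec5}, forces $\rho^\pm(t,\cdot)\to\bro$.

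Finally, stationarity of $\mss$ implies that the one-time marginal of $\pi^N_t$ under $\mb Q^N_{ss}$ is independent of $t$, and this property is inherited by $\mb Q^*$. Hence, for every continuous $G:\overline\Omega\to\bb R$ and every $t\ge 0$,
\begin{equation*}
E^{\mb Q^*}[\langle\pi_0,G\rangle] \;=\; E^{\mb Q^*}[\langle\pi_t,G\rangle].
\end{equation*}
Letting $t\to\infty$ and invoking the sandwich and the attractor property, the right-hand side converges to $\int_\Omega G(u)\bro(u)\,du$. Thus $\mb Q^*$ is concentrated on the deterministic measure $\bro(u)\,du$; every limit point coincides, so $\pi^N$ converges in probability to $\bro(u)\,du$ under $\mss$, and the uniform bound $|\langle\pi^N,G\rangle|\le 2\|G\|_\infty$ upgrades this to the $L^1$ convergence stated in the theorem. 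The single genuinely delicate piece of the argument will be the attractor step: one must run the energy method with non-constant boundary data and justify the integration by parts for merely weak solutions of \eqref{f01}; it is precisely the ellipticity of $\varphi'$ afforded by $a>-1/2$ and the smoothness of the extension $\beta$ of $b$ that make this work.
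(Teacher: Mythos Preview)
Your overall architecture coincides with the paper's: tightness of the path-space law started from $\mss$, identification of limit points as weak solutions of \eqref{f02} with a random initial datum, the comparison sandwich $\rho^0\le\rho\le\rho^1$, the attractor property $\|\rho^0_T-\rho^1_T\|_1\to 0$, and stationarity to close the argument. Two points need fixing.

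\medskip
\noindent\textbf{The stationarity step does not yield a Dirac mass.} From the equality of \emph{means} $E^{\mb Q^*}[\langle\pi_0,G\rangle]=E^{\mb Q^*}[\langle\pi_t,G\rangle]=\int G\bro$ you cannot conclude that $\mb Q^*$ is concentrated on $\bro(u)\,du$; this only says the barycentre is $\bro$. The paper avoids this by carrying the absolute value through the whole computation: for every $N$ and every $T>0$, stationarity of $\mss$ gives
\[
E^{\mss}\big[\,|\langle\pi^N,G\rangle-\langle\bro,G\rangle|\,\big]
\;=\;\Qss^{N}\big[\,|\langle\pi_T,G\rangle-\langle\bro,G\rangle|\,\big]\,.
\]
Passing to a subsequential limit $\Qss^*$ and invoking the sandwich,
\[
\limsup_k E^{\mu^{N_k}_{\text{ss}}}\big[\,|\langle\pi,G\rangle-\langle\bro,G\rangle|\,\big]
\;\le\;\|G\|_\infty\,\|\rho^0(T,\cdot)-\rho^1(T,\cdot)\|_1\,,
\]
and since the left-hand side is independent of $T$ one lets $T\uparrow\infty$. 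Your formulation can be repaired in the same way: use distributional stationarity to transport $|\langle\pi_0,G\rangle-\int G\bro|$, not just $\langle\pi_0,G\rangle$.

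\medskip
\noindent\textbf{The attractor argument is incomplete.} Your energy identity yields $\int_0^\infty\|\rho^\pm_t-\bro\|_2^2\,dt<\infty$, but a finite time-integral does not by itself force $\|\rho^\pm_t-\bro\|\to 0$. The paper closes this with an $L^1$-contraction estimate (Lemma~\ref{lem1-ann}): $t\mapsto\|\rho^1_t-\rho^2_t\|_1$ is nonincreasing for any two weak solutions, so a nonincreasing function with square-integrable tail must vanish at infinity. You should invoke (or prove) such a monotonicity statement. Incidentally, the paper's route to the integral bound (Lemma~\ref{s02}) is different from yours: it expands $\rho^1_t-\rho^2_t$ in Dirichlet eigenfunctions of $-\Delta$ rather than using an entropy/energy functional; both approaches work, but yours needs the extra care that $\Psi$ depends on $u$ through $\bro(u)$ and that the time derivative is justified for weak solutions.
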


Denote by $\Gamma_{-}$, $\Gamma_{+}$ the left and right boundary of
$\Omega$:
\begin{equation*}
\Gamma_{\pm} = \{ (u_1,\dots, u_d) \in \Omega \ |\ u_1 = \pm 1\}
\end{equation*} 
and denote by $W_{x,x+e_i}$, $x$, $x+e_i \in \Omega_N$, the
instantaneous current over the bond $(x,x+e_i)$. This is the rate at
which a particle jumps from $x$ to $x+e_i$ minus the rate at which a
particle jumps from $x+e_i$ to $x$. A simple computation shows that 
\begin{equation*}
W_{x,x+e_i} \;=\; \{h_{i,x}(\eta) - h_{i,x+e_i}(\eta)\}
\;+\; \{g_{i,x}(\eta) - g_{i,x+2e_i}(\eta)\}
\end{equation*}
provided $x-e_i$ and $x+2e_i$ belongs to $\Omega_N$. Here,
$h_{i,x}(\eta) = \eta(x) - a \eta(x+e_i)\eta(x-e_i)$ and
$g_{i,x}(\eta) = a \eta(x-e_i)\eta(x)$.

\begin{theorem}{\rm (Fick's law)}
\label{th2} 
Fix $-1<u<1$. Then,
\begin{eqnarray*}
\lim_{N\rightarrow \infty} E^{\mss}
\Big[ \frac {2N} { N^{d-1}} \sum_{y\in \bb T^{d-1}_N}
\!\!\!\!\! &\!\!\!\!\! &\!\!\!\!\!  
W_{([uN],y) , ([uN] +1 ,y)} \Big]  \\
&&  
=\int_{\Gamma_-} \varphi(b(v)) \, \text{\rm S}(dv)
-\int_{\Gamma_+} \varphi(b(v)) \, \text{\rm S} (dv)\, .
\end{eqnarray*}
\end{theorem}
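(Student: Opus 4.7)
The plan is to combine the gradient form of the current with stationarity of $\mss$, and reduce Fick's law to the observation that $F(s):=\int_{\bb T^{d-1}}\varphi(\bro(s,\check v))\,d\check v$ is affine on $[-1,1]$. The first ingredient is conservation. Let $M_k=\sum_{y\in\bb T^{d-1}_N}\n(k,y)$; for $k$ in the bulk $M_k$ involves no boundary site, so $\cl_{N,b}M_k=0$, and the transverse contributions to $\cl_{N,0}M_k$ cancel by periodicity on $\bb T^{d-1}_N$, leaving $\cl_{N,0}M_k=\sum_{y}[W_{(k-1,y),(k,y)}-W_{(k,y),(k+1,y)}]$. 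Stationarity $E^{\mss}[\cl_N M_k]=0$ then shows that $J_N:=E^{\mss}\bigl[\sum_y W_{(k,y),(k+1,y)}\bigr]$ is independent of $k$ in the bulk; in particular the quantity in Theorem \ref{th2} does not depend on $u$.

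Next I average and apply local equilibrium. Fix arbitrary $-1<s_0<s_1<1$ and set $k_i=[s_iN]$. Writing $(k_1-k_0+1)J_N$ as $\sum_{k=k_0}^{k_1}E^{\mss}[\sum_y W_{(k,y),(k+1,y)}]$ and inserting the given gradient representation $W_{x,x+e_1}=(h_{1,x}-h_{1,x+e_1})+(g_{1,x}-g_{1,x+2e_1})$, the sum telescopes to
\begin{equation*}
(k_1-k_0+1)\,J_N \;=\; E^{\mss}\sum_{y}\bigl\{A(k_0,y)\;-\;A(k_1+1,y)\bigr\},
\end{equation*}
where $A(k,y):=h_{1,(k,y)}+g_{1,(k,y)}+g_{1,(k+1,y)}$. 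A direct computation under the Bernoulli product measure of constant density $\rho$ gives $\widetilde A(\rho)=(\rho-a\rho^2)+a\rho^2+a\rho^2=\varphi(\rho)$. A standard replacement lemma under $\mss$ (one- and two-block estimates via Dirichlet-form bounds, which underlie the hydrodynamic proof of Theorem \ref{th1}) then yields $N^{-(d-1)}\sum_y E^{\mss}[A(k,y)]\to F(s)$ whenever $k/N\to s\in(-1,1)$. Consequently $\frac{2N}{N^{d-1}}J_N\to 2[F(s_0)-F(s_1)]/(s_1-s_0)$.

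Finally I show $F$ is affine with $F(\pm1)=\int_{\Gamma_\pm}\varphi(b)\,\text{d}\text{S}$. Applying the weak formulation of \eqref{f01} to test functions $G(u)=g(u_1)$ with $g\in\cc^2_0(-1,1)$ (so $G\in\cc^2_0(\Omega)$), and using that $\Delta G=g''$ and the outward normal on $\Gamma_\pm$ is $\pm e_1$, gives
\begin{equation*}
\int_{-1}^{1} g''(s)\,F(s)\,ds \;=\; g'(1)\!\int_{\Gamma_+}\!\varphi(b)\,\text{d}\text{S} \;-\; g'(-1)\!\int_{\Gamma_-}\!\varphi(b)\,\text{d}\text{S}.
\end{equation*}
Testing first against $g\in\cc^2_c(-1,1)$ shows $F''=0$ distributionally, so $F$ is affine; testing against general $g\in\cc^2_0(-1,1)$ then matches $F(\pm1)$ with $\int_{\Gamma_\pm}\varphi(b)\,\text{d}\text{S}$. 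Since $F$ is affine, the slope $[F(s_0)-F(s_1)]/(s_0-s_1)$ equals $[F(1)-F(-1)]/2$ for every $s_0\neq s_1$, so
\begin{equation*}
\lim_N\frac{2N}{N^{d-1}}J_N \;=\; F(-1)-F(1) \;=\; \int_{\Gamma_-}\!\varphi(b)\,\text{d}\text{S}-\int_{\Gamma_+}\!\varphi(b)\,\text{d}\text{S},
\end{equation*}
as required.

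The main obstacle is the replacement step in paragraph two: Theorem \ref{th1} only provides weak convergence of $\pi^N$, while $A(k,y)$ involves two-point correlations in $\eta$. The necessary passage from the empirical density to expectations of local functions is the standard but technical one/two-block argument, carried out under the stationary state $\mss$ via entropy and Dirichlet-form bounds that are already part of the hydrodynamic toolbox used to establish Theorem \ref{th1}.
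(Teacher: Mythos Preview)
Your argument is correct and follows precisely the route the paper indicates (``hydrostatics and elementary computations'' in the spirit of \cite{KLO}, exploiting the gradient structure): constancy of the slab current by stationarity, telescoping via the gradient representation, identification of the limit through the weak formulation of \eqref{f01}. Since the paper gives only a one-line reference, your write-up is in fact more detailed than the original.

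One remark on the replacement step you flag. As written you need $\frac{1}{N^{d-1}}\sum_{y}E^{\mss}[A(k_0,y)]\to F(s_0)$ on a single hyperplane, which is slightly stronger than what the standard one/two-block estimate plus Theorem~\ref{th1} provides (those are naturally averaged over a $d$-dimensional region). You can sidestep this cleanly using the very $k$-independence you established: average the telescoped identity over $k_0\in[(s_0-\delta)N,s_0N]$ and $k_1\in[s_1N,(s_1+\delta)N]$; the left side becomes $(\bar k_1-\bar k_0+1)J_N$, while the right side becomes a genuine $d$-dimensional box average $\frac{1}{\delta N^{d}}\sum_{k,y}E^{\mss}[A(k,y)]$ over slabs of width $\delta$. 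Now the usual replacement under $\mss$ and hydrostatics give convergence to $\frac{1}{\delta}\int_{s_0-\delta}^{s_0}F(s)\,ds$ (and similarly at $s_1$), and you let $\delta\downarrow 0$ afterwards. This keeps the argument entirely within the standard toolbox you invoke.
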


\begin{remark}
\label{s01}
We could have considered different bulk dynamics. The important
feature used here to avoid painful arguments is that the process is
gradient, which means that the currents can be written as the
difference of a local function and its translation.
\end{remark}

\subsection{Dynamical large deviations}

Fix $T>0$. Let $\mathcal{M}^0$ be the subset of $\mathcal{M}$ of all
absolutely continuous measures with respect to the Lebesgue measure
with positive density bounded by $1$:

\begin{equation*}
\mc{M}^0=\big\{\pi\in\mc{M}:\pi(du)=\rho(u)du \;\; \hbox{ and } \;\;
0\leq \rho(u)\leq 1\;  \hbox{ a.e.} \big\}\, , 
\end{equation*}
and let $D([0,T],\mc M)$ be the set of right continuous with left
limits trajectories $\pi:[0,T]\to\mc M$, endowed with the Skorohod
topology. $\mathcal{M}^0$ is a closed subset of $\mathcal{M}$ and
$D([0,T],\mc M^0)$ is a closed subset of $D([0,T],\mc M)$.

Let $\Omega_T = (0,T)\times\Omega$ and $\overline{\Omega_T} =
[0,T]\times\overline{\Omega}$. For $1\leq m,n\leq +\infty$, denote
by $\mc C^{m,n}(\overline{\Omega_T})$ the space of functions $G =
G_t(u): \overline{\Omega_T}\to \bb R$ with $m$ continuous derivatives
in time and $n$ continuous derivatives in space. We also denote by
$\mc C^{m,n}_0(\overline{\Omega_T})$ (resp. $\mc
C^{\infty}_c(\Omega_T)$) the set of functions in $\mc
C^{m,n}(\overline{\Omega_T})$ (resp. $\mc
C^{\infty,\infty}(\overline{\Omega_T})$) which vanish at
$[0,T]\times\Gamma$ (resp. with compact support in $\Omega_T$).

Let the energy $\mc Q:D([0,T],\mc M^0)\to[0,\infty]$ be given by
\begin{eqnarray*}
\mc Q(\pi) = \sum_{i=1}^d \sup_{G\in\mc C^{\infty}_c(\Omega_T)}
\Big\{ 2 \int_0^T dt\; \langle \rho_t,\partial_{u_i}G_t\rangle 
- \int_0^Tdt\int_{\Omega} G(t,u)^2\, du \Big\}\, . 
\end{eqnarray*}

For each $G\in\mc C_0^{1,2}(\overline{\Omega_T})$ and each measurable
function $\gamma:\overline\Omega\to[0,1]$, let $\hat J_G = \hat
J_{G,\gamma,T}:D([0,T],\mc M^0)\to\bb R$ be the functional given by
\begin{eqnarray*}
\hat J_{G}(\pi) & = & \langle\pi_T,G_T\rangle-\langle\gamma,G_0\rangle -
\int_0^T \langle \pi_t,\partial_tG_t\rangle \, dt \\
&-& \int_0^T \langle\varphi(\rho_t),\Delta G_t\rangle \, dt
\;+\; \int_0^T dt
\int_{\Gamma^+}\varphi(b) \, \partial_{u_1}G \, dS \\
&-& \int_0^T dt \int_{\Gamma^-} \varphi(b) \, \partial_{u_1}G \, dS \;-\;
\frac{1}{2}\int_0^T \langle\sigma(\rho_t),\Vert\nabla
G_t\Vert^2\rangle \, dt \; , 
\end{eqnarray*}
where $\sigma(r)=2r(1-r)(1+2ar)$ is the mobility and $\pi_t(du) =
\rho_t(u) du$. Define $J_G = J_{G,\gamma,T}:D([0,T],\mc M)\to\bb R$ by
\begin{equation*}
J_G (\pi) =
\begin{cases}
\displaystyle \hat J_G (\pi) & \hbox{ if }  \pi \in D([0,T],\mc M^0) ,\\ 
+\infty & \hbox{ otherwise .}
\end{cases}
\end{equation*}

We define the rate functional
$I_T(\cdot|\gamma):D([0,T],\cm)\to[0,+\infty]$ as
\begin{equation*}
I_T(\pi|\gamma) =
\begin{cases}
\displaystyle \sup_{G\in\cc^{1,2}_0(\overline{\Omega_T})}
\!\big\{J_G(\pi)\big\} & \hbox{ if } \mc Q(\pi)<\infty\, ,\\ 
+\infty & \hbox{ otherwise .}
\end{cases}
\end{equation*}

\begin{theorem} 
\label{mt} 
Fix $T>0$ and a measurable function $\rho_0:\Omega\to[0,1]$. Consider
a sequence $\eta^N$ of configurations in $X_N$ associated to $\rho_0$
in the sense that:
\begin{equation*}
\lim_{N\to\infty} \<\pi^N (\eta^N) , G\> \; =\;
\int_\Omega G(u) \rho_0(u) \, du 
\end{equation*}
for every continuous function $G:\overline{\Omega}\to\bb R$. Then, the
measure $Q_{\eta^N}=\bb P_{\eta^N}(\pi^N)^{-1}$ on $D([0,T],\cm)$
satisfies a large deviation principle with speed $N^d$ and rate
function $I_T(\cdot|\rho_0)$. Namely, for each closed set $\cc\subset
D([0,T],\cm)$,
\begin{equation*}
\limsup_{N\to\infty}\frac{1}{N^d}\log
Q_{\eta^N}(\cc)\leq - \inf_{\pi\in\cc} I_T(\pi|\rho_0)
\end{equation*}
and for each open set $\mc{O}\subset D([0,T],\cm)$,
\begin{equation*}
\liminf_{N\to\infty}\frac{1}{N^d}\log Q_{\eta^N}(\mc{O})\geq -
\inf_{\pi\in\mc{O}} I_T(\pi|\rho_0)\;.
\end{equation*}
Moreover, the rate function $I_T(\cdot|\rho_0)$ is lower
semicontinuous and has compact level sets.
\end{theorem}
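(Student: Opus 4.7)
\textbf{Proof plan for Theorem \ref{mt}.}

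The plan is to follow the standard Donsker-Varadhan exponential-tilt scheme, with the simplification advertised in the introduction entering only at the density step of the lower bound. I would split the argument into four blocks: exponential tightness, upper bound, lower bound on a dense class of smooth paths, and the density argument extending the lower bound to all trajectories of finite rate.

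For exponential tightness and the upper bound, I would first establish exponential tightness at speed $N^d$ on $D([0,T], \cm)$ via an Aldous-type criterion for the processes $t \mapsto \<\pi^N_t, G\>$, applied to the exponential martingales of $\cl_N$ for $G$ in a countable dense family. Then, for each $G \in \cc^{1,2}_0(\ov{\Omega_T})$, I would form the exponential martingale $M^{N,G}_t = \exp\{N^d[\<\pi^N_t,G_t\> - \<\pi^N_0,G_0\>] - \int_0^t e^{-N^d\<\pi^N_s,G_s\>}(\partial_s + N^2\cl_N)e^{N^d\<\pi^N_s,G_s\>} ds\}$. A Taylor expansion of the generator, combined with a super-exponential replacement lemma substituting the local functions appearing in the bulk rates by their block averages on mesoscopic scale $\e N$ and using the smoothness of the extension $\beta$ on $V$ for the boundary contributions, should yield $N^{-d}\log M^{N,G}_T = J_G(\pi^N) + o_N(1)$. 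Optimizing over $G$ and controlling $\mc Q(\pi) \le C(I_T(\pi|\rho_0)+1)$ by testing $J_G$ against $G = \a G_0$ with $G_0 \in \cc^\infty_c(\Omega_T)$ then gives the upper bound at level $I_T(\cdot|\rho_0)$ on compact sets.

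For the lower bound on a dense class of paths, I would fix $H \in \cc^{1,2}_0(\ov{\Omega_T})$, tilt $\bb P_{\eta^N}$ by $M^{N,H}_T$, and, reusing the hydrodynamic machinery from Section \ref{sec3}, show that under the tilted law the empirical measure converges to the unique weak solution $\pi^H$ of the perturbed hydrodynamic equation \eqref{int1} with forcing $H$. A standard entropy argument then yields
\begin{equation*}
\liminf_N \frac{1}{N^d}\log Q_{\eta^N}(\mc O)
\;\ge\; -I_T(\pi^H|\rho_0) \;=\; -\frac{1}{2} \int_0^T \<\sigma(\rho^H_t), \|\nabla H_t\|^2\> \, dt
\end{equation*}
for every open neighborhood $\mc O$ of $\pi^H$. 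This establishes the lower bound on the class $\cs_0 = \{\pi^H : H \in \cc^{1,2}_0(\ov{\Omega_T})\}$ of smooth hydrodynamic paths.

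The hard part will be the density argument, which is where the paper's new idea enters. To extend the lower bound to arbitrary $\pi$ with $I_T(\pi|\rho_0) < \infty$, I would first prove the structural lemma that such a $\pi$ has a density $\rho \in L^2((0,T), H^1(\Omega))$ with trace $b$ on $\Gamma$, and that Riesz representation on the Hilbert completion of $\cc^{1,2}_0(\ov{\Omega_T})$ in the weighted inner product $\<G, G'\>_\rho = \int_0^T \<\sigma(\rho_t), \nabla G_t \cdot \nabla G'_t\> dt$ yields a unique $H$ satisfying \eqref{int1} weakly, with $I_T(\pi|\rho_0) = \frac{1}{2}\|H\|_\rho^2$. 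Rather than regularizing $\pi$ directly as in earlier literature, I would then approximate $H$ by $H^n \in \cc^{1,2}_0(\ov{\Omega_T})$ in the $\<\cdot,\cdot\>_\rho$-norm, define $\pi^n \in \cs_0$ as the smooth solution of \eqref{int1} with forcing $H^n$, and show via PDE stability that $\pi^n \to \pi$ in $D([0,T], \cm)$ and $I_T(\pi^n|\rho_0) \to I_T(\pi|\rho_0)$, yielding the recovery sequence \eqref{int2}. The main obstacle is precisely this stability estimate: showing that \eqref{int1} admits unique solutions for forcings in the $\<\cdot,\cdot\>_\rho$-completion and that the solution map is continuous in the Skorohod topology, complicated by the possible degeneracy of $\sigma(\rho)$ at $\rho \in \{0,1\}$. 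Once this is done, lower semicontinuity of $I_T$ is immediate from its variational definition, and compact level sets follow from exponential tightness combined with goodness of $I_T$, completing the proof.
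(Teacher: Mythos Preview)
Your overall architecture matches the paper's: exponential martingales and a superexponential replacement for the upper bound, tilting plus hydrodynamics of the perturbed equation for the lower bound on smooth paths, and the ``approximate $H$ rather than $\pi$'' idea for density. But there are two genuine gaps.

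\textbf{The density step is incomplete.} You correctly flag the degeneracy of $\sigma(\rho)$ at $\rho\in\{0,1\}$ as the main obstacle, but you do not say how to get past it, and your scheme as written will fail there. For a general $\pi$ with $I_T(\pi|\rho_0)<\infty$, the associated $H$ lives only in the weighted space $H^1_0(\sigma(\rho))$; when $\rho$ touches $0$ or $1$ on a set of positive measure, this does \emph{not} give $\int_0^T\!\int_\Omega\|\nabla H\|^2<\infty$, and without that bound you have neither uniqueness of weak solutions of the perturbed equation (the paper's Lemma~\ref{uniqH} needs it) nor the $L^2$ convergence $\nabla H^n\to\nabla H$ required to identify the limit of $\pi^n$. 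The paper resolves this by two preliminary density reductions \emph{before} touching $H$: first replace $\pi$ by a path that follows the unperturbed hydrodynamics on $[0,\delta]$ (Lemma~5.1), then take a convex combination with the hydrodynamic path to force $\epsilon\le\rho_t\le 1-\epsilon$ on $[\delta,T]$ (Lemma~5.2). Only on this class $\Pi_2$ does $H\in H^1_0(\sigma(\rho))$ imply the unweighted bound \eqref{fin}, after which the $H^n\to H$ approximation goes through. The convergence $\pi^n\to\pi$ is then not obtained by a direct PDE stability estimate as you suggest, but by compactness: the uniform bound on $I_T(\pi^n|\rho_0)$ and the compactness of level sets force subsequential limits, which are identified with $\pi$ via uniqueness of \eqref{f05}.

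\textbf{Lower semicontinuity and compact level sets are not free.} You claim lower semicontinuity is ``immediate from its variational definition'', but $\hat J_G$ contains the nonlinear terms $\langle\varphi(\rho_t),\Delta G_t\rangle$ and $\langle\sigma(\rho_t),\|\nabla G_t\|^2\rangle$, which are \emph{not} continuous for the weak (Skorohod) topology on $D([0,T],\mc M)$. The paper needs an Aubin--Lions argument (Lemma~\ref{lem02}) upgrading weak $L^2(\Omega_T)$ convergence to strong convergence along sequences in a fixed level set, and this in turn rests on the a~priori estimates $\int_0^T\|\partial_t\rho_t\|_{-1}^2\,dt$ and $\int_0^T\!\int_\Omega\|\nabla\rho\|^2/\chi(\rho)$ bounded by $C(I_T(\pi|\rho_0)+1)$ (Lemma~\ref{lem03}). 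Likewise, compact level sets do not follow from exponential tightness of $Q_{\eta^N}$; that is a statement about the measures, not about $I_T$. The paper proves compactness of $E_q$ directly, via an equicontinuity estimate \eqref{cont} on $t\mapsto\langle\pi_t,g\rangle$ derived by testing $J_G$ against time-localized functions. These analytic ingredients are what make the density argument above work (they supply the compactness), so they cannot be deferred.
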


\section{Hydrodynamics, Hydrostatics and Fick's law}
\label{sec3}

\renewcommand{\labelenumi}{({\bf H\theenumi})}

We prove in this section Theorem \ref{th1}. The idea is to couple
three copies of the process, the first one starting from the
configuration with all sites empty, the second one starting from the
stationary state and the third one from the configuration with all
sites occupied. The hydrodynamic limit states that the empirical
measure of the first and third copies converge to the solution of the
initial boundary value problem \eqref{f02} with initial condition
equal to $0$ and $1$. Denote these solutions by $\rho^0_t$,
$\rho^1_t$, respectively. In turn, the empirical measure of the second
copy converges to the solution of the same boundary value problem,
denoted by $\rho_t$, with an unknown initial condition.  Since all
solutions are bounded below by $\rho^0$ and bounded above by $\rho^1$,
and since $\rho^j$ converges to a profile $\bar\rho$ as
$t\uparrow\infty$, $\rho_t$ also converges to this profile. However,
since the second copy starts from the stationary state, the
distribution of its empirical measure is independent of time. Hence,
as $\rho_t$ converges to $\bar\rho$, $\rho_0 = \bar\rho$. As we shall
see in the proof, this argument does not require attractiveness of the
underlying interacting particle system.  This approach has been
followed in \cite{MO} to prove hydrostatics for interacting particles
systems with Kac interaction and random potential.

We first describe the hydrodynamic behavior. For a Banach space $(\bb
B,\Vert\cdot\Vert_{\bb B})$ and $T>0$ we denote by $L^2([0,T],\bb B)$
the Banach space of measurable functions $U:[0,T]\to\bb B$ for which
\begin{equation*}
\Vert U\Vert^2_{L^2([0,T],\bb B)} \;=\; 
\int_0^T\Vert U_t\Vert_{\bb B}^2\, dt \;<\; \infty
\end{equation*}
holds. 

Fix $T>0$ and a profile $\rho_0\colon \overline{\Omega} \to [0,1]$. A
measurable function $\rho : [0,T]\times\overline{\Omega} \to [0,1]$ is
said to be a weak solution of the initial boundary value problem
\begin{equation}
\label{f02}
\left\{ 
\begin{array}{l}
\partial_t \rho = \Delta \varphi\big(\rho\big)\, , \\ 
\rho (0 ,\cdot) =\; \rho_0 (\cdot) \, ,   \\
\rho (t, \cdot){\big\vert_\Gamma} =\; b (\cdot) \quad 
\text {for } 0\le t\le T \;,
\end{array}
\right. 
\end{equation}
in the layer $[0,T]\times \Omega$ if

\begin{enumerate}
\item $\rho$ belongs to $L^2 \left( [0,T] , H^1(\Omega)\right)$:
\begin{equation*}
\int_0^T d s \, \Big( \int_\Omega {\parallel\nabla \rho(s,u)\parallel}^2 
du \Big)<\infty \; ;
\end{equation*}

\item For every function $G=G_t(u)$ in
  ${\cc}^{1,2}_0(\overline{\Omega_T})$,
\begin{align*}
& \int_\Omega du \, \big\{ G_T(u)\rho(T,u)-G_0(u)\rho_0 (u)\big\} -
\int_0^T ds \int_\Omega d u \,  (\partial_s G_s)(u)\rho(s,u) \\
& \quad =\; \int_0^T d s \int_\Omega d u \,
(\Delta G_s)(u) \varphi \big(\rho(s,u)\big) 
\; -\; \int_0^T d s \int_\Gamma \varphi(b(u)) {\text{\bf n}}_1 (u) 
(\partial_{u_1} G_s (u)) \text{d} \text{S}  \; .
\end{align*}
\end{enumerate}

We prove in Section \ref{sec5} existence and uniqueness of weak
solutions of \eqref{f02}.  \smallskip

For a measure $\mu$ on $X_N$, denote by $\Pb_\mu = \Pb_\mu^N$ the
probability measure on the path space $D(\R_+, X_N)$ corresponding to
the Markov process $\{\eta_t : t\ge 0\}$ with generator $N^2\cl_N$
starting from $\mu$, and by $\Es_{\mu}$ expectation with respect to
$\Pb_\mu$. Recall the definition of the empirical measure $\pi^N$ and
let $\pi^N_t = \pi^N(\eta_t)$:
\begin{equation*} 
\pi^N_t \, =\, N^{-d}\sum_{x\in\Omega_N}\eta_t(x)\, \delta_{x/N}\; .
\end{equation*}

\begin{theorem}
\label{th3} 
Fix a profile $\rho_0:\Omega\to (0,1)$.  Let $\mu^N$ be a sequence of
measures on $X_N$ associated to $\rho_0$ in the sense that~:
\begin{equation}
\label{f03}
\lim_{N\to\infty} \mu^N \Big\{ \, \Big\vert \<\pi^N, G\> 
-  \int_\Omega G(u) \rho_0(u) \, du \Big\vert >\delta
\Big\}\; =\; 0 \; ,
\end{equation}
for every continuous function $G:\Omega\to\bb R$ and every $\delta
>0$. Then, for every $t>0$,
\begin{equation*}
\lim_{N\to\infty} \Pb_\mu^N \Big\{ \, \Big\vert \<\pi^N_t, G\> 
 -  \int_\Omega G(u) \rho(t,u) \, du \Big\vert 
>\delta \Big\}\; =\; 0 \; ,
\end{equation*}
where $\rho(t,u)$ is the unique weak solution of \eqref{f02}.
\end{theorem}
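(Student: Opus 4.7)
The plan is the standard three-step hydrodynamic program: prove tightness of the laws $Q^N := \Pb^N_{\mu^N}\circ(\pi^N)^{-1}$ in $D([0,T],\cm)$; characterize every limit point as concentrated on weak solutions of \eqref{f02}; and invoke the uniqueness result announced in Section \ref{sec5}.

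For tightness I would apply Dynkin's formula. For $G\in\cc^2(\overline\Omega)$, the process
\[
M^{N,G}_t \;=\; \<\pi^N_t,G\>\,-\,\<\pi^N_0,G\>\,-\,\int_0^t N^2 \cl_N\<\pi^N_s,G\>\,ds
\]
is a martingale whose quadratic variation is of order $N^{-d}$, so $M^{N,G}\to 0$ in $L^2$. The gradient decomposition $W_{x,x+e_i}=h_{i,x}(\n)-h_{i,x+e_i}(\n)+g_{i,x}(\n)-g_{i,x+2e_i}(\n)$ allows two discrete summations by parts that turn $N^2\cl_{N,0}\<\pi^N,G\>$ into a discrete Laplacian of $G$ paired with $\tau_x h_i(\n)$, uniformly bounded in $N$. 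Combined with the vanishing of $M^{N,G}$, Aldous' criterion gives tightness.

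To identify the limit, let $Q^\star$ be any limit point; since $\n(x)\in\{0,1\}$, it is supported on absolutely continuous trajectories $\pi_t(du)=\r(t,u)du$ with $0\le\r\le 1$. Fix $G\in\cc^{1,2}_0(\overline{\Omega_T})$. The standard one- and two-blocks replacement estimates, considerably simpler here because of the gradient structure (cf.\ Remark \ref{s01}), allow me to replace $\tau_x h_i(\n_s)$ by $\vp(\r^{\ell}(s,x/N))$ inside the integrated martingale identity; these rest on the entropy bound $H(\mu^N|\nu^N_\b)=O(N^d)$ (valid since $\b$ is bounded away from $0$ and $1$) and on the corresponding Dirichlet form estimate. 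The boundary-layer terms produced by the two summations by parts combine with $N^2\cl_{N,b}\<\pi^N,G\>$ and, via an analogous replacement exploiting that the flipping dynamics equilibrates the density to $b$ in a microscopic neighbourhood of $\G_N$, yield in the limit the surface integral $-\int_0^T dt\int_\G \vp(b)\,\mathbf{n}_1\,\partial_{u_1}G_t\,d\text S$. Together with hypothesis \eqref{f03} fixing the initial profile, this shows that $Q^\star$-almost every $\r$ satisfies condition (ii) of the weak solution definition.

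The main obstacle I anticipate is the energy estimate placing $\r$ in $L^2([0,T],H^1(\Omega))$, together with the boundary-layer computation ensuring that $\vp(b)$ and not $\vp(\r)$ appears in the surface integral above. For the $H^1$-bound I would use a standard Feynman--Kac exponential estimate producing, uniformly in $N$,
\[
\Es_{\mu^N}\Big[\sup_{G\in\cc^\infty_c(\Omega_T)}\Big\{2\!\int_0^T\!\<\pi^N_t,\partial_{u_i}G_t\>dt \,-\! \int_0^T\!\!\int_\Omega G_t^2\,du\,dt\Big\}\Big] \;\le\; C,
\]
which passes to the limit by lower semicontinuity, implicitly pinning the trace of $\r$ on $\G$ to $b$. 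Once these facts are in hand, the uniqueness theorem of Section \ref{sec5} reduces $Q^\star$ to a Dirac mass at the unique weak solution of \eqref{f02}, whence the convergence in probability claimed in Theorem \ref{th3}.
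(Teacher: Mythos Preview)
The paper does not actually prove Theorem~\ref{th3}: immediately after the statement it simply says ``The proof of this result can be found in \cite{ELS2}.'' So there is no in-paper argument to compare against.

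Your outline is the standard hydrodynamic-limit program (tightness via Aldous and martingale control, characterization of limit points through the gradient replacement lemma and boundary-layer analysis, then uniqueness), and it is exactly the route taken in \cite{ELS2} and in the analogous Propositions~\ref{propo2}--\ref{prpo3} that the paper states (without proof, referring to \cite{LMS} and \cite{KLO}) for the stationary initial data. In particular your identification of the two delicate points --- the energy estimate placing $\rho$ in $L^2([0,T],H^1(\Omega))$ and the boundary replacement forcing $\varphi(b)$ in the surface term --- is accurate; these are precisely the ingredients the paper invokes by citation in Propositions~\ref{propo1} and~\ref{prpo3}. Nothing in your sketch is wrong or missing; it is simply a fleshed-out version of what the paper delegates to the literature.
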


The proof of this result can be found in \cite{ELS2}.  Denote by
$\Qss^N$ the probability measure on the Skorohod space $D([0,T], \cm)$
induced by the stationary measure $\mss$ and the process
$\{\pi^N(\eta_t): 0\le t\le T\}$. Note that, in contrast with the
usual set-up of hydrodynamics, we do not know that the empirical
measure at time $0$ converges. We can not prove, in particular, that
the sequence $\Qss^N$ converges, but only that this sequence is tight
and that all limit points are concentrated on weak solution of the
hydrodynamic equation for some unknown initial profile.

We first show that the sequence of probability measures $\{\Qss^N :
N\ge 1\}$ is weakly relatively compact:

\begin{proposition}
\label{propo2}
The sequence $\{\Qss^N,\, N\ge 1\}$ is tight and all its limit points
$\Qss^*$ are concentrated on absolutely continuous paths
$\pi(t,du)=\rho(t,u)du$ whose density $\rho$ is positive and bounded
above by $1$~:
\begin{eqnarray*}
\Qss^*\Big\{ \pi\, :\, \pi(t,du)=\rho(t,u)du\, , \hbox{ for } 0\leq
t\leq T\Big\}=1\; , \\
\Qss^*\Big\{ \pi\, :\, 0\le \rho(t,u)\le 1\, , \hbox{ for } (t,u) \in
\overline{\Omega_T}\Big\}=1\; .  
\end{eqnarray*}
\end{proposition}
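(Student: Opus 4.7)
The proof divides into two independent tasks: prove tightness of $\{\Qss^N\}$ in $D([0,T],\cm)$, and show that every weak limit point is concentrated on trajectories of measures absolutely continuous with respect to Lebesgue with density in $[0,1]$.

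For tightness, I exploit that $\cm$ is compact in the weak topology, since the total mass of its elements is uniformly bounded by $2$. By Mitoma's criterion, tightness of $\{\Qss^N\}$ on $D([0,T],\cm)$ reduces to tightness in $D([0,T],\bb R)$ of the real-valued processes $\{\<\pi^N_t,G\>\}_{t\in[0,T]}$ for each $G$ in a countable dense family of test functions, say in $\cc_c^\infty(\Omega)$. For such $G$, Dynkin's formula yields
$$\<\pi^N_t,G\> \;=\; \<\pi^N_0,G\> \;+\; \int_0^t N^2\cl_N\<\pi^N_s,G\>\,ds \;+\; M^{N,G}_t.$$
Because the bulk dynamics is gradient, two summations by parts convert $N^2\cl_{N,0}\<\pi^N,G\>$ into $N^{-d}\sum_x (\Delta_N G)(x/N)\,h(\tau_x\eta)$ with $h$ a bounded local function, so this drift is uniformly bounded by a constant depending only on $\|G\|_{\cc^2}$. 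The boundary contribution is $N^2\cl_{N,b}\<\pi^N,G\> = N^{2-d}\sum_{x\in\G_N} G(x/N)(b(x/N)-\eta(x))$, which vanishes identically for $N$ large because $G$ is supported away from $\G$. A direct computation of the carré du champ gives $\bE^{\mss}[\<M^{N,G}\>_T] = O(N^{2-d})\to 0$ since $d\geq 2$, so the martingale is negligible in $L^2$. Aldous's criterion then yields tightness.

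For the characterization of limit points, I observe that for every non-negative continuous function $G$ with compact support in $\Omega$ and every $t\in[0,T]$,
$$\<\pi^N_t,G\> \;=\; N^{-d}\sum_{x\in\Omega_N}\eta_t(x)\,G(x/N) \;\leq\; N^{-d}\sum_{x\in\Omega_N}G(x/N) \;\lra\; \int_\Omega G(u)\,du.$$
Passing to a weak subsequential limit, any limit point $\Qss^*$ charges only paths satisfying $\<\pi_t,G\>\leq \int_\Omega G(u)\,du$ for every such $G$ and every $t$ in a countable dense subset of $[0,T]$; by right-continuity of $t\mapsto\pi_t$ in $D([0,T],\cm)$ this extends to all $t\in[0,T]$. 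Since the non-negative functions of compact support are rich enough to determine absolute continuity, this forces $\pi_t(du)=\rho(t,u)\,du$ with $0\leq\rho(t,u)\leq 1$ almost everywhere, which is exactly the claim.

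The main obstacle in this plan is the non-conservative boundary generator: tested against a general continuous function not vanishing near $\G$, the drift $N^2\cl_{N,b}\<\pi^N,G\>$ has apparent size $O(N)$, which would destroy Aldous's condition. The clean way around this is to restrict to test functions of compact support in $\Omega$. This is legitimate because the discrete sites have coordinates $|x_1/N|\leq 1-1/N$, so $\pi^N$ is supported in the open set $\Omega$, and the family $\cc_c^\infty(\Omega)$ is convergence-determining for the limits of such measures.
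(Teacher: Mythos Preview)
The paper omits this proof, citing Proposition~3.2 of \cite{LMS} and noting that the gradient structure makes things simpler. Your outline---Dynkin decomposition, two summations by parts via the gradient property, Aldous's criterion, and the trivial bound $\eta\le 1$ for absolute continuity---is exactly that standard argument. Two points need repair, though.

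First, your carr\'e du champ estimate is too coarse and, as written, false for $d=2$. The jump of $\langle\pi^N,G\rangle$ across a bulk exchange at $\{x,x+e_i\}$ equals $N^{-d}\big(G(x/N)-G((x+e_i)/N)\big)\big(\eta(x+e_i)-\eta(x)\big)$, which is $O(N^{-d-1})$, not $O(N^{-d})$. Squaring, summing over $O(N^d)$ bonds and multiplying by the speed-up $N^2$ gives $\bE[\langle M^{N,G}\rangle_T]=O(N^{-d})$, which does vanish for every $d\ge 1$. Your stated bound $O(N^{2-d})$ equals $O(1)$ when $d=2$, so the sentence ``$\to 0$ since $d\ge 2$'' is wrong as written. (Even $O(1)$ would suffice for Aldous, since the increment over a time window $\theta$ is then $O(\theta)$; but ``negligible in $L^2$'' requires the sharper bound.)

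Second, the claim that $\cc_c^\infty(\Omega)$ is convergence-determining on $\cm$ is not correct without qualification: the weak topology on $\cm$ is generated by $\mc C(\overline{\Omega})$, and a limit measure could in principle charge $\Gamma$, something compactly supported test functions cannot detect. What rescues the argument is a uniform approximation at the level of the \emph{oscillations}: for any $G\in\mc C(\overline{\Omega})$ and any $\epsilon>0$ there exists $G'\in\cc_c^\infty(\Omega)$ with
\[
\sup_{t,s}\big|\langle\pi^N_t-\pi^N_s,\,G-G'\rangle\big|
\;\le\; \frac{1}{N^d}\sum_{x\in\Omega_N}\big|G(x/N)-G'(x/N)\big|
\;\le\;\epsilon
\]
for all $N$ large (take $G'=G$ outside a thin strip around $\Gamma$, which contains $O(\delta N^d)$ sites). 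Hence controlling the modulus of continuity of $\langle\pi^N,G'\rangle$ gives control for $\langle\pi^N,G\rangle$, and Aldous's criterion goes through for every $G\in\mc C(\overline{\Omega})$. This is the missing step behind your last paragraph.

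With these two corrections your proof is complete and coincides with the argument the paper has in mind.
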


The proof of this statement is similar to the one of Proposition 3.2
in \cite{LMS} and is thus omitted. Actually, the proof is even simpler
because the model considered here is gradient.

The next two propositions show that all limit points of the sequence
$\{\Qss^N : N\ge 1\}$ are concentrated on absolutely continuous
measures $\pi(t,du)=\rho(t,u)du$ whose density $\rho$ are weak
solution of \eqref{f02} in the layer $[0,T]\times\Omega$. Denote by
${\ca}_T\subset D\big([0,T], \cm^0\big)$ the set of trajectories
$\{\rho(t,u) du: 0\le t\le T\}$ whose density $\rho$
satisfies condition ({\bf H2}) for some initial profile $\rho_0$.

\begin{proposition}
\label{propo1}
All limit points $\Qss^*$ of the sequence $\{\Qss^N,\, N>1\}$ are
concentrated on paths $\pi(t,du)=\rho(t,u)du$ in ${\ca}_T$~:
\begin{equation*}
\Qss^* \{{\ca}_T\}=1\; .
\end{equation*}
\end{proposition}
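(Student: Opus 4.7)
\medskip
\noindent\textbf{Proof plan.} The strategy is the classical Dynkin--martingale approach, simplified by the gradient structure of the dynamics (cf. Remark \ref{s01}). By Proposition \ref{propo2} every limit point $\Qss^*$ is concentrated on paths of the form $\pi_t(du)=\rho(t,u)\,du$ with $0\le\rho\le 1$, so the task reduces to checking condition (\textbf{H2}) of the weak formulation for some $\rho_0$. It suffices to show that for each fixed $G\in\cc^{1,2}_0(\overline{\Omega_T})$ the functional
\begin{equation*}
\ell_G(\pi) \;=\; \<\pi_T,G_T\> - \<\pi_0,G_0\> - \int_0^T\<\pi_s,\partial_sG_s\>\,ds
- \int_0^T\<\varphi(\rho_s),\Delta G_s\>\,ds + \int_0^T\!\!\int_\Gamma \varphi(b)\,\mathbf{n}_1\,\partial_{u_1}G_s\,dS\,ds
\end{equation*}
vanishes $\Qss^*$-almost surely. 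Taking $\rho_0(\cdot)=\rho(0,\cdot)$ then produces the required initial profile, so we only need to verify $\ell_G\equiv 0$ on the support of $\Qss^*$ for $G$ ranging over a countable dense family.

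\medskip
\noindent\textbf{Martingale identity and summation by parts.} By Dynkin's formula,
\begin{equation*}
M^{N,G}_t \;=\; \<\pi^N_t,G_t\> - \<\pi^N_0,G_0\> - \int_0^t (\partial_s + N^2\cl_N)\<\pi^N_s,G_s\>\,ds
\end{equation*}
is a martingale, and a direct computation shows that its quadratic variation is $O(N^{-d})$, hence $M^{N,G}_t\to 0$ in $L^2(\Pb_{\mss})$. To handle $N^2\cl_N\<\pi^N_s,G_s\>$, write this quantity in terms of the instantaneous currents $W_{x,x+e_i}$ on bulk bonds plus a boundary contribution coming from $\cl_{N,b}$. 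Using the gradient decomposition $W_{x,x+e_i}=(h_{i,x}-h_{i,x+e_i})+(g_{i,x}-g_{i,x+2e_i})$ and summing by parts twice, the bulk term becomes
\begin{equation*}
\frac{1}{N^d}\sum_{i=1}^d\sum_{x} (\Delta_N G_s)(x/N)\,\bigl[h_{i,x}(\eta_s)+2\,g_{i,x}(\eta_s)\bigr] \;+\; R^N_s\;,
\end{equation*}
where $\Delta_N G$ is a discrete Laplacian converging uniformly to $\Delta G$ and $R^N_s$ collects boundary corrections of size $O(1/N)$ per layer of bonds adjacent to $\Gamma_N$. Since the Bernoulli measure with density $\rho$ gives expectation $\rho-a\rho^2+2a\rho^2=\varphi(\rho)$ to $h_{i,x}+2g_{i,x}$, a standard one-block estimate (available in this gradient setting without the tools used for non-gradient models) replaces this local function by $\varphi$ of a block average of $\eta$, which then converges to $\<\varphi(\rho_s),\Delta G_s\>$ under any limit point.

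\medskip
\noindent\textbf{Boundary terms.} Two sources feed the boundary integral in $\ell_G$: the surface contributions produced by the two summations by parts on bonds meeting $\Gamma_N$, and the non-conservative generator $\cl_{N,b}$. On a face $x_1=\pm(N-1)$ the reversibility of $C^b(x,\cdot)$ with respect to the Bernoulli measure of density $b(\check x/N)$ implies that, after using again the gradient form of the bulk current touching the boundary and the explicit expression of $C^b$, the combined boundary terms amount to
\begin{equation*}
\mp\,\frac{1}{N^{d-1}}\sum_{\check x} (\partial_{u_1}G_s)(\pm 1,\check x/N)\,\varphi\bigl(b(\pm 1,\check x/N)\bigr) \;+\; o(1)\;,
\end{equation*}
modulo errors controlled by a boundary replacement lemma, which says that empirical averages of $h+2g$ near $\Gamma_N$ are close to $\varphi(b)$ in $L^1(\Pb_{\mss})$. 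Summing over the two faces and integrating in time produces exactly the boundary integral present in $\ell_G$.

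\medskip
\noindent\textbf{Passage to the limit and main obstacle.} Putting the pieces together, for each $\delta>0$ one obtains $\Pb_{\mss}(|\ell_G(\pi^N)|>\delta)\to 0$, and since $\ell_G$ is continuous on the subset of $D([0,T],\cm^0)$ produced by Proposition \ref{propo2} (the energy bound $\mc Q(\pi)<\infty$ is not needed at this stage; it would be needed only to control $\varphi(\rho)$ through traces, but here $\varphi$ is bounded), the portmanteau theorem forces $\Qss^*(|\ell_G|>\delta)=0$. Intersecting over a countable dense family of test functions yields $\Qss^*(\ca_T)=1$. The main technical obstacle is the boundary replacement lemma: one must prove that the empirical density in a slab of macroscopic width adjacent to $\Gamma_N$ is close to $b$, which is a one-block argument combined with a control of the entropy production coming from the non-conservative generator $\cl_{N,b}$; the absolute positivity and boundedness away from $0$ and $1$ of $\beta$ assumed in Section \ref{sec2} guarantees that this entropy estimate can be closed uniformly in $N$.
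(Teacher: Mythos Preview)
Your approach is the standard martingale route and matches what the paper has in mind: the paper itself gives no details and simply refers to Proposition~3.3 of \cite{LMS}, whose argument is exactly of this type (there for a non-gradient model; here the gradient structure, as you note, makes the bulk computation elementary).

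There is one genuine gap in your write-up. The functional $\ell_G$ as you define it is \emph{not} continuous on $D([0,T],\cm^0)$, because the term $\int_0^T\<\varphi(\rho_s),\Delta G_s\>\,ds$ is a nonlinear function of the density. Weak convergence $\pi^n\to\pi$ in $\cm^0$ only gives $\rho^n\rightharpoonup\rho$ weakly-$*$ in $L^\infty$, and oscillatory sequences show that $\int\varphi(\rho^n)\,H$ need not converge to $\int\varphi(\rho)\,H$. Worse, $\ell_G(\pi^N)$ is not even well-defined, since $\pi^N$ is purely atomic. The correct statement is the one you half-wrote two paragraphs earlier: after the replacement lemma you obtain, for each fixed $\varepsilon>0$, an $\varepsilon$-smoothed functional
\[
\ell_G^\varepsilon(\pi)\;=\;\<\pi_T,G_T\>-\<\pi_0,G_0\>-\int_0^T\<\pi_s,\partial_sG_s\>\,ds
-\int_0^T\!\!\int_\Omega \varphi\!\big(\rho^\varepsilon_s(u)\big)\,\Delta G_s(u)\,du\,ds
+\text{(boundary)},
\]
with $\rho^\varepsilon$ the density of the mollified measure $\pi*\iota_\varepsilon$. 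This functional \emph{is} continuous on $D([0,T],\cm)$, so portmanteau gives $\Qss^*(|\ell_G^\varepsilon|>\delta)=0$, and only then do you let $\varepsilon\downarrow 0$ using Lebesgue differentiation and the a.e.\ bound $0\le\rho\le 1$ from Proposition~\ref{propo2}. You should also say ``replacement lemma'' rather than ``one-block estimate'': passing from $\tau_x\Psi$ to $\widetilde\Psi(\eta^{\varepsilon N}(x))$ requires both the one-block and two-block estimates; the gradient structure spares you the fluctuation--dissipation machinery of \cite{LMS}, but not the two-blocks step.
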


The proof of this proposition is similar to the one of Proposition 3.3
in \cite{LMS}. Next result states that every limit point $\Qss^*$ of
the sequence $\{\Qss^N,\, N>1\}$ is concentrated on paths whose
density $\rho$ belongs to $L^2([0,T],H^1 (\Omega))$~:

\begin{proposition}
\label{prpo3} 
Let $\Qss^*$ be a limit point of the sequence $\{\Qss^N,\, 
N>1\}$. Then, 
\begin{equation*} 
E_{\Qss^*} \Big[ \int_0^T ds \Big( \int_\Omega \parallel\nabla
\rho(s,u)\parallel^2 d u \Big)\Big] <\infty\; .  
\end{equation*} 
\end{proposition}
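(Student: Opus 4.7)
\smallskip
\noindent \textbf{Proof proposal.} The plan is to exploit the variational representation of the Sobolev seminorm. For a trajectory $\pi \in D([0,T], \mc M^0)$ with $\pi_t(du) = \rho_t(u)\, du$, standard duality gives
\begin{equation*}
\mc Q(\pi) \;=\; \int_0^T \|\nabla \rho_t\|_2^2\, dt
\end{equation*}
when $\rho \in L^2([0,T], H^1(\Omega))$, and $\mc Q(\pi) = +\infty$ otherwise, so the claim is equivalent to $E_{\Qss^*}[\mc Q(\pi)] < \infty$. Since $\mc Q$ is a sum over $i$ of suprema of countably many lower semicontinuous affine-quadratic functionals of $\pi$, monotone convergence reduces the task to producing a single constant $K_0$, independent of $G$ and $i$, such that
\begin{equation*}
E_{\Qss^*}\Big[\, 2\int_0^T \langle \rho_t,\partial_{u_i} G_t\rangle\, dt \;-\; \int_0^T\!\!\int_\Omega G_t(u)^2\, du\, dt \,\Big] \;\leq\; K_0
\end{equation*}
for every $G \in \mc C_c^\infty(\Omega_T)$ and every $i \in \{1,\dots,d\}$.

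For fixed $G$ the expression inside the expectation is a bounded continuous functional of $\pi$ on $D([0,T], \mc M^0)$, so weak convergence along the subsequence $\Qss^{N_k} \to \Qss^*$ reduces matters to a uniform-in-$N$ bound under $\Qss^N$. A discrete summation by parts,
\begin{equation*}
\langle \pi^N_t, \partial_{u_i} G_t\rangle \;=\; \frac{1}{N^{d-1}} \sum_x G_t(x/N)\big[\eta_t(x-e_i) - \eta_t(x)\big] \;+\; O(1/N),
\end{equation*}
converts the target into a time-integrated additive functional of the configuration process. I would then apply the entropy inequality
\begin{equation*}
\Es_{\mss}[X] \;\leq\; \frac{1}{\theta N^d}\Big\{ H(\mss\,|\,\nu^\ast) \;+\; \log \Es_{\nu^\ast}\big[\exp\bigl(\theta N^d X\bigr)\big] \Big\}
\end{equation*}
with reference measure $\nu^\ast = \nu^N_{\beta(\cdot)}$, the Bernoulli product measure carried by the smooth extension $\beta$ of $b$, and estimate the exponential moment by Feynman-Kac. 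Written variationally as the supremum, over probability densities $f$ with respect to $\nu^\ast$, of $\int X f\, d\nu^\ast$ minus the symmetrized Dirichlet form $N^2 \mathfrak D^{\nu^\ast}(\sqrt f)$ of $\cl_N$, the gradient structure of the bulk rates allows one to rewrite the linear-in-$G$ part of $X$ as a discrete divergence and absorb it into the Dirichlet form via Young's inequality; the residue is controlled by a constant multiple of $\int_0^T\!\int G^2$, which cancels the counterterm once $\theta$ is tuned.

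The main obstacle is the absence of reversibility and the lack of any explicit form for $\mss$, which forces the detour through $\nu^\ast$ and demands the a priori bound $H(\mss\,|\,\nu^\ast) = O(N^d)$. I expect this to follow either from attractiveness, by sandwiching $\mss$ between equilibria of auxiliary boundary-driven systems whose marginals are bounded away from $0$ and $1$, or from a direct entropy-production argument exploiting that $\beta$ matches $b$ on $\Gamma$, so that the boundary contribution to the entropy production is of lower order. Once this estimate is secured, the rest is a careful but routine Dirichlet-form computation whose only essential ingredient is the gradient property of the bulk dynamics, used both in the summation by parts and in the Young-type absorption step.
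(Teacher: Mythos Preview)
Your approach is correct and is essentially the one the paper takes (it defers to Lemma~A.1.1 of \cite{KLO}, which is precisely this Feynman--Kac/variational argument with reference measure $\nu^N_{\beta(\cdot)}$). One correction, though: you have misplaced the difficulty. The bound $H(\mss\,|\,\nu^N_{\beta(\cdot)})\le CN^d$ is a one-line estimate, since $\beta$ is bounded away from $0$ and $1$ and hence $\nu^N_{\beta(\cdot)}(\eta)\ge e^{-C|\Omega_N|}$ for \emph{every} configuration $\eta$; no attractiveness or entropy-production argument is needed. In fact the entropy step can be bypassed altogether: the Feynman--Kac eigenvalue bound is uniform in the starting configuration, so one may simply average over $\mss$.

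The genuine issue created by non-reversibility is the one you pass over: $\nu^N_{\beta(\cdot)}$ is not invariant for $\cl_N$, so $\langle -N^2\cl_N f,f\rangle_{\nu^N_{\beta(\cdot)}}$ has no a~priori sign and the variational principle does not directly produce a usable Dirichlet form. The paper isolates this as a separate lemma (Section~6.1): choosing $\beta$ constant in a neighbourhood of $\Gamma$ makes $\nu^N_{\beta(\cdot)}$ reversible for the boundary piece $\cl_{N,b}$, while a direct computation gives
\[
\langle N^2\cl_{N,0} f,f\rangle_{\nu^N_{\beta(\cdot)}} \;\le\; -\tfrac{N^2}{4}\,D_{N,0}(f) \;+\; C N^d\,\langle f,f\rangle_{\nu^N_{\beta(\cdot)}}\,.
\]
Once this replacement of the Dirichlet form is in hand, your summation-by-parts and Young-inequality absorption go through exactly as you describe.
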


The proof of this proposition is similar to the one of Lemma A.1.1 in
\cite{KLO}. We are now ready to prove the first main result of this
article.

\smallskip
\noindent{\bf Proof of Theorem \ref{th1}.}
Fix a continuous function $G:\overline{\Omega} \to \bb R$. We claim that
\begin{equation*}
\lim_{N\to\infty} E^{\mss} \Big[ \, \big| \< \pi , G \> 
- \< \bro(u) du , G\> \big|\, \Big] \;=\;0\;.
\end{equation*}

Note that the expectations are bounded. Consider a subsequence $N_k$
along which the left hand side converges.  It is enough to prove that
the limit vanishes. Fix $T>0$. Since $\mss$ is stationary, by
definition of $\Qss^{N_k}$,
\begin{equation*}
E^{\mu^{N_k}_{\text{ss}}} \Big[ \, \big| \< \pi , G \> - \< \bro(u) du , G\>
\big|\, \Big] \;=\; \Qss^{N_k} \Big[ \, \big| \< \pi_T, G \>
- \< \bro(u) du , G \> \big| \, \Big]\;.
\end{equation*}
Let $\Qss^*$ stand for a limit point of $\{\Qss^{N_k} : k\ge
1\}$. Since the expression inside the expectation is bounded, by
Proposition \ref{propo1},
\begin{eqnarray*}
\lim_{k\to\infty} \Qss^{N_k} \Big[ \big| \< \pi_T , G \>
- \< \bro(u) du , G \> \big| \, \Big]
&=&  \Qss^* \Big[ \big| \< \pi_T , G \>
- \< \bro(u) du , G \> \big| \, \mb 1 \{{\ca}_T\} \Big] \\
&\le& \| G\|_\infty \Qss^* \Big[ \big\| \rho(T,\cdot) -\bro(\cdot) 
\big\|_1 \, \mb 1 \{{\ca}_T\}\Big]\;,
\end{eqnarray*}
where $\Vert\cdot\Vert_1$ stands for the $L^1(\Omega)$ norm.
Denote by $\rho^0(\cdot,\cdot)$ (resp.  $\rho^1(\cdot,\cdot)$) the
weak solution of the boundary value problem \eqref{f02} with initial
condition $\rho(0,\cdot)\equiv 0$ (resp. $\rho(0,\cdot)\equiv 1$). By
Lemma \ref{lembis-ann}, each profile $\rho$ in ${\ca}_T$, including
the stationary profile $\bro$, is bounded below by $\rho^0$ and above
by $\rho^1$.  Therefore
\begin{equation*}
\limsup_{k\to \infty} E^{\mu^{N_k}_{\text{ss}}} \Big[ \, \big| \< \pi , G \> 
- \< \bro(u) du , G\> \big|\, \Big] \;\le\; \| G\|_\infty \; 
\big\| \rho^0(T,\cdot) -\rho^1(T,\cdot) \big\|_1\; .
\end{equation*}
Note that the left hand side does not depend on $T$.  To conclude
the proof it remains to let $T\uparrow \infty$ and to apply
Lemma \ref{s02}.  \cqfd
\smallskip

Fick's law, announced in Theorem \ref{th2}, follows from the
hydrostatics and elementary computations presented in the Proof of
Theorem 2.2 in \cite{KLO}. The arguments here are even simpler and
explicit since the process is gradient.


\section{The rate function $I_T(\cdot | \gamma)$}
\label{sc}


We examine in this section the rate function $I_T(\cdot|\gamma)$. The
main result, presented in Theorem \ref{th4} below, states that
$I_T(\cdot | \gamma)$ has compact level sets. The proof relies on two
ingredients. The first one, stated in Lemma \ref{lem03}, is an
estimate of the energy and of the $H_{-1}$ norm of the time derivative
of a trajectory in terms of the rate function. The second one, stated
in Lemma \ref{lem02}, establishes that sequences of trajectories, with
rate function uniformly bounded, which converges weakly in $L^2$
converge in fact strongly.

We start by introducing some Sobolev spaces. Recall that we denote by
${\mathcal C}_{c}^\infty (\Omega)$ the set of infinitely
differentiable functions $G:\Omega \to \R$, with compact support in
$\Omega$. Recall from subsection $2.1$ the definition of the Sobolev
space $H^1(\Omega)$ and of the norm $\Vert\cdot\Vert_{1,2}$. Denote by
$H^1_0(\Omega)$ the closure of $C_c^{\infty}(\Omega)$ in
$H^1(\Omega)$. Since $\Omega$ is bounded, by Poincar\'e's inequality,
there exists a finite constant $C_1$ such that for all $G\in
H^1_0(\Omega)$
\begin{equation*}
\|G\|^2_2 \;\le\;  C_1 \| \partial_{u_1}G\|^2_2
\;\le\;  C_1 \sum_{j=1}^d \<\partial_{u_j} G \, , \, 
\partial_{u_j} G \>_2 \; .
\end{equation*}
This implies that, in $H^1_0 (\Omega)$
\begin{equation*}
\|G\|_{1,2,0} \;=\; \Big\{ \sum_{j=1}^d
\<\partial_{u_j} G \, , \, \partial_{u_j} G \>_2  \Big\}^{1/2}
\end{equation*}
is a norm equivalent to the norm $\|\cdot \|_{1,2}$.  Moreover, $H^1_0
(\Omega)$ is a Hilbert space with inner product given by
\begin{equation*}
\< G \, , \, J \>_{1,2,0}
\;=\; \sum_{j=1}^d
\<\partial_{u_j} G \, , \, \partial_{u_j} J \>_2 \; .
\end{equation*}

To assign boundary values along the boundary $\Gamma$ of $\Omega$ to
any function $G$ in $H^1(\Omega)$, recall, from the trace Theorem
(\cite{z}, Theorem 21.A.(e)), that there exists a continuous linear
operator $B:H^1(\Omega)\to L^2(\Gamma)$, called trace, such that $BG =
G\big|_{\Gamma}$ if $G\in H^1(\Omega)\cap \mc C(\overline{\Omega})$.
Moreover, the space $H^1_0(\Omega)$ is the space of functions $G$ in
$H^1(\Omega)$ with zero trace (\cite{z}, Appendix (48b)):
\begin{equation*}
H^1_0(\Omega) = \left\{G\in H^1(\Omega):\; BG = 0\right\}\,.
\end{equation*}

Since $\mc C^{\infty}(\overline{\Omega})$ is dense in $H^1(\Omega)$
(\cite{z}, Corollary 21.15.(a)), for functions $F,G$ in $H^1(\Omega)$,
the product $FG$ has generalized derivatives $\partial_{u_i} (FG) =
F\partial_{u_i} G+ G\partial_{u_i} F$ in $L^1(\Omega)$ and
\begin{equation}
\label{ibp}
\begin{split}
& \int_{\Omega}F(u)\, \partial_{u_1}G(u)\, du \; +\; 
\int_{\Omega}G(u) \, \partial_{u_1}F(u) \, du \\
& \quad =\;  \int_{\Gamma_+}
BF(u)\, BG(u)\, du \;-\; \int_{\Gamma_-} BF(u)\, BG(u)\, du\, .
\end{split}
\end{equation}
Moreover, if $G\in H^1(\Omega)$, $f\in\mc C^1(\bb R)$ is such that
$f'$ is bounded, then $f\circ G$ belongs to $H^1(\Omega)$ with
generalized derivatives $\partial_{u_i}(f\circ G) = (f'\circ
G)\partial_{u_i}G$ and trace $B(f\circ G) = f\circ(BG)$.

Finally, denote by $H^{-1}(\Omega)$ the dual of $H^1_0(\Omega)$.
$H^{-1}(\Omega)$ is a Banach space with norm $\Vert\cdot\Vert_{-1}$
given by
\begin{equation*}
\Vert v\Vert^2_{-1} = \sup_{G\in\mc C^{\infty}_c(\Omega)}
\left\{2\langle v,G\rangle_{-1,1} -
\int_{\Omega} \Vert \nabla G(u)\Vert^2du \right\}\, , 
\end{equation*}
where $\langle v,G\rangle_{-1,1}$ stands for the values of the linear
form $v$ at $G$.

For each $G\in\mc C^{\infty}_c(\Omega_T)$ and each integer $1\leq
i\leq d$, let $\mc Q_i^G:D([0,T],\mc M^0)\to\bb R$ be the functional
given by 
\begin{eqnarray*}
\mc Q_i^G(\pi) = 2\int_0^Tdt\;\langle\pi_t,\partial_{u_i}G_t\rangle -
\int_0^Tdt\int_{\Omega} du\; G(t,u)^2\, , 
\end{eqnarray*}
and recall, from subsection 2.2, that the energy $\mc Q(\pi)$ was
defined as
\begin{equation*}
\mc Q(\pi) = \sum_{i=1}^d\mc Q_i(\pi)\;\; \hbox{ with } \;\;\mc
Q_i(\pi) = \sup_{G\in\mc C_c^{\infty}(\Omega_T)}{\mc Q_i^G(\pi)}\, . 
\end{equation*}

The functional $\mc Q^G_i$ is convex and continuous in the Skorohod
topology. Therefore $\mc Q_i$ and $\mc Q$ are convex and lower
semicontinuous. Furthermore, it is well known that a measure
$\pi(t,du) = \rho(t,u) du$ in $D([0,T], \mc M)$ has finite energy,
$\mc Q(\pi) < \infty$, if and only if its density $\rho$ belongs to
$L^2([0,T] , H^1(\Omega))$, in which case
\begin{equation*}
\hat{\mc Q}(\pi) \;:=\;
\int_0^Tdt\int_{\Omega}du\;\Vert\nabla\rho_t(u)\Vert^2 \;<\; \infty
\end{equation*}
and $\mc Q(\pi) = \hat{\mc Q}(\pi)$.

Let $D_{\gamma} = D_{\gamma,b}$ be the subset of $C([0,T],\mc M^0)$
consisting of all paths $\pi(t,du) = \rho(t,u) du$ with initial
profile $\rho(0,\cdot) = \gamma(\cdot)$, finite energy $\mc Q(\pi)$ (in
which case $\rho_t$ belongs to $H^1(\Omega)$ for almost all $0\leq
t\leq T$ and so $B(\rho_t)$ is well defined for those $t$) and such
that $B(\rho_t) = b$ for almost all $t$ in $[0,T]$.

\begin{lemma}
\label{lem01}
Let $\pi$ be a trajectory in $D([0,T],\mc M)$ such that
$I_T(\pi|\gamma)<\infty$. Then $\pi$ belongs to $D_{\gamma}$.
\end{lemma}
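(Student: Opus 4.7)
The strategy is to exploit the quadratic structure of $J_G$ in $G$: writing $J_{\lambda G}(\pi) = \lambda L(G,\pi) - \tfrac{\lambda^2}{2} M(G,\pi)$, where $L(G,\pi)$ gathers the linear-in-$G$ contributions of $\hat J_G(\pi)$ and $M(G,\pi) = \int_0^T \langle \sigma(\rho_t), \|\nabla G_t\|^2 \rangle\, dt$ is the mobility term, and optimising over $\lambda \in \bb R$ to obtain the fundamental bound
\begin{equation*}
L(G,\pi)^2 \;\le\; 2\, I_T(\pi|\gamma)\, M(G,\pi), \qquad \forall\, G \in \mc C^{1,2}_0(\overline{\Omega_T}).
\end{equation*}
Finiteness of $I_T$ forces $\pi \in D([0,T], \mc M^0)$ (otherwise $J_G \equiv +\infty$) and $\mc Q(\pi) < \infty$ by the definition of $I_T$, so $\pi(t,du) = \rho(t,u)\,du$ with $0 \le \rho \le 1$ and $\rho \in L^2([0,T], H^1(\Omega))$; in particular $B\rho_t$ is well defined for almost every $t$. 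It remains to verify continuity of $\pi$, $\rho(0,\cdot) = \gamma$, and $B\rho_t = b$.

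For continuity in time I test with $G(r,u) = \phi(r) H(u)$, where $H \in \mc C^\infty_c(\Omega)$ and $\phi \in \mc C^\infty([0,T])$ is a smooth approximation of $\mathbf{1}_{[s,t]}$. The compact spatial support of $H$ annihilates every boundary contribution in $L(G,\pi)$, and a direct expansion gives $L(G,\pi) = \langle \rho_t - \rho_s, H\rangle + O(|t-s|)$ together with $M(G,\pi) = O(|t-s|)$. The fundamental bound then yields $|\langle \rho_t - \rho_s, H\rangle| \le C_H \sqrt{|t-s|}$. Since $\mc C^\infty_c(\Omega)$ is dense and the masses $\pi_r(\Omega)$ are uniformly bounded, this implies $\pi \in C([0,T], \mc M^0)$.

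For the initial datum I take $G(r,u) = \phi_\epsilon(r) H(u)$ with $H \in \mc C^\infty_c(\Omega)$ and $\phi_\epsilon \in \mc C^\infty([0,T])$ monotone decreasing with $\phi_\epsilon(0) = 1$ and $\phi_\epsilon \equiv 0$ on $[\epsilon, T]$. Since $\int_0^\epsilon \phi_\epsilon'(r)\,dr = -1$ and the continuity established above makes $\langle \rho_r, H\rangle$ continuous at $r=0$, one finds $\int_0^\epsilon \phi_\epsilon'(r) \langle \rho_r, H\rangle\, dr = -\langle \rho_0, H\rangle + o(1)$ as $\epsilon \to 0$, while all other contributions to $L(G,\pi)$ are $O(\epsilon)$. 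Hence $L(G,\pi) = \langle \rho_0 - \gamma, H\rangle + o(1)$, and combined with $M(G,\pi) = O(\epsilon)$ the fundamental bound forces $\langle \rho_0 - \gamma, H\rangle = 0$ for every $H \in \mc C^\infty_c(\Omega)$, so $\rho(0,\cdot) = \gamma$.

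The delicate step is the boundary datum. I would introduce a boundary layer $H_\epsilon(u) = h(\check u)\, g_\epsilon(u_1)$, where $h \in \mc C^\infty(\bb T^{d-1})$ and $g_\epsilon \in \mc C^\infty([-1,1])$ is supported in $[1-\epsilon, 1]$ with $g_\epsilon(1)=0$, $g_\epsilon'(1)=-1$, and $\|g_\epsilon\|_\infty + \|g_\epsilon'\|_\infty$ bounded uniformly in $\epsilon$; then $H_\epsilon|_\Gamma = 0$ and $\|\nabla H_\epsilon\|_{L^2(\Omega)}^2 = O(\epsilon)$, so the test function $G = \psi H_\epsilon$ with $\psi \in \mc C^\infty([0,T])$ satisfies $M(G,\pi) = O(\epsilon)$. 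Because $\varphi(\rho_t) \in H^1(\Omega)$ with trace $\varphi(B\rho_t)$, integration by parts via \eqref{ibp} gives
\begin{equation*}
\int_\Omega \varphi(\rho_t)\,\Delta H_\epsilon\,du \;=\; \int_{\Gamma_+} \varphi(B\rho_t)\, \partial_{u_1} H_\epsilon\, dS \;-\; \int_\Omega \nabla\varphi(\rho_t)\cdot\nabla H_\epsilon\, du,
\end{equation*}
and collecting terms in $L(G,\pi)$ yields
\begin{equation*}
L(G,\pi) \;=\; \int_0^T \psi(t)\,dt \int_{\Gamma_+} \big[\varphi(B\rho_t) - \varphi(b)\big]\, h(\check u)\, dS \;+\; O(\sqrt{\epsilon}),
\end{equation*}
the error being controlled by $\|\nabla H_\epsilon\|_{L^2(\Omega)} \cdot \|\nabla\varphi(\rho)\|_{L^2([0,T]\times\Omega)} = O(\sqrt{\epsilon})$ through Cauchy--Schwarz and $\mc Q(\pi)<\infty$. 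The fundamental bound then forces the boundary integral, which is independent of $\epsilon$, to vanish for every admissible $\psi$ and $h$; the strict monotonicity of $\varphi$ on $[0,1]$ (as $\varphi'(r) = 1+2ar > 0$ for $a>-1/2$) gives $B\rho_t = b$ almost everywhere on $[0,T]\times\Gamma_+$, and the symmetric choice of $g_\epsilon$ near $u_1 = -1$ handles $\Gamma_-$. The main obstacle is precisely this boundary step: the mobility must be of order $\epsilon$, one half power sharper than the $O(\sqrt{\epsilon})$ error in the linear contribution, and the bulk correction $\int_\Omega \nabla\varphi(\rho)\cdot\nabla H_\epsilon\,du$ must be absorbed using only the a priori $L^2([0,T],H^1(\Omega))$ energy of $\rho$.
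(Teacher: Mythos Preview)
Your proposal is correct and follows essentially the same approach as the paper: the paper likewise tests $J_G$ with a time-cutoff $\psi^\delta(t)g(u)$ to obtain the H\"older estimate \eqref{cont} for continuity, refers to \cite{BDGJL} for the initial condition (your argument with $\phi_\epsilon$ supported near $t=0$ is the standard one), and for the boundary uses a spatial test function $G_\theta(t,u)=h_\theta(u_1)H(t,(-1,\check u))$ with $\|\nabla G_\theta\|_{L^2}\to 0$, integrates by parts via \eqref{ibp}, and lets $\theta\to 0$ so that $J_{G_\theta}$ reduces to the boundary integral. The only cosmetic differences are that you make the quadratic optimization $L^2\le 2I_T M$ explicit throughout while the paper simply takes $\lim_\theta J_{G_\theta}\le I_T$, and that your $g_\epsilon$ is a genuine thin layer of width $\epsilon$ whereas the paper's $h_\theta$ is piecewise linear on $[-1,0]$ with a kink at $-1+\theta$; both constructions give $M=O(\epsilon)$ (resp.\ $O(\theta)$) and the same $O(\sqrt{\epsilon})$ bulk error absorbed by $\mc Q(\pi)<\infty$.
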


\begin{proof}
Fix a path $\pi$ in $D([0,T],\mc M)$ with finite rate function,
$I_T(\pi|\gamma)<\infty$. By definition of $I_T$, $\pi$ belongs to
$D([0,T],\mc M^0)$. Denote its density by $\rho$: $\pi(t,du) =
\rho(t,u) du$. 

The proof that $\rho(0,\cdot) = \gamma(\cdot)$ is similar to the one
of Lemma 3.5 in \cite{BDGJL}.  To prove that $B(\rho_t) = b$ for
almost all $t\in[0,T]$, since the function $\varphi:[0,1]\to[0,1+a]$
is a $\mc C^1$ diffeomorphism and since $B(\varphi\circ\rho_t) =
\varphi(B\rho_t)$ (for those $t$ such that $\rho_t$ belongs to
$H^1(\Omega)$), it is enough to show that $B(\varphi\circ\rho_t) =
\varphi(b)$ for almost all $t\in[0,T]$. To this end, we just need to
show that, for any function $H_{\pm}\in\mc
C^{1,2}([0,T]\times\Gamma_{\pm})$,
\begin{eqnarray}
\label{trb}
\int_0^T dt\int_{\Gamma_{\pm}}
du\; \big\{ B(\varphi(\rho_t))(u)-\varphi(b(u)) \big\}
\, H_{\pm}(t,u) \;=\; 0\, . 
\end{eqnarray}
Fix a function $H\in\mc C^{1,2}([0,T]\times\Gamma_{-})$. For each
$0<\theta<1$, let $h_{\theta}:[-1,1]\to\bb R$ be the function given by
\begin{equation*}
h_{\theta}(r) = 
\begin{cases}
r+1 & \hbox{ if } -1\leq r\leq-1+\theta\, , \\
\frac{-\theta r}{1-\theta} & \hbox{ if } -1+\theta\leq r\leq 0\, , \\
0 & \hbox{ if } 0\leq r\leq 1\, ,
\end{cases}
\end{equation*}
and define the function $G_{\theta}:\overline{\Omega_T}\to \bb R$ as
$G(t,(u_1,\check{u})) = h_{\theta}(u_1)H(t,(-1,\check{u}))$ for all
$\check{u}\in\bb T^{d-1}$. Of course, $G_{\theta}$ can be approximated
by functions in $\mc C^{1,2}_0(\overline{\Omega_T})$. From the
integration by parts formula \eqref{ibp} and the definition of
$J_{G_{\theta}}$, we obtain that
\begin{equation*}
\lim_{\theta\to 0}J_{G_{\theta}}(\pi) = \int_0^T
dt\int_{\Gamma_-}\!\!\!
du\; \big\{ B(\varphi(\rho_t))(u)-\varphi(b(u)) \big\}\, H(t,u)\, ,
\end{equation*}
which proves \eqref{trb} because $I_T(\pi|\gamma)<\infty$.

We deal now with the continuity of $\pi$. We claim that there exists a
positive constant $C_0$ such that, for any $g\in\mc
C^{\infty}_c(\Omega)$, and any $0 \leq s < r < T$,
\begin{eqnarray}\label{cont}
|\langle\pi_r,g\rangle-\langle\pi_s,g\rangle| \;\leq\;
C_0(r-s)^{1/2}\left\{I_T(\pi|\gamma)+\Vert
  g\Vert^2_{1,2,0}+(r-s)^{1/2}\Vert\Delta g\Vert_1\right\}\, . 
\end{eqnarray}
Indeed, for each $\delta>0$, let $\psi^{\delta}:[0,T]\to\bb R$ be the
function given by
\begin{equation*}
(r-s)^{1/2}\psi^{\delta}(t) = 
\begin{cases}
0 & \hbox{ if } 0\leq t\leq s \;\hbox{ or }\; r+\delta\leq t\leq T\, , \\
\frac{t-s}{\delta} & \hbox{ if } s\leq t\leq s+\delta\, , \\
1 & \hbox{ if } s+\delta\leq t\leq r\, , \\
1-\frac{t-r}{\delta} & \hbox{ if } r\leq t\leq r+\delta\, ,
\end{cases}
\end{equation*}
and let $G^{\delta}(t,u) = \psi^{\delta}(t)g(u)$. Of course,
$G^{\delta}$ can be approximated by functions in $\mc
C^{1,2}_0(\overline{\Omega_T})$ and then
\begin{eqnarray*}
(r-s)^{1/2}\lim_{\delta\to 0}J_{G^{\delta}}(\pi) & = &
\langle\pi_r,g\rangle-\langle\pi_s,g\rangle -
\int_s^rdt\;\langle\varphi(\rho_t),\Delta g\rangle \\
&-& \frac{1}{2(r-s)^{1/2}}\int_s^r dt\;\langle\sigma(\rho_t),
\Vert\nabla g\Vert^2\rangle\, . 
\end{eqnarray*}
To conclude the proof, it remains to observe that the left hand side
is bounded by $(r-s)^{1/2} I_T(\pi|\gamma)$, and to note that
$\varphi$, $\sigma$ are positive and bounded above on $[0,1]$ by some
positive constant.
\end{proof}

Denote by $L^2([0,T],H_0^1(\Omega))^*$ the dual of
$L^2([0,T],H_0^1(\Omega))$.  By Proposition 23.7 in \cite{z},
$L^2([0,T],H_0^1(\Omega))^*$ corresponds to
$L^2([0,T],H^{-1}(\Omega))$ and for $v$ in
$L^2([0,T],H_0^1(\Omega))^*$, $G$ in $L^2([0,T],H_0^1(\Omega))$,
\begin{equation}
\label{fl1}
\<\!\< v,G \>\!\>_{-1,1} \; =\; \int_0^T \<v_t, G_t\>_{-1,1}\, dt \; , 
\end{equation}
where the left hand side stands for the value of the linear functional
$v$ at $G$. Moreover, if we denote by $|\!| \!| v |\!| \!|_{-1}$ the
norm of $v$,
\begin{equation*}
|\!| \!| v |\!| \!|^2_{-1} \;=\; \int_0^T \Vert v_t \Vert^2_{-1} \, dt\;.
\end{equation*}

Fix a path $\pi(t,du)=\rho(t,u)du$ in $D_\gamma$ and suppose that
\begin{equation}
\label{cl1}
\sup_{H\in\mc C^{\infty}_c(\Omega_T)}\Big \{2 \int_0^T
  dt\, \langle\rho_t,\partial_tH_t\rangle - \int_0^T dt\int_{\Omega}
  du\; \Vert\nabla H_t\Vert^2\Big\}\;<\; \infty\;.
\end{equation} 
In this case $\partial_t \rho : C^{\infty}_c(\Omega_T) \to \bb R$
defined by
\begin{equation*}
\partial_t \rho (H) \;=\; - \int_0^T \< \rho_t, \partial_t H_t\>\, dt
\end{equation*}
can be extended to a bounded linear operator $\partial_t \rho :
L^2([0,T],H_0^1(\Omega)) \to \bb R$. It belongs therefore to
$L^2([0,T],H_0^1(\Omega))^* = L^2([0,T],H^{-1}(\Omega))$. In
particular, there exists $v = \{v_t :0\le t\le T\}$ in
$L^2([0,T],H^{-1}(\Omega))$, which we denote by $v_t = \partial_t
\rho_t$, such that for any $H$ in $L^2([0,T],H_0^1(\Omega))$,
\begin{equation*}
\<\!\< \partial_t\rho , H\>\!\>_{-1,1} \;=\;
\int_0^T \langle \partial_t \rho_t , H_t\rangle_{-1,1}\,dt \;.
\end{equation*}
Moreover,
\begin{equation*}
\begin{split}
|\!| \!| \partial_t \rho |\!| \!|^2_{-1} \; &=\;
\int_0^T \, \Vert \partial_t \rho_t \Vert^2_{-1} \, dt\\
&=\; \sup_{H\in\mc C^{\infty}_c(\Omega_T)}\Big \{2 \int_0^T
  dt\, \langle\rho_t,\partial_tH_t\rangle - \int_0^T dt\int_{\Omega}
  du\; \Vert\nabla H_t\Vert^2\Big\} \;.
\end{split}
\end{equation*}

Let $W$ be the set of paths $\pi(t,du)=\rho(t,u)du$ in $D_\gamma$ such
that \eqref{cl1} holds, i.e., such that $\partial_t\rho$ belongs to
$L^2\left([0,T], H^{-1}(\Omega)\right)$.  For $G$ in
$L^2\left([0,T],H^1_0(\Omega)\right)$, let $\bb J_{G}:W\to \bb R$ be
the functional given by
\begin{eqnarray*}
\bb J_{G}(\pi) & = & 
\<\!\< \partial_t\rho , G\>\!\>_{-1,1} + \int_0^T
dt\int_{\Omega} du\; \nabla G_t(u)\cdot\nabla(\varphi(\rho_t(u)))  \\ 
&-& \frac{1}{2}\int_0^T dt\int_{\Omega} du\;
\sigma(\rho_t(u))\, \Vert\nabla G_t(u)\Vert^2\, . 
\end{eqnarray*}
Note that $\bb J_{G}(\pi) = J_G(\pi)$ for every $G$ in $\mc
C^{\infty}_c(\Omega_T)$. Moreover, since $\bb J_{\cdot} (\pi)$ is
continuous in $L^2\left([0,T],H^1_0 (\Omega)\right)$ and since $\mc
C^{\infty}_c(\Omega_T)$ is dense in $\mc C^{1,2}_0
(\overline{\Omega_T})$ and in $L^2 ([0,T]$, $H^1_0(\Omega))$, for
every $\pi$ in $W$,
\begin{eqnarray}
\label{rf1}
I_T(\pi|\gamma) = \sup_{G\in \mc C^{\infty}_c(\Omega_T)} 
\bb J_{G}(\pi) \;=\;
\sup_{G\in L^2\left([0,T],H^1_0\right)} \bb J_{G}(\pi)\, . 
\end{eqnarray}

\begin{lemma}
\label{lem03}
There exists a constant $C_0>0$ such that if the density $\rho$ of
some path $\pi (t,du) = \rho(t,u) du$ in $D([0,T],\mc M^0)$ has a
generalized gradient, $\nabla\rho$, then
\begin{eqnarray}
\label{est1}
\int_0^Tdt\;\left\Vert\partial_t\rho_t\right\Vert_{-1}^2 &\leq&
C_0\left\{I_T(\pi|\gamma)+\mc Q(\pi)\right\}\, , \\
\label{est2}
\int_0^T dt \int_{\Omega}
du\;\frac{\Vert\nabla\rho_t(u)\Vert^2}{\chi(\rho_t(u))} &\leq&
C_0\left\{I_T(\pi|\gamma) +1\right\}\, , 
\end{eqnarray}
where $\chi(r) =r(1-r)$ is the static compressibility.
\end{lemma}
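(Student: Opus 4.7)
The plan is to deduce both estimates from the variational characterisation \eqref{rf1}, $I_T(\pi|\gamma)=\sup_G \bb J_G(\pi)$, by evaluating $\bb J_G$ on carefully chosen families of test functions.

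For \eqref{est1}, I will take $G\in\mc C^{\infty}_c(\Omega_T)$. Since $G$ has compact support and $\rho_t\in H^1(\Omega)$ almost everywhere (because $\mc Q(\pi)<\infty$), an integration by parts converts $-\int\varphi(\rho)\Delta G$ into $\int\nabla\varphi(\rho)\cdot\nabla G$ in the expression for $J_G(\pi)$. Applying $J_G(\pi)\le I_T$ to $\pm G$, controlling $\int\nabla\varphi(\rho)\cdot\nabla G$ by Young's inequality together with $\|\nabla\varphi(\rho)\|_2^2\le\|\varphi'\|_\infty^2\,\mc Q(\pi)$, and using $\sigma\le\|\sigma\|_\infty$, will give
\[
\Big|\int_0^T\langle\rho_t,\partial_tG_t\rangle\,dt\Big|\;\le\; I_T+C_1\mc Q(\pi)+C_2\int_0^T\|\nabla G_t\|_2^2\,dt.
\]
Substituting $\lambda G$ for $G$ and optimising over $\lambda>0$ turns this into the quadratic bound $|\langle\partial_t\rho,G\rangle|^2\le C_3(I_T+\mc Q(\pi))\|\nabla G\|_{L^2}^2$, which is precisely \eqref{est1}.

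For \eqref{est2}, the idea is to feed $\bb J$ the test function $G=k(\rho)-k(\beta)$, where $\beta$ is the smooth extension of $b$ from Section~\ref{sec2} (bounded in $(c,1-c)$ for some $c>0$) and $k'(r)=1/(2\chi(r))$. Formally $k(r)=\tfrac12\log[r/(1-r)]$ is unbounded, but its primitive $K(r)=\tfrac12[r\log r+(1-r)\log(1-r)]$ is the bounded binary entropy. To make the argument rigorous I regularise by $k'_\delta(r)=1/[2(\chi(r)+\delta)]$, which is bounded in $r$ and so yields $G_\delta=k_\delta(\rho)-k_\delta(\beta)\in L^2([0,T],H^1_0(\Omega))$. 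Expanding $\bb J_{G_\delta}(\pi)\le I_T$ and using the identity $\sigma=2\chi\varphi'$, the coefficient of $\|\nabla\rho\|^2$ in the bulk reduces to $\varphi'(\rho)(\chi(\rho)+2\delta)/[4(\chi(\rho)+\delta)^2]$, which is monotone decreasing in $\delta$ and tends to $\varphi'(\rho)/(4\chi(\rho))$ as $\delta\downarrow 0$. The mixed $\nabla\rho\cdot\nabla\beta$ terms pick up a prefactor of order $\delta$ (thanks to $2\chi(\rho)k'_\delta(\rho)-1=-\delta/(\chi(\rho)+\delta)$) and can be absorbed into half of the bulk term via Cauchy--Schwarz; the purely $\beta$-dependent boundary term $\tfrac12\int\sigma(\rho)k'_\delta(\beta)^2\|\nabla\beta\|^2$ is uniformly bounded in $\delta$ because $\chi(\beta)\ge c(1-c)>0$. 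Thanks to \eqref{est1} we know $\partial_t\rho\in L^2([0,T],H^{-1})$, so the chain rule applies and
\[
\langle\!\langle\partial_t\rho,G_\delta\rangle\!\rangle\;=\;\int_\Omega[K_\delta(\rho_T)-K_\delta(\rho_0)]\,du\;-\;\int_\Omega k_\delta(\beta)[\rho_T-\rho_0]\,du,
\]
both terms uniformly bounded in $\delta$: $K_\delta$ is dominated by the bounded function $K$, and $k_\delta(\beta)$ is bounded since $\beta$ stays away from $0$ and $1$. Sending $\delta\downarrow 0$ by monotone convergence, using $\varphi'\ge\min(1,1+2a)>0$ and the fact that $\nabla\rho=0$ a.e.\ on $\{\rho\in\{0,1\}\}$, will give \eqref{est2}.

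The principal obstacle is the second bound. The natural choice $k'=1/(2\chi)$---essentially the only one for which the quadratic form in $\nabla G$ reproduces $\|\nabla\rho\|^2/\chi$---blows up at the endpoints $0$ and $1$, forcing a regularisation and a careful tracking of how the cross terms dissolve. Two algebraic facts are essential: the identity $\sigma=2\chi\varphi'$, which kills the leading cross term up to a $\delta$-small remainder; and the a priori non-obvious fact that the \emph{second} primitive of $1/(2\chi)$ is a bounded function of $r\in[0,1]$, which is what collapses the time-derivative contribution to a universal constant and produces $I_T+1$, rather than $I_T+\mc Q$, on the right hand side of \eqref{est2}.
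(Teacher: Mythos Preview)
Your proposal is correct and follows essentially the same approach as the paper. For \eqref{est1} both you and the paper exploit $J_G(\pi)\le I_T(\pi|\gamma)$ for $G\in\mc C^\infty_c(\Omega_T)$ together with Young's inequality; for \eqref{est2} both plug into $\bb J_G$ the test function $G=k(\rho)-k(\beta)$ with $k'\sim 1/(2\chi)$, regularised to stay in $H^1_0$, and both rely on the identity $\sigma=2\chi\varphi'$ to control the quadratic form and on the boundedness of the antiderivative (the relative entropy) to handle the time-derivative term.

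The only differences are cosmetic. The paper regularises with $\chi_\delta(r)=(r+\delta)(1-r+\delta)$ rather than your $\chi(r)+\delta$; this makes the antiderivative literally the regularised relative entropy $h^\delta(\rho,\beta)$, so its boundedness is immediate rather than deduced from domination by the binary entropy. The paper is also more explicit about justifying the chain rule \eqref{ibp0}: it approximates $\rho-\beta$ by smooth compactly supported functions $\widetilde G^n$ converging simultaneously in $L^2([0,T],H^1_0)$ and (for the time derivative) in $L^2([0,T],H^{-1})$, which is the step you should spell out where you write ``the chain rule applies''.
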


\begin{proof}
Fix a path $\pi(t,du) = \rho(t,u) du$ in $D([0,T],\mc M^0)$.  In view
of the discussion presented before the lemma, we need to show that the
left hand side of \eqref{cl1} is bounded by the right hand side of
\eqref{est1}.  Such an estimate follows from the definition of the
rate function $I_T(\cdot|\gamma)$ and from the elementary inequality
$2ab\leq Aa^2+A^{-1}b^2$.

We turn now to the proof of \eqref{est2}. We may of course assume that
$I_T(\pi|\gamma)<\infty$, in which case $\mc Q(\pi)<\infty$. Fix a
function $\beta$ as in the beginning of Section \ref{sec2}.  For each
$\delta>0$, let $h^{\delta}:[0,1]^2\to\bb R$ be the function given by
\begin{equation*}
h^{\delta}(x,y) = (x+\delta)\log\left(\frac{x+\delta}{y+\delta}\right)
+ (1-x+\delta)\log\left(\frac{1-x+\delta}{1-y+\delta}\right)\, . 
\end{equation*}

By \eqref{est1}, $\partial_t\rho$ belongs to
$L^2([0,T],H^{-1}(\Omega))$. We claim that
\begin{eqnarray}
\label{ibp0}
\int_0^Tdt\;\langle\partial_t\rho_t,\partial_xh^{\delta}
(\rho_t,\beta)\rangle_{-1,1}  
& = & \int_{\Omega}h^{\delta}(\rho_{_T}(u),\beta(u))du\nonumber  \\
& & - \int_{\Omega}h^{\delta}(\rho_0(u),\beta(u))du\, .
\end{eqnarray}

Indeed, By Lemma \ref{lem01} and \eqref{est1}, $\rho-\beta$ belongs to
$L^2\left([0,T],H^1_0(\Omega)\right)$ and $\partial_t(\rho-\beta) =
\partial_t\rho$ belongs to $L^2([0,T],H^{-1}(\Omega))$. Then, there
exists a sequence $\{\widetilde G^n: \,n\geq 1\}$ of smooth functions
$\widetilde G^n:\overline{\Omega_T}\to \bb R$ such that $\widetilde
G^n_t$ belongs to $\mc C^{\infty}_c(\Omega)$ for every $t$ in $[0,T]$,
$\widetilde G^n$ converges to $\rho-\beta$ in
$L^2([0,T],H^1_0(\Omega))$ and $\partial_t \widetilde G^n$ converges
to $\partial_t(\rho-\beta)$ in $L^2([0,T],H^{-1}(\Omega))$ (cf.
\cite{z}, Proposition 23.23(ii)). For each positive integer $n$, let
$G^n = \widetilde G^n+\beta$ and for each $\delta>0$, fix a smooth
function $\tilde h^{\delta} : \bb R^2\to\bb R$ with compact support
and such that its restriction to $[0,1]^2$ is $h^{\delta}$. It is
clear that
\begin{eqnarray}
\label{ibpn}
\int_0^Tdt\;\langle\partial_tG^n_t, \partial_x\tilde
h^{\delta}(G^n_t,\beta)\rangle & = & \int_{\Omega}\tilde
h^{\delta}(G^n_T(u),\beta(u))du\nonumber \\
& & - \int_{\Omega}\tilde h^{\delta}(G^n_0(u),\beta(u))du\, .
\end{eqnarray}

On the one hand, $\partial_xh^{\delta}:[0,1]^2\to\bb R$ is given by
\begin{equation*}
\partial_xh^{\delta}(x,y) =
\log\left(\frac{x+\delta}{1-x+\delta}\right) -
\log\left(\frac{y+\delta}{1-y+\delta}\right)\, . 
\end{equation*}
Hence, $\partial_xh^{\delta}(\rho,\beta)$ and $\partial_x\tilde
h^{\delta}(G^n,\beta)$ belongs to
$L^2\left([0,T],H^1_0(\Omega)\right)$. Moreover, since $\partial_x
\tilde h^{\delta}$ is smooth with compact support and $G^n$ converges
to $\rho$ in $L^2([0,T],H^1(\Omega))$, $\partial_x\tilde
h^{\delta}(G^n,\beta)$ converges to $\partial_xh^{\delta}(\rho,\beta)$
in $L^2([0,T],H^1_0(\Omega))$. From this fact and since
$\partial_tG^n$ converges to $\partial_t\rho$ in
$L^2([0,T],H^{-1}(\Omega))$, if we let $n\to\infty$, the left hand
side in \eqref{ibpn} converges to
\begin{equation*}
\int_0^Tdt\;\langle\partial_t\rho_t,
\partial_xh^{\delta}(\rho_t,\beta)\rangle_{-1,1}\, . 
\end{equation*}

On the other hand, by Proposition 23.23(ii) in \cite{z}, $G^n_0$,
resp. $G^n_T$, converges to $\rho_0$, resp. $\rho_T$, in $L^2(\Omega)$.
Then, if we let $n\to\infty$, the right hand side in \eqref{ibpn} goes
to
\begin{equation*}
\int_{\Omega}h^{\delta}(\rho_{_T}(u),\beta(u))du -
\int_{\Omega}h^{\delta}(\rho_0(u),\beta(u))du\, , 
\end{equation*}
which proves claim \eqref{ibp0}.

Notice that, since $\beta$ is bounded away from 0 and 1, there exists
a positive constant $C=C(\beta)$ such that for $\delta$ small enough,
\begin{eqnarray}\label{bou1}
h^{\delta}(\rho(t,u),\beta(u))\leq C \;\hbox{ for all }\; (t,u)\;\hbox{
  in } \;\overline{\Omega_T}\, . 
\end{eqnarray}

For each $\delta>0$, let $H^{\delta}:\overline{\Omega_T}\to\bb R$ be
the function given by
\begin{equation*}
H^{\delta}(t,u) =
\frac{\partial_xh^{\delta}(\rho(t,u),\beta(u))}{2(1+2\delta)}\, . 
\end{equation*}
A simple computation shows that
\begin{eqnarray*}
\bb J_{H^{\delta}}(\pi) & \geq & \int_0^T
dt\left\langle\partial_t\rho_t,H^{\delta}_t\right\rangle_{-1,1}
+\frac{1}{4}\int_0^T dt \int_{\Omega}du\;
\varphi'(\rho_t(u))\frac{\Vert\nabla\rho_t(u)\Vert^2}{\chi_{\delta}(\rho_t(u))}  
\\ & & - \frac{1}{8}\int_0^T dt
\int_{\Omega}du\;\sigma_{\delta}(\rho_t(u))
\frac{\Vert\nabla\beta(u)\Vert^2}{{\chi_{\delta}(\beta(u))}^2}\,, 
\end{eqnarray*}
where $\chi_{\delta}(r) = (r+\delta)(1-r+\delta)$ and
$\sigma_{\delta}(r)=2\chi_{\delta}(r)\varphi'(r)$. This last
inequality together with \eqref{ibp0}, \eqref{rf1} and \eqref{bou1}
show that there exists a positive constant $C_0=C_0(\beta)$ such that
for $\delta$ small enough
\begin{equation*}
C_0 \left\{I_T(\pi|\gamma) + 1\right\} \geq \int_0^T dt \int_{\Omega}
du\; \frac{\Vert\nabla\rho(t,u)\Vert^2}{\chi_{\delta}(\rho(t,u))}\,. 
\end{equation*}
We conclude the proof by letting $\delta\downarrow 0$ and by using
Fatou's lemma. 
\end{proof}

\begin{corollary}
\label{rfhe}
The density $\rho$ of a path $\pi(t,du)=\rho(t,u)du$ in $D([0,T],\mc
M^0)$ is the weak solution of the equation \eqref{f02} with initial
profile $\gamma$ if and only if the rate function $I_T(\pi|\gamma)$
vanishes. Moreover, in that case
\begin{equation*}
\int_0^T dt \int_{\Omega}
du\;\frac{\Vert\nabla\rho_t(u)\Vert^2}{\chi(\rho_t(u))}
\;<\;\infty\;. 
\end{equation*}
\end{corollary}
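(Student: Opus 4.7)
The plan is to split the statement into the two implications and the final moment bound, handling each in a few lines because Lemmas \ref{lem01}--\ref{lem03} already carry most of the weight.

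First, for the "weak solution implies $I_T=0$" direction, I would decompose the functional as
\begin{equation*}
J_G(\pi)\;=\;L_G(\pi)\;-\;\tfrac12\int_0^T\langle\sigma(\rho_t),\Vert\nabla G_t\Vert^2\rangle\,dt,
\end{equation*}
where $L_G(\pi)$ collects the six linear-in-$G$ terms. Since $G\in \cc^{1,2}_0(\overline{\Omega_T})$ vanishes on $[0,T]\times\Gamma$, the boundary integral $\int_\Gamma\varphi(b)\mathbf{n}_1\partial_{u_1}G\,dS$ in condition ({\bf H2}) splits exactly into the $\Gamma_+$ and $\Gamma_-$ pieces appearing in $\hat J_G$. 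Hence if $\rho$ is the weak solution of \eqref{f02} with initial profile $\gamma$, then $L_G(\pi)=0$ for every admissible $G$, so $J_G(\pi)\le 0$; taking $G\equiv 0$ shows $J_G(\pi)=0$ is attained, and therefore $I_T(\pi|\gamma)=0$.

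For the converse, assume $I_T(\pi|\gamma)=0$. By Lemma \ref{lem01}, $\pi\in D_\gamma$, so $\rho(0,\cdot)=\gamma$, $B(\rho_t)=b$ for a.e.\ $t$, and $\rho\in L^2([0,T],H^1(\Omega))$. For any $G\in\cc^{1,2}_0(\overline{\Omega_T})$ and $\epsilon\in\R$, replace $G$ by $\epsilon G$ in $J_G(\pi)\le 0$:
\begin{equation*}
\epsilon\,L_G(\pi)\;\le\;\tfrac{\epsilon^2}{2}\int_0^T\langle\sigma(\rho_t),\Vert\nabla G_t\Vert^2\rangle\,dt.
\end{equation*}
Dividing by $\epsilon>0$ and then by $\epsilon<0$ and sending $\epsilon\to 0$ yields $L_G(\pi)=0$, which is precisely condition ({\bf H2}) for $\rho$. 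Combined with the already-established initial and boundary conditions and the $L^2H^1$ regularity, $\rho$ is the weak solution of \eqref{f02}.

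The final moment estimate is immediate: since $I_T(\pi|\gamma)=0<\infty$, Lemma \ref{lem01} gives $\rho\in L^2([0,T],H^1(\Omega))$, so a generalized gradient exists, and \eqref{est2} of Lemma \ref{lem03} yields the bound on $\int_0^T\!\int_\Omega\Vert\nabla\rho_t\Vert^2/\chi(\rho_t)\,du\,dt$. I do not expect any genuine obstacle here; the only subtlety is remembering that because $J_G$ contains the quadratic $\Vert\nabla G\Vert^2$ term, vanishing of $I_T$ does not directly give $J_G=0$ for all $G$, and one must use the $\epsilon$-rescaling argument to extract linearity.
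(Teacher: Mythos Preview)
Your argument is correct and matches the paper's proof almost line for line: the paper also splits $\hat J_G$ into its linear and quadratic parts, observes that the linear part vanishes for weak solutions (and must vanish for all $G$ when the supremum is zero, which is exactly your $\epsilon$-rescaling made explicit), and then invokes Lemma \ref{lem03} for the final estimate. The one detail you omit in the forward direction is that before computing $\sup_G J_G(\pi)$ you must check $\mc Q(\pi)<\infty$---otherwise $I_T(\pi|\gamma)=+\infty$ by definition---but this is immediate from condition ({\bf H1}) of the weak solution, and the paper records exactly this.
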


\begin{proof}
  On the one hand, if the density $\rho$ of a path $\pi(t,du)=
  \rho(t,u)du$ in $D([0,T], \mc M^0)$ is the weak solution of equation
  \eqref{f02}, by assumption ({\bf H1}), the energy $Q(\pi)$ is
  finite. Moreover, since the initial condition is $\gamma$, in the
  formula of $\hat J_G(\pi)$, the linear part in $G$ vanishes which
  proves that the rate functional $I_T(\pi|\gamma)$ vanishes. On the
  other hand, if the rate functional vanishes, the path $\rho$ belongs
  to $L^2([0,T],H^1(\Omega))$ and the linear part in $G$ of $J_G(\pi)$
  has to vanish for all functions $G$. In particular, $\rho$ is a weak
  solution of \eqref{f02}. Moreover, in that case, by the previous
  lemma, the bound claimed holds.
\end{proof}

For each $q>0$, let $E_q$ be the level set of $I_T(\pi|\gamma)$
defined by
\begin{equation*}
E_q=\left\{\pi\in D([0,T],\mc M): I_T(\pi|\gamma)\leq q\right\}\, .
\end{equation*}
By Lemma \ref{lem01}, $E_q$ is a subset of $C([0,T],\mc M^0)$. Thus,
from the previous lemma, it is easy to deduce the next result.

\begin{corollary}
\label{corls}
For every $q\geq 0$, there exists a finite constant $C(q)$ such that
\begin{eqnarray*}
\sup_{\pi\in E_q} \Big\{ \int_0^T dt\;
\left\Vert\partial_t\rho_t\right\Vert_{-1}^2
\;+\; \int_0^T dt \int_{\Omega} du\;
\frac{\Vert\nabla\rho(t,u)\Vert^2}{\chi(\rho(t,u))}
\Big \} \;\leq\; C(q)\;. 
\end{eqnarray*}
\end{corollary}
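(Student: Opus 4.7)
The corollary should fall out as a direct consequence of Lemma~\ref{lem03}, once one observes that the static compressibility satisfies $\chi(r) = r(1-r)\le 1/4$ on $[0,1]$. Before invoking Lemma~\ref{lem03}, I would first note that every $\pi\in E_q$ satisfies $I_T(\pi|\gamma)\le q<\infty$, so by Lemma~\ref{lem01} it lies in $D_\gamma$ and in particular has finite energy, which guarantees that its density $\rho$ admits a generalized gradient in $L^2$. This is exactly the hypothesis required to apply Lemma~\ref{lem03}.

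The second estimate of Lemma~\ref{lem03} then immediately yields
\begin{equation*}
\int_0^T dt \int_\Omega du\;\frac{\Vert\nabla\rho_t(u)\Vert^2}{\chi(\rho_t(u))} \;\leq\; C_0(q+1)\;,
\end{equation*}
uniformly over $\pi\in E_q$, which already handles the second term in the supremum. The key extra observation, which is the crux of the argument, is that since $\chi\le 1/4$ the same inequality controls the plain Dirichlet energy:
\begin{equation*}
\mc Q(\pi) \;=\; \int_0^T dt\int_\Omega du\;\Vert\nabla\rho_t(u)\Vert^2 \;\leq\; \frac{1}{4}\int_0^T dt\int_\Omega du\;\frac{\Vert\nabla\rho_t(u)\Vert^2}{\chi(\rho_t(u))} \;\leq\; \frac{C_0(q+1)}{4}\;.
\end{equation*}
(One may worry about the zero set of $\chi$, but the finiteness of the left-hand side of the weighted bound forces $\nabla\rho_t=0$ a.e.\ on $\{\rho_t\in\{0,1\}\}$, so no issue arises.)

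Feeding this bound on $\mc Q(\pi)$ into the first estimate of Lemma~\ref{lem03} then gives
\begin{equation*}
\int_0^T dt\;\Vert\partial_t\rho_t\Vert_{-1}^2 \;\leq\; C_0\bigl\{ q + \mc Q(\pi) \bigr\} \;\leq\; C_0\,q \;+\; \frac{C_0^{\,2}(q+1)}{4}\;,
\end{equation*}
again uniformly in $\pi\in E_q$. Summing the two displays and setting $C(q) := C_0(q+1) + C_0 q + C_0^{\,2}(q+1)/4$ yields the claim. I do not anticipate any real obstacle; the argument is essentially bookkeeping, the only genuine content being the passage from the $\chi$-weighted gradient bound to an unweighted $L^2$ bound on $\nabla\rho$ via the elementary inequality $\chi\le 1/4$.
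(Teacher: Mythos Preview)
Your proof is correct and follows exactly the route the paper intends: the paper merely states that the corollary ``is easy to deduce'' from Lemma~\ref{lem03} (after noting via Lemma~\ref{lem01} that $E_q\subset C([0,T],\mc M^0)$), and you have supplied precisely the missing bookkeeping, including the key step of using $\chi\le 1/4$ to convert \eqref{est2} into a uniform bound on $\mc Q(\pi)$ that can be fed back into \eqref{est1}.
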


Next result together with the previous estimates provide the
compactness needed in the proof of the lower semicontinuity of the
rate function.

\begin{lemma}
\label{lem02}
Let $\{\rho^n:n\geq 1\}$ be a sequence of functions in $L^2(\Omega_T)$
such that uniformly on $n$, 
\begin{equation*}
\int_0^T dt\left\Vert\rho^n_t\right\Vert^2_{1,2} + \int_0^T dt
\left\Vert\partial_t\rho^n_t\right\Vert_{-1}^2 < C 
\end{equation*}
for some positive constant $C$. Suppose that $\rho\in L^2(\Omega_T)$
and that $\rho^n \rightarrow \rho$ weakly in $L^2(\Omega_T)$. Then
$\rho_n\rightarrow \rho$ strongly in $L^2(\Omega_T)$.
\end{lemma}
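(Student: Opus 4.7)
The statement is a version of the classical Aubin--Lions compactness lemma adapted to the pivot space $H^{-1}(\Omega)$, so my plan is to invoke it directly after setting up the correct functional analytic framework.

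First, I would identify the relevant triple of spaces. The hypothesis $\int_0^T \|\rho^n_t\|^2_{1,2}\, dt \le C$ means that $\{\rho^n\}$ is bounded in $L^2([0,T], H^1(\Omega))$, while $\int_0^T \|\partial_t \rho^n_t\|^2_{-1}\, dt \le C$ means that $\{\partial_t \rho^n\}$ is bounded in $L^2([0,T], H^{-1}(\Omega))$. Consider now the chain
\begin{equation*}
H^1(\Omega) \;\hookrightarrow\; L^2(\Omega) \;\hookrightarrow\; H^{-1}(\Omega)\;.
\end{equation*}
The first inclusion is compact by the Rellich--Kondrachov theorem, using that $\Omega$ is bounded with smooth boundary. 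The second is continuous: any $u\in L^2(\Omega)$ induces a linear functional on $H^1_0(\Omega)$ via $G\mapsto \int_\Omega u\, G\, du$, whose $H^{-1}$ norm is bounded by $\sqrt{C_1}\,\|u\|_2$ thanks to Poincar\'e's inequality (the same constant $C_1$ appearing at the start of Section \ref{sc}).

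Second, I would apply the Aubin--Lions--Simon compactness theorem to conclude that the Banach space
\begin{equation*}
\bigl\{ u \in L^2([0,T], H^1(\Omega)) \,:\, \partial_t u \in L^2([0,T], H^{-1}(\Omega))\bigr\}
\end{equation*}
embeds compactly into $L^2([0,T], L^2(\Omega)) = L^2(\Omega_T)$. Hence, after passing to a subsequence, $\rho^{n_k}$ converges strongly in $L^2(\Omega_T)$ to some limit $\widetilde{\rho}$.

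Finally, since strong convergence implies weak convergence and $\rho^n\to\rho$ weakly in $L^2(\Omega_T)$ by hypothesis, one must have $\widetilde\rho=\rho$. A standard subsequence-of-every-subsequence argument then upgrades convergence of the full sequence: every subsequence of $\{\rho^n\}$ admits a further subsequence converging strongly in $L^2(\Omega_T)$, and all such strong limits must equal $\rho$, so $\rho^n\to\rho$ strongly in $L^2(\Omega_T)$. There is no serious obstacle in this proof beyond correctly identifying the triple of spaces and quoting the compactness theorem; the boundedness of $\Omega$ and the Poincar\'e inequality, both already in use in this section, supply everything that is needed.
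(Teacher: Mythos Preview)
Your proof is correct and follows essentially the same route as the paper: both invoke the Aubin--Lions--Simon compactness result (the paper cites Corollary~8.4 in \cite{Si}) for the triple $H^1(\Omega)\hookrightarrow L^2(\Omega)\hookrightarrow H^{-1}(\Omega)$ with compact first embedding, and then use the assumed weak convergence to identify the strong limit. Your version is somewhat more explicit about verifying the continuous embedding $L^2\hookrightarrow H^{-1}$ and about the subsequence-of-every-subsequence step, but the argument is the same.
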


\begin{proof}
Since $H^1(\Omega)\subset L^2(\Omega)\subset H^{-1}(\Omega)$ with
compact embedding $H^1(\Omega)\to L^2(\Omega)$, from Corollary 8.4,
\cite{Si}, the sequence $\{\rho_n\}$ is relatively compact in
$L^2\big([0,T],L^2(\Omega)\big)$. Therefore the weak convergence
implies the strong convergence in $L^2\big([0,T],L^2(\Omega)\big)$.
\end{proof}

\begin{theorem}
\label{th4}
The functional $I_T(\cdot|\gamma)$ is lower semicontinuous and has
compact level sets. 
\end{theorem}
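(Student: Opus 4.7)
The plan is to prove compactness of each level set $E_q = \{\pi \in D([0,T],\mc M) : I_T(\pi|\gamma) \le q\}$; since $D([0,T],\mc M)$ is metrizable, compactness of all level sets immediately yields lower semicontinuity of $I_T(\cdot|\gamma)$. The argument has two parts: first, extract a convergent subsequence from an arbitrary sequence in $E_q$; second, show the limit remains in $E_q$.

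For the compactness part, fix $\{\pi^n\} \subset E_q$. By Lemma \ref{lem01} each $\pi^n$ lies in $D_\gamma$ and so has the form $\pi^n(t,du) = \rho^n(t,u)\,du$ with $\rho^n \in [0,1]$. Corollary \ref{corls} provides uniform bounds on $\int_0^T \|\partial_t \rho^n_t\|_{-1}^2\,dt$ and on $\int_0^T\!\int_\Omega \|\nabla\rho^n_t\|^2/\chi(\rho^n_t)\,du\,dt$; since $\chi \le 1/4$ on $[0,1]$, the second of these controls $\int_0^T \|\rho^n_t\|_{1,2}^2\,dt$. Lemma \ref{lem02} then delivers a subsequence converging strongly in $L^2(\Omega_T)$ to some $\rho$, necessarily belonging to $L^2([0,T], H^1(\Omega))$ by weak lower semicontinuity of the $H^1$ norm. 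The equicontinuity estimate \eqref{cont} established in the proof of Lemma \ref{lem01} shows that, for each $g \in \mc C^\infty_c(\Omega)$, the functions $t \mapsto \<\pi^n_t, g\>$ are uniformly $1/2$--Hölder continuous in $t$; combined with the strong $L^2$ convergence this upgrades to uniform-in-$t$ convergence of $\<\pi^n_t, g\>$ to $\<\pi_t, g\>$, where $\pi(t,du) = \rho(t,u)\,du$. Hence $\pi^n \to \pi$ in $C([0,T],\mc M)$ and a fortiori in the Skorohod topology.

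To conclude $\pi \in E_q$, I would show that $J_G(\pi) \le \liminf_n I_T(\pi^n|\gamma) \le q$ for every $G \in \mc C^{1,2}_0(\overline{\Omega_T})$ and then take the supremum. The convergence $J_G(\pi^n) \to J_G(\pi)$ is the main computation: the terms $\<\pi^n_T, G_T\>$ and $\int_0^T \<\pi^n_t, \partial_t G_t\>\,dt$ pass to the limit by the uniform and strong $L^2$ convergence, respectively, the boundary terms do not depend on $\pi^n$, and the nonlinear terms $\int_0^T \<\varphi(\rho^n_t), \Delta G_t\>\,dt$ and $\int_0^T \<\sigma(\rho^n_t), \|\nabla G_t\|^2\>\,dt$ converge by dominated convergence, using boundedness and continuity of $\varphi$ and $\sigma$ on $[0,1]$ together with the a.e.\ convergence extracted from the strong $L^2$ convergence. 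Since $\mc Q$ is convex and lower semicontinuous on $D([0,T],\mc M)$, one also has $\mc Q(\pi) \le \liminf \mc Q(\pi^n) < \infty$, which legitimates the identification $I_T(\pi|\gamma) = \sup_G J_G(\pi)$. Taking the supremum over $G \in \mc C^{1,2}_0(\overline{\Omega_T})$ yields $I_T(\pi|\gamma) \le q$.

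The main technical obstacle is the passage to the limit in the nonlinear terms of $J_G$: mere Skorohod convergence of $\pi^n$ does not suffice, and this is precisely why one must upgrade weak $L^2$ compactness (which follows from the uniform $L^2([0,T],H^1)$ bound alone) to strong $L^2$ compactness. The Aubin--Lions type embedding provided by Lemma \ref{lem02} is the essential tool, and the required hypotheses — uniform bounds on the energy and on $\int_0^T \|\partial_t \rho^n_t\|_{-1}^2\,dt$ — are supplied exactly by Corollary \ref{corls}. Everything else in the proof is bookkeeping.
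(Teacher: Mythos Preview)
Your proposal is correct and uses exactly the same ingredients as the paper: Corollary \ref{corls} for the uniform $H^1$ and $H^{-1}$ bounds, Lemma \ref{lem02} (Aubin--Lions) to upgrade weak to strong $L^2(\Omega_T)$ convergence, the estimate \eqref{cont} for equicontinuity in $t$, the lower semicontinuity of $\mc Q$, and the passage to the limit in $J_G(\pi^n)$ via strong $L^2$ convergence and dominated convergence in the nonlinear terms. The only organizational difference is that the paper separates the argument into ``$E_q$ is closed in $C([0,T],\mc M^0)$'' plus ``$E_q$ is relatively compact,'' whereas you prove sequential compactness in one pass; this is immaterial.

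Two small points worth tightening: (i) Lemma \ref{lem02} as stated takes weak $L^2$ convergence as a \emph{hypothesis}, so you should first extract a weakly convergent subsequence from boundedness in $L^2(\Omega_T)$ before invoking it; (ii) to pass from uniform convergence of $t\mapsto\langle\pi^n_t,g\rangle$ for each fixed $g\in\mc C^\infty_c(\Omega)$ to convergence in $C([0,T],\mc M)$, one runs a diagonal argument over a countable dense family of $g$'s and uses compactness of $\mc M$ --- the paper handles this via the approximation remark preceding \eqref{est4}. Neither point is a genuine gap.
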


\begin{proof}
We have to show that, for all $q\geq 0$, $E_q$ is compact in
$D([0,T],\mc M)$. Since $E_q\subset C([0,T],\mc M^0)$ and $C([0,T],\mc
M^0)$ is a closed subset of $D([0,T],\mc M)$, we just need to show
that $E_q$ is compact in $C([0,T],\mc M^0)$.

We will show first that $E_q$ is closed in $C([0,T],\mc M^0)$. Fix
$q\in\bb{R}$ and let $\{\pi^n:\,n\geq 1\}$ be a sequence in $E_q$
converging to some $\pi$ in $C([0,T],\mc M^0)$. Then, for all $G\in\mc
C(\overline{\Omega_T})$,
\begin{eqnarray*}
\lim_{n\to\infty}\int_0^T dt\;\langle\pi^n_t,G_t\rangle = \int_0^T
dt\;\langle\pi_t,G_t\rangle\, . 
\end{eqnarray*}
Notice that this means that $\pi^n\rightarrow\pi$ weakly in
$L^2(\Omega_T)$, which together with Corollary \ref{corls} and Lemma
\ref{lem02} imply that $\pi^n\rightarrow\pi$ strongly in
$L^2(\Omega_T)$. From this fact and the definition of $J_G$ it is easy
to see that, for all $G$ in $\mc C_0^{1,2}(\overline{\Omega_T})$,
\begin{equation*}
\lim_{n\to\infty}J_G(\pi_n) = J_G(\pi)\, .
\end{equation*}
This limit, Corollary \ref{corls} and the lower semicontinuity of $\mc
Q$ permit us to conclude that $\mc Q(\pi)\leq C(q)$ and that
$I_T(\pi|\gamma)\leq q$.

We prove now that $E_q$ is relatively compact. To this end, it is
enough to prove that for every continuous function
$G:\overline\Omega\to\bb R$,
\begin{eqnarray}
\label{est4}
\lim_{\delta\to 0}\sup_{\pi\in E_q}\sup_{\substack{0\leq s,r\leq T\\
  |r-s|<\delta}}|\langle\pi_r,G\rangle-\langle\pi_s,G\rangle|=0\, . 
\end{eqnarray}
Since $E_q\subset C([0,T],\mc M^0)$, we may assume by approximations
of $G$ in $L^1(\Omega)$ that $G\in\mc C_c^{\infty}(\Omega)$. In which
case, \eqref{est4} follows from \eqref{cont}.
\end{proof}

We conclude this section with an explicit formula for the rate
function $I_T(\cdot |\gamma)$.  For each $\pi (t,du) = \rho(t,u)du$ in
$D([0,T], \mc M^0)$, denote by $H^1_0(\sigma(\rho))$ the Hilbert space
induced by $\mc C^{1,2}_0(\overline{\Omega_T})$ endowed with the inner
product $\langle\cdot,\cdot\rangle_{\sigma(\rho)}$ defined by
\begin{equation*}
\langle H,G\rangle_{\sigma(\rho)}
=\int_0^Tdt\; \langle\sigma(\rho_t),\nabla H_t\cdot\nabla G_t\rangle
\,. 
\end{equation*}
Induced means that we first declare two functions $F,G$ in $\mc
C^{1,2}_0(\overline{\Omega_T})$ to be equivalent if $\langle
F-G,F-G\rangle_{\sigma(\rho)} = 0$ and then we complete the quotient
space with respect to the inner product
$\langle\cdot,\cdot\rangle_{\sigma(\rho)}$. The norm of
$H^1_0(\sigma(\rho))$ is denoted by $\Vert\cdot\Vert_{\sigma(\rho)}$.

Fix a path $\rho$ in $D([0,T], \mc M^0)$ and a function $H$ in
$H^1_0(\sigma(\rho))$.  A measurable function $\lambda :
[0,T]\times\Omega \to [0,1]$ is said to be a weak solution of the
nonlinear boundary value parabolic equation
\begin{eqnarray}
\label{f05}
\begin{cases}
\partial_t\lambda   & = \;\;  \Delta\varphi(\lambda) -
\sum_{i=1}^d\partial_{u_i} \left(\sigma(\lambda)
\partial_{u_i} H\right)\, ,\\ 
\lambda(0,\cdot)    & =  \;\; \gamma\, ,\\
\lambda(t,\cdot)|_\Gamma   & = \;\; b \;\;\; 
\hbox{ for }\; 0\leq t\leq T \, .
\end{cases}
\end{eqnarray}
if it satisfies the following two conditions.

\begin{enumerate}
\item[\bf (H1')] $\lambda$ belongs to $L^2 \left( [0,T] ,
    H^1(\Omega)\right)$: 
\begin{equation*}
\int_0^T d s\Big( \int_\Omega {\parallel\nabla \lambda(s,u)\parallel}^2 
du \Big)<\infty \; ;
\end{equation*}

\item[\bf (H2')] For every function $G(t,u)=G_t(u)$ in ${\cc}^{1,2}_0
  (\overline{\Omega_T})$,
\begin{align*}
& \int_\Omega du \, \big\{ G_T(u)\rho(T,u)-G_0(u)\gamma (u)\big\} -
\int_0^T ds \int_\Omega d u \,  (\partial_s G_s)(u)\lambda(s,u) \\
& \quad =\; \int_0^T d s \int_\Omega d u \,
(\Delta G_s)(u) \varphi \big(\lambda(s,u)\big) 
\; -\; \int_0^T ds \int_\Gamma \varphi(b(u)) {\text{\bf n}}_1 (u) 
(\partial_{u_1} G_s (u)) \text{d} \text{S} \\
& \quad+\int_0^T ds \int_{\Omega} du\; \sigma(\lambda(s,u))\nabla
H_s(u)\cdot\nabla G_s(u)\; . 
\end{align*}
\end{enumerate}

In Section \ref{sec5} we prove uniqueness of weak solutions of
equation \eqref{f05} when $H$ belongs to $L^2 \left( [0,T] ,
  H^1(\Omega)\right)$, i.e., provided
\begin{equation*}
\int_0^T dt\int_{\Omega} du\; \Vert\nabla H_t(u)\Vert^2 < \infty\, .
\end{equation*}

\begin{lemma}
\label{lem05}
Assume that $\pi(t,du) = \rho(t,u)du$ in $D([0,T],\mc M^0)$ has finite
rate function: $I_T(\pi|\gamma)<\infty$. Then, there exists a function
$H$ in $H^1_0(\sigma(\rho))$ such that $\rho$ is a weak solution to
\eqref{f05}.  Moreover,
\begin{eqnarray}
\label{f06}
I_T(\pi|\gamma) = \frac{1}{2}\Vert H\Vert_{\sigma(\rho)}^2\, .
\end{eqnarray}
\end{lemma}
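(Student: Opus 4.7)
The plan is a standard Riesz representation argument applied to the Hilbert space $H^1_0(\sigma(\rho))$. First I would rewrite the functional $\hat J_G(\pi)$ as
\begin{equation*}
\hat J_G(\pi) \;=\; \ell_\pi(G) \;-\; \tfrac{1}{2}\,\Vert G\Vert^2_{\sigma(\rho)}\,,
\end{equation*}
where $\ell_\pi:\mc C^{1,2}_0(\overline{\Omega_T})\to\bb R$ is the linear functional collecting all the terms of $\hat J_G$ that are linear in $G$, namely
\begin{equation*}
\ell_\pi(G) \;=\; \langle\pi_T,G_T\rangle-\langle\gamma,G_0\rangle -\int_0^T\langle\pi_t,\partial_tG_t\rangle\,dt -\int_0^T\langle\varphi(\rho_t),\Delta G_t\rangle\,dt +\int_0^T dt\int_{\Gamma^\pm}\!\!(\pm 1)\,\varphi(b)\,\partial_{u_1}G\, dS\,.
\end{equation*}

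Next I would exploit the assumption $I_T(\pi|\gamma)<\infty$ to show that $\ell_\pi$ extends to a continuous linear functional on $H^1_0(\sigma(\rho))$. Indeed, for every real $c$ and every $G\in\mc C^{1,2}_0(\overline{\Omega_T})$, applying the definition of $I_T$ to $cG$ gives
\begin{equation*}
c\,\ell_\pi(G) \;-\; \tfrac{c^2}{2}\,\Vert G\Vert^2_{\sigma(\rho)} \;\leq\; I_T(\pi|\gamma)\,.
\end{equation*}
Optimizing in $c$ (and using $-G$ as well, since $\ell_\pi$ is linear) yields $|\ell_\pi(G)|\leq\sqrt{2\,I_T(\pi|\gamma)}\,\Vert G\Vert_{\sigma(\rho)}$. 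In particular $\ell_\pi$ vanishes on the equivalence classes of the seminorm $\Vert\cdot\Vert_{\sigma(\rho)}$ and therefore defines a bounded linear functional on the completion $H^1_0(\sigma(\rho))$. By the Riesz representation theorem there exists a unique $H\in H^1_0(\sigma(\rho))$ such that
\begin{equation*}
\ell_\pi(G) \;=\; \langle H,G\rangle_{\sigma(\rho)} \;=\; \int_0^T dt\,\langle\sigma(\rho_t),\nabla H_t\cdot\nabla G_t\rangle
\end{equation*}
for every $G\in\mc C^{1,2}_0(\overline{\Omega_T})$. Substituting this identity back into the definition of $\hat J_G$ and rearranging produces exactly the weak formulation (\textbf{H2'}) of \eqref{f05}; assumption (\textbf{H1'}) is inherited from the finiteness of $\mc Q(\pi)$ provided by Lemma \ref{lem03}. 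The boundary condition $B(\rho_t)=b$ and initial condition $\rho(0,\cdot)=\gamma$ are built into Lemma \ref{lem01}.

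Finally, for the identity \eqref{f06} I would use that with the representation above
\begin{equation*}
\hat J_G(\pi) \;=\; \langle H,G\rangle_{\sigma(\rho)} \;-\; \tfrac{1}{2}\,\Vert G\Vert^2_{\sigma(\rho)}\,.
\end{equation*}
Since $\mc C^{1,2}_0(\overline{\Omega_T})$ is dense in $H^1_0(\sigma(\rho))$ by construction, the supremum of the right-hand side over $G$ in this class equals the supremum over all of $H^1_0(\sigma(\rho))$, which is attained (in the limit) at $G=H$ and equals $\tfrac{1}{2}\Vert H\Vert^2_{\sigma(\rho)}$. Hence $I_T(\pi|\gamma)=\tfrac{1}{2}\Vert H\Vert^2_{\sigma(\rho)}$.

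The main obstacle I anticipate is technical rather than conceptual: one must justify using $\Vert\cdot\Vert_{\sigma(\rho)}$ as a genuine Hilbert norm despite the degeneracy of $\sigma(\rho)$ at the boundary values $0$ and $1$. This is handled by working on the quotient/completion defining $H^1_0(\sigma(\rho))$, as in the statement of the lemma; the linear functional $\ell_\pi$ is well-defined modulo the null space of this seminorm precisely because of the a priori bound $|\ell_\pi(G)|\leq\sqrt{2I_T(\pi|\gamma)}\Vert G\Vert_{\sigma(\rho)}$, which forces $\ell_\pi$ to annihilate any $G$ with $\Vert G\Vert_{\sigma(\rho)}=0$. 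Once this subtlety is dealt with, Riesz representation proceeds without further complication.
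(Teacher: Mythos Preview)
Your proposal is correct and follows exactly the Riesz representation argument that the paper has in mind: the paper omits the proof, referring to Lemma~10.5.3 in \cite{KL}, which proceeds precisely by splitting $\hat J_G(\pi)$ into a linear part and the quadratic $-\tfrac12\Vert G\Vert_{\sigma(\rho)}^2$, bounding the linear part via the finiteness of $I_T(\pi|\gamma)$, and then invoking Riesz on the completion $H^1_0(\sigma(\rho))$. Your treatment of the degeneracy of $\sigma(\rho)$ through the quotient/completion and of the identity \eqref{f06} via density is also the standard one.
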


The proof of this lemma is similar to the one of Lemma 5.3 in
\cite{KL} and is therefore omitted.

\section{$I_T(\cdot|\gamma)$-Density}

The main result of this section, stated in Theorem \ref{th5}, asserts
that any trajectory $\lambda_t$, $0\le t\le T$, with finite rate
function, $I_T(\lambda|\gamma)<\infty$, can be approximated by a
sequence of smooth trajectories $\{\lambda^n : n\ge 1\}$ such that
\begin{equation*}
\lambda^n \longrightarrow \lambda \quad\text{and}\quad 
I_T(\lambda^n|\gamma)  \longrightarrow  I_T(\lambda|\gamma)\;.
\end{equation*}
This is one of the main steps in the proof of the lower bound of the
large deviations principle for the empirical measure.  The proof
reposes mainly on the regularizing effects of the hydrodynamic
equation and is one of the main contributions of this article, since
it simplifies considerably the existing methods.

A subset $A$ of $D([0,T],\mc M)$ is said to be
$I_T(\cdot|\gamma)$-dense if for every $\pi$ in $D([0,T],\mc M)$ such
that $I_T(\pi|\gamma)<\infty$, there exists a sequence $\{\pi^n : n\ge
1\}$ in $A$ such that $\pi^n$ converges to $\pi$ and
$I_T(\pi^n|\gamma)$ converges to $I_T(\pi|\gamma)$.

Let $\Pi_1$ be the subset of $D([0,T],\mc M^0)$ consisting of paths
$\pi(t,du) = \rho(t,u)du$ whose density $\rho$ is a weak solution of
the hydrodynamic equation \eqref{f02} in the time interval
$[0,\delta]$ for some $\delta>0$.

\begin{lemma}
The set $\Pi_1$ is $I_T(\cdot|\gamma)$-dense.
\end{lemma}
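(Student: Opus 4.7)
The plan is to invoke Lemma \ref{lem05} to realise $\rho$ as the solution of the perturbed equation \eqref{f05} driven by an external field $H$, and then to approximate $H$ by fields which vanish on an initial interval $[0,\delta]$, so that the corresponding solutions automatically satisfy the pure hydrodynamic equation \eqref{f02} near $t=0$ and hence lie in $\Pi_1$.

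Fix $\pi(t,du)=\rho(t,u)\,du$ with $I_T(\pi|\gamma)<\infty$. By Lemma \ref{lem05} there exists $H\in H^1_0(\sigma(\rho))$ such that $\rho$ solves \eqref{f05} with field $H$ and
\begin{equation*}
I_T(\pi|\gamma) \;=\; \frac{1}{2}\int_0^T\!\!\int_\Omega \sigma(\rho_t)\,\Vert\nabla H_t\Vert^2\,du\,dt.
\end{equation*}
For each $\delta>0$, set $H^\delta_t := H_t\,\mathbf{1}_{\{t\geq\delta\}}$ and let $\rho^\delta$ be the unique weak solution of \eqref{f05} driven by $H^\delta$ with initial datum $\gamma$; existence and uniqueness follow from the well-posedness result for \eqref{f05} announced in Section \ref{sec5}, applied after first mollifying $H$ in time and space so that it lies in $L^2([0,T],H^1_0(\Omega))$ while keeping \eqref{f06} approximately preserved (a standard approximation one invokes once to reduce to the smooth case). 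Since $H^\delta\equiv 0$ on $[0,\delta]$, the restriction of $\rho^\delta$ to this interval solves \eqref{f02}, so $\pi^\delta(t,du):=\rho^\delta(t,u)\,du$ belongs to $\Pi_1$.

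It remains to show $\pi^\delta\to\pi$ in $D([0,T],\mc M)$ and $I_T(\pi^\delta|\gamma)\to I_T(\pi|\gamma)$. For the first, I would subtract the weak formulations of \eqref{f05} satisfied by $\rho$ and $\rho^\delta$ and test against $\rho-\rho^\delta$, which lies in $L^2([0,T],H^1_0(\Omega))$ because both densities share the trace $b$ on $\Gamma$. Writing $\varphi(\rho)-\varphi(\rho^\delta)=\Phi(\rho,\rho^\delta)(\rho-\rho^\delta)$ with $\Phi$ bounded and using the uniform Lipschitz character of $\sigma$ on $[0,1]$, a Gronwall-type argument controls $\Vert\rho^\delta-\rho\Vert^2_{L^2(\Omega_T)}$ by a constant multiple of $\int_0^\delta\int_\Omega \sigma(\rho_t)\,\Vert\nabla H_t\Vert^2\,du\,dt$, which tends to $0$ as $\delta\to 0$. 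Combined with the $L^2([0,T],H^1(\Omega))$ and $L^2([0,T],H^{-1}(\Omega))$ bounds provided by Corollary \ref{corls} (which apply once one checks that $I_T(\pi^\delta|\gamma)$ is bounded uniformly in $\delta$, a consequence of the next step), Lemma \ref{lem02} upgrades this to strong $L^2$ convergence, hence to convergence in $D([0,T],\mc M)$.

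For the convergence of rate functions, Lemma \ref{lem05} applied to $\pi^\delta$ with its natural driving field $H^\delta$ gives
\begin{equation*}
I_T(\pi^\delta|\gamma) \;=\; \frac{1}{2}\int_\delta^T\!\!\int_\Omega \sigma(\rho^\delta_t)\,\Vert\nabla H_t\Vert^2\,du\,dt,
\end{equation*}
and dominated convergence, using the boundedness and continuity of $\sigma$, the integrability of $\sigma(\rho)\Vert\nabla H\Vert^2$, and the strong $L^2$ convergence $\rho^\delta\to\rho$, yields $I_T(\pi^\delta|\gamma)\to I_T(\pi|\gamma)$. The main obstacle is the stability estimate $\rho^\delta\to\rho$: one must propagate the smallness of $H-H^\delta$ through the quasilinear term $\Delta\varphi(\rho)-\Delta\varphi(\rho^\delta)$ and the drift $\nabla\cdot[\sigma(\rho)\nabla H-\sigma(\rho^\delta)\nabla H^\delta]$ without losing the gain coming from the truncation on $[0,\delta]$; this is where the assumption $H\in L^2([0,T],H^1_0(\Omega))$ (obtained by the preliminary mollification) is essential.
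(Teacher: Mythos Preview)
Your approach has a genuine gap that the paper's construction is specifically designed to avoid. The field $H$ produced by Lemma \ref{lem05} lies only in $H^1_0(\sigma(\rho))$, meaning $\int_0^T\!\int_\Omega \sigma(\rho_t)\Vert\nabla H_t\Vert^2<\infty$; since $\sigma(r)=2r(1-r)(1+2ar)$ degenerates at $r=0$ and $r=1$, this does \emph{not} imply $H\in L^2([0,T],H^1_0(\Omega))$. Your parenthetical ``standard mollification'' cannot fix this: mollifying in a weighted space with a degenerate weight does not yield a function in the unweighted space. Consequently the well-posedness theory of Section \ref{sec5} (Lemma \ref{uniqH}, which requires \eqref{fin}) is unavailable, so neither existence nor uniqueness of your $\rho^\delta$ is justified, and the stability/Gronwall step cannot even be set up. This is exactly why the paper postpones any appeal to the field $H$ until $\Pi_2$ has been reached and $\rho$ is bounded away from $0$ and $1$ (see the proof of Theorem \ref{th5}, where \eqref{fin} is obtained precisely from that bound).

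The paper's proof of this lemma bypasses $H$ entirely and is far more elementary: it constructs $\rho^\delta$ by explicit path surgery, namely
\[
\rho^\delta_t \;=\;
\begin{cases}
\lambda_t & 0\le t\le\delta,\\
\lambda_{2\delta-t} & \delta\le t\le 2\delta,\\
\rho_{t-2\delta} & 2\delta\le t\le T,
\end{cases}
\]
where $\lambda$ is the hydrodynamic solution starting from $\gamma$. No new PDE has to be solved. The rate function splits into three pieces: the first vanishes (hydrodynamic path), the third is at most $I_T(\pi|\gamma)$ (time translation), and the second --- a time-reversed hydrodynamic piece --- is bounded by $\int_0^\delta\!\int_\Omega \varphi'(\lambda)\Vert\nabla\lambda\Vert^2/\chi(\lambda)$, which tends to $0$ by Corollary \ref{rfhe}. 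Convergence $\pi^\delta\to\pi$ is immediate from continuity of $\pi$. Your strategy is closer in spirit to the final step (Theorem \ref{th5}), but that step only works \emph{after} $\Pi_1$ and $\Pi_2$ have made the weight $\sigma(\rho)$ non-degenerate.
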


\begin{proof}
Fix $\pi(t,du) = \rho(t,u) du$ in $D([0,T],\mc M)$ such that
$I_T(\pi|\gamma)<\infty$. By Lemma \ref{lem01}, $\pi$ belongs to
$C([0,T],\mc M^0)$. For each $\delta>0$, let $\rho^{\delta}$ be the
path defined as
\begin{equation*}
\rho^{\delta}(t,u) = 
\begin{cases}
\lambda(t,u) & \hbox{ if } 0\leq t\leq\delta\, , \\
\lambda(2\delta-t,u) & \hbox{ if } \delta\leq t\leq 2\delta\, , \\
\rho(t-2\delta,u) & \hbox{ if } 2\delta\leq t\leq T\, ,
\end{cases}
\end{equation*}
where $\lambda$ is the weak solution of the hydrodynamic equation
\eqref{f02} starting at $\gamma$.  It is clear that $\pi^\delta(t,du)
= \rho^{\delta}(t,u)du$ belongs to $D_{\gamma}$, because so do $\pi$
and $\lambda$ and that $\mc Q(\pi^{\delta})\leq \mc Q(\pi) +2\mc
Q(\lambda)<\infty$. Moreover, $\pi^{\delta}$ converges to $\pi$ as
$\delta\downarrow 0$ because $\pi$ belongs to $\mc C([0,T],\mc M)$. By
the lower semicontinuity of $I_T(\cdot|\gamma)$, $I_T(\pi|\gamma)\leq
\liminf_{\delta\to 0} I_T(\pi^{\delta}|\gamma)$. Then, in order to
prove the lemma, it is enough to prove that $I_T(\pi|\gamma)\geq
\limsup_{\delta\to 0} I_T(\pi^{\delta}|\gamma)$. To this end,
decompose the rate function $I_T(\pi^{\delta}|\gamma)$ as the sum of
the contributions on each time interval $[0,\delta]$,
$[\delta,2\delta]$ and $[2\delta,T]$. The first contribution vanishes
because $\pi^{\delta}$ solves the hydrodynamic equation in this
interval. On the time interval $[\delta,2\delta]$,
$\partial_t\rho^\delta_t = -\partial_t\lambda_{2\delta-t}=
-\Delta\varphi(\lambda_{2\delta-t}) =-\Delta\varphi(\rho^\delta_t)$.
In particular, the second contribution is equal to
\begin{eqnarray*}
\sup_{G\in \mc C^{1,2}_0(\overline{\Omega_T})}
\Big\{2\int_{0}^{\delta}ds\int_{\Omega}du\;
\nabla\varphi(\lambda)\cdot\nabla G -
\frac{1}{2}\int_{0}^{\delta}ds\; 
\langle\sigma(\lambda_t),\Vert\nabla G_t\Vert^2\rangle\Big\} 
\end{eqnarray*}
which, by Schwarz inequality, is bounded above by
\begin{equation*}
\int_0^{\delta}ds\int_{\Omega}du\;\varphi'(\lambda)
\frac{\Vert\nabla\lambda\Vert^2}{\chi(\lambda)}\, .
\end{equation*}

By Corollary \ref{rfhe}, this last expression converges to zero as
$\delta\downarrow 0$. Finally, the third contribution is bounded by
$I_T(\pi|\gamma)$ because $\pi^{\delta}$ in this interval is just a
time translation of the path $\pi$.
\end{proof}

Let $\Pi_2$ be the set of all paths $\pi$ in $\Pi_1$ with the property
that for every $\delta>0$ there exists $\epsilon>0$ such that
$\epsilon\leq\pi_t(\cdot) \leq 1-\epsilon$ for all $t\in[\delta,T]$.

\begin{lemma}
The set $\Pi_2$ is $I_T(\cdot|\gamma)$-dense.
\end{lemma}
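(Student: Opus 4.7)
By the preceding lemma $\Pi_1$ is $I_T(\cdot|\gamma)$-dense, so it suffices to approximate each $\pi$ in $\Pi_1$ with $I_T(\pi|\gamma)<\infty$ by elements of $\Pi_2$. Fix such $\pi$, with density $\rho$ solving \eqref{f02} on $[0,2\delta_0]$ for some $\delta_0>0$, and let $\bar\rho$ be the stationary profile of Theorem \ref{th1}. Because $b$ takes values strictly in $(0,1)$, the strong maximum principle applied to \eqref{f01} yields $0<\kappa\le\bar\rho(u)\le 1-\kappa<1$ on $\overline\Omega$ for some $\kappa>0$. Choose $\chi\in\mc C^\infty([0,T])$ with $0\le\chi\le 1$, $\chi\equiv 0$ on $[0,\delta_0]$ and $\chi\equiv 1$ on $[2\delta_0,T]$, and for each $\epsilon\in(0,1)$ set
\begin{equation*}
\rho^\epsilon(t,u)\;=\;\bigl(1-\epsilon\chi(t)\bigr)\rho(t,u)\;+\;\epsilon\chi(t)\bar\rho(u),\qquad \pi^\epsilon_t(du)=\rho^\epsilon(t,u)\,du.
\end{equation*}

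Since $\chi(0)=0$, $\rho^\epsilon(0,\cdot)=\gamma$, and since $\rho$ and $\bar\rho$ share boundary trace $b$, so does $\rho^\epsilon$. On $[0,\delta_0]$, $\rho^\epsilon=\rho$ solves \eqref{f02}, so $\pi^\epsilon\in\Pi_1$. To verify the $\Pi_2$ bound, fix $\delta>0$: on $[\delta,\delta_0]$ (when nonempty) the hydrodynamic solution $\rho$ is smooth and strictly in $(0,1)$, uniformly bounded away from $\{0,1\}$ by parabolic regularity of \eqref{f02} and the strong maximum principle; on $[\delta_0,2\delta_0]$, $\rho^\epsilon$ is a convex combination of $\rho$ and $\bar\rho$, each uniformly bounded away from $\{0,1\}$; on $[2\delta_0,T]$, $\rho^\epsilon\in[\epsilon\kappa,\,1-\epsilon\kappa]$. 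Hence $\pi^\epsilon\in\Pi_2$. As $\|\rho^\epsilon-\rho\|_\infty\le\epsilon$, $\pi^\epsilon\to\pi$ in $D([0,T],\mc M)$, so by Theorem \ref{th4} we have $I_T(\pi|\gamma)\le\liminf_{\epsilon\to 0}I_T(\pi^\epsilon|\gamma)$.

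For the $\limsup$, apply Lemma \ref{lem05} to obtain $H\in H^1_0(\sigma(\rho))$ with $\rho$ a weak solution of \eqref{f05} driven by $H$ and $I_T(\pi|\gamma)=(1/2)\|H\|^2_{\sigma(\rho)}$; since $\rho$ solves \eqref{f02} on $[0,2\delta_0]$, one may take $H_t\equiv 0$ there in the equivalence class of $H^1_0(\sigma(\rho))$. Substituting $\rho^\epsilon$ into \eqref{f05} and invoking the equation for $\rho$ shows that $\rho^\epsilon$ solves \eqref{f05} with $H^\epsilon\in L^2([0,T],H^1_0(\Omega))$ provided, for a.e.\ $t$,
\begin{equation*}
\nabla\cdot\bigl(\sigma(\rho^\epsilon_t)\nabla H^\epsilon_t\bigr)\;=\;(1-\epsilon\chi(t))\nabla\cdot\bigl(\sigma(\rho_t)\nabla H_t\bigr)\;+\;\epsilon\chi'(t)(\rho_t-\bar\rho)\;+\;\Delta\bigl[\varphi(\rho^\epsilon_t)-(1-\epsilon\chi(t))\varphi(\rho_t)\bigr]
\end{equation*}
with zero trace on $\Gamma$. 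Since $\rho^\epsilon_t$ is uniformly bounded away from $\{0,1\}$ on $[\delta_0,T]$, $\sigma(\rho^\epsilon_t)$ is uniformly elliptic and this problem has a unique such solution (Section \ref{sec5}); on $[0,\delta_0]$ the right hand side vanishes and $H^\epsilon\equiv 0$. Schwarz inequality applied inside the variational definition of $I_T$ then gives $I_T(\pi^\epsilon|\gamma)\le(1/2)\|H^\epsilon\|^2_{\sigma(\rho^\epsilon)}$. On $[\delta_0,2\delta_0]$, parabolic regularity keeps $\rho$ smooth and bounded away from $\{0,1\}$, all source terms are smooth and uniformly $O(\epsilon)$, and the contribution to $\|H^\epsilon\|^2_{\sigma(\rho^\epsilon)}$ from this interval is $O(\epsilon^2)$. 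On $[2\delta_0,T]$, $\chi\equiv 1$ and $\chi'\equiv 0$, the perturbation $\varphi(\rho^\epsilon_t)-(1-\epsilon)\varphi(\rho_t)$ is uniformly $O(\epsilon)$, and a standard energy estimate combined with dominated convergence yields $\sigma(\rho^\epsilon_t)\nabla H^\epsilon_t\to\sigma(\rho_t)\nabla H_t$ in $L^2(\Omega_T)$. Consequently $\|H^\epsilon\|^2_{\sigma(\rho^\epsilon)}\to\|H\|^2_{\sigma(\rho)}=2\,I_T(\pi|\gamma)$, completing the argument.

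The main obstacle is the $L^2$ convergence of the fluxes $\sigma(\rho^\epsilon)\nabla H^\epsilon$ on $[2\delta_0,T]$: it depends on uniform ellipticity of $\sigma(\rho^\epsilon)$, which is precisely what blending $\rho$ with the stationary profile $\bar\rho$ was designed to guarantee.
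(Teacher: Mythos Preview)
Your approach differs from the paper's in two respects: you mix $\rho$ with the stationary profile $\bar\rho$ via a time cutoff $\chi(t)$, whereas the paper simply takes $\rho^\varepsilon=(1-\varepsilon)\rho+\varepsilon\lambda$ with $\lambda$ the hydrodynamic solution from $\gamma$ (no cutoff is needed since $\lambda(0,\cdot)=\gamma$ and $\rho=\lambda$ on $[0,2\delta_0]$); and for the $\limsup$ you try to solve explicitly for the field $H^\varepsilon$ and prove $\|H^\varepsilon\|^2_{\sigma(\rho^\varepsilon)}\to\|H\|^2_{\sigma(\rho)}$, whereas the paper bounds $I_T(\pi^\varepsilon|\gamma)$ directly by $\tfrac12\int\!\!\int\|\mathbf P^\varepsilon+\nabla\varphi(\rho^\varepsilon)\|^2/\sigma(\rho^\varepsilon)$ and passes to the limit by a uniform-integrability argument.

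The genuine gap lies in your convergence step on $[2\delta_0,T]$. You assert that ``a standard energy estimate combined with dominated convergence yields $\sigma(\rho^\varepsilon_t)\nabla H^\varepsilon_t\to\sigma(\rho_t)\nabla H_t$ in $L^2(\Omega_T)$'' and that ``consequently $\|H^\varepsilon\|^2_{\sigma(\rho^\varepsilon)}\to\|H\|^2_{\sigma(\rho)}$''. Neither claim is justified. On $[2\delta_0,T]$ the coefficient $\sigma(\rho^\varepsilon)$ is only bounded below by a constant of order $\varepsilon$, so the elliptic estimates for $H^\varepsilon$ degenerate as $\varepsilon\downarrow 0$; testing the equation against $H^\varepsilon$ and using Cauchy--Schwarz with the comparison $\sigma(\rho)\le C_a(1-\varepsilon)^{-1}\sigma(\rho^\varepsilon)$ gives only $\limsup\|H^\varepsilon\|^2_{\sigma(\rho^\varepsilon)}\le C_a\|H\|^2_{\sigma(\rho)}$ with $C_a=\sup\varphi'/\inf\varphi'>1$, which is not enough. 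Moreover, even if the fluxes $\sigma(\rho^\varepsilon)\nabla H^\varepsilon$ converged in $L^2$, this would not imply convergence of the weighted energies $\int\!\!\int\sigma(\rho^\varepsilon)|\nabla H^\varepsilon|^2=\int\!\!\int|\sigma(\rho^\varepsilon)\nabla H^\varepsilon|^2/\sigma(\rho^\varepsilon)$, precisely because $1/\sigma(\rho^\varepsilon)$ is unbounded where $\rho$ touches $0$ or $1$.

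The paper avoids this difficulty altogether. Writing $\mathbf P=\sigma(\rho)\nabla H-\nabla\varphi(\rho)$, $\mathbf P^\lambda=-\nabla\varphi(\lambda)$ and $\mathbf P^\varepsilon=(1-\varepsilon)\mathbf P+\varepsilon\mathbf P^\lambda$, one has the variational bound $I_T(\pi^\varepsilon|\gamma)\le\tfrac12\int\!\!\int\|\mathbf P^\varepsilon+\nabla\varphi(\rho^\varepsilon)\|^2/\sigma(\rho^\varepsilon)$, and the pointwise limit of the integrand is $\sigma(\rho)|\nabla H|^2$. Uniform integrability follows from the single observation that, by convexity of $\|\cdot\|^2$ and concavity of $\chi$, $\|\mathbf P^\varepsilon\|^2/\chi(\rho^\varepsilon)\le\max\{\|\mathbf P\|^2/\chi(\rho),\|\mathbf P^\lambda\|^2/\chi(\lambda)\}$, and the right-hand side lies in $L^1(\Omega_T)$ by Corollary~\ref{rfhe} and~\eqref{f06}. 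This is the missing idea: you need a dominating integrable function uniform in $\varepsilon$, and solving for $H^\varepsilon$ does not supply one.

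Two smaller points: your appeal to ``Section~\ref{sec5}'' for solvability of the elliptic problem for $H^\varepsilon$ is misplaced (that section treats \eqref{f01} and \eqref{f02}, not a weighted Poisson equation), and the regularity claims (strong maximum principle for $\bar\rho$, parabolic strict positivity of $\lambda$) are not established in the paper---though the paper itself implicitly uses the second of these for $\lambda$.
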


\begin{proof}
By the previous lemma, it is enough to show that each path
$\pi(t,du)=\rho(t,u) du$ in $\Pi_1$ can be approximated by paths in
$\Pi_2$.  Fix $\pi$ in $\Pi_1$ and let $\lambda$ be as in the proof of
the previous lemma. For each $0<\varepsilon<1$, let
$\rho^{\varepsilon}=(1-\varepsilon)\rho+\varepsilon\lambda$,
$\pi^\varepsilon (t,du) = \rho^{\varepsilon}(t,u) du$.  Note that $\mc
Q(\pi^{\varepsilon})<\infty$ because $\mc Q$ is convex and both $\mc
Q(\pi)$ and $\mc Q(\lambda)$ are finite.  Hence, $\pi^{\varepsilon}$
belongs to $D_{\gamma}$ since both $\rho$ and $\lambda$ satisfy the
boundary conditions. Moreover, It is clear that $\pi^{\varepsilon}$
converges to $\pi$ as $\varepsilon\downarrow 0$.  By the lower
semicontinuity of $I_T(\cdot|\gamma)$, in order to conclude the proof,
it is enough to show that
\begin{eqnarray}\label{ls2}
\limsup_{N\to\infty}I_T(\pi^{\varepsilon}|\gamma)\leq I_T(\pi|\gamma)\, .
\end{eqnarray}

By Lemma \ref{lem05}, there exists $H\in H^1_0(\sigma(\rho))$ such that
$\rho$ solves the equation \eqref{f05}. Let ${\bf P} =
\sigma(\rho)\nabla H - \nabla\varphi(\rho)$ and ${\bf
  P}^{\lambda}=-\nabla\varphi(\lambda)$. For each $0<\varepsilon<1$,
let ${\bf P}^{\varepsilon} = (1-\varepsilon){\bf P} + \varepsilon {\bf
  P}^{\lambda}$.  Since $\rho$ solves the equation \eqref{f05}, for
every $G\in\mc C^{1,2}_0(\overline{\Omega_T})$,
\begin{equation*}
\int_0^T dt\;\langle{\bf P}^{\varepsilon}_t,\nabla G_t\rangle
=\langle\pi^{\varepsilon}_T,G_T\rangle-\langle\pi^{\varepsilon}_0,G_0\rangle
- \int_0^T dt\; \langle \pi^{\varepsilon}_t,\partial_tG_t\rangle \, . 
\end{equation*}
Hence, by \eqref{rf1}, $I_T(\pi^{\varepsilon}|\gamma)$ is equal to
\begin{equation*}
\sup_{G\in\mc C^{1,2}_0(\overline{\Omega_T})}
\Big\{\int_0^T dt\int_{\Omega}  \big\{
{\bf P}^{\varepsilon} + \nabla \varphi(\rho^{\varepsilon})
\big\} \cdot\nabla G \, du  \;-\; 
\frac{1}{2}\int_0^T dt\int_{\Omega} \sigma(\rho^{\varepsilon})
\Vert\nabla G\Vert^2 \, du \Big\}\;.
\end{equation*}
This expression can be rewritten as
\begin{equation*}
\begin{split}
& \frac{1}{2}\int_0^T dt\int_{\Omega} du\;\frac{\Vert{\bf
    P}^{\varepsilon}+\nabla \varphi(\rho^{\varepsilon}) \Vert^2}
{\sigma(\rho^{\varepsilon})} \\
& \qquad -\frac{1}{2}\inf_G \Big\{\int_0^T dt \int_{\Omega} 
\frac{\Vert{\bf P}^{\varepsilon}+\nabla \varphi(\rho^{\varepsilon})
-\sigma(\rho^{\varepsilon})\nabla G\Vert^2}{\sigma(\rho^{\varepsilon})}
\, du \Big\}   
\end{split}
\end{equation*}
Hence,
\begin{equation*}
I_T(\pi^{\varepsilon}|\gamma) \;\le\; \frac{1}{2}\, 
\int_0^T dt\int_{\Omega}  \frac{\Vert{\bf P}^{\varepsilon}
+\nabla \varphi(\rho^{\varepsilon}) \Vert^2}
{\sigma(\rho^{\varepsilon})}\, du \;\cdot 
\end{equation*}

In view of this inequality and \eqref{f06}, in order to prove
\eqref{ls2}, it is enough to show that
\begin{eqnarray*}
\lim_{\varepsilon\to 0} \int_0^T dt \int_{\Omega} du\;
\frac{\Vert{\bf P}^{\varepsilon}+
\nabla\varphi(\rho^{\varepsilon})\Vert^2}{\sigma(\rho^{\varepsilon})} \,du
\;=\; \int_0^T dt\int_{\Omega} 
\frac{\Vert{\bf P}+\nabla\varphi(\rho)\Vert^2}{\sigma(\rho)}\, du\;\cdot 
\end{eqnarray*}
By the continuity of $\varphi'$, $\sigma$ and from the definition of
${\bf P}^{\varepsilon}$, 
\begin{eqnarray*}
\lim_{\varepsilon \to 0} \frac{\Vert{\bf P}^{\varepsilon} 
+ \nabla\varphi (\rho^{\varepsilon})\Vert^2}
{\sigma(\rho^{\varepsilon})} \;=\;  
\frac{\Vert{\bf P} + \nabla\varphi(\rho)\Vert^2} {\sigma(\rho)} 
\end{eqnarray*}
almost everywhere. Therefore, to prove \eqref{ls2}, it remains to show
the uniform integrability of 
\begin{equation*}
\Big\{\frac{\Vert{\bf P}^{\varepsilon}\Vert^2}
{\chi(\rho^{\varepsilon})} \,:\, \varepsilon>0 \Big\}
\quad\hbox{ and }\quad 
\Big\{\frac{\Vert\nabla\rho^{\varepsilon}\Vert^2}
{\chi(\rho^{\varepsilon})}\,:\, \varepsilon>0 \Big\}\; . 
\end{equation*}

Since $I_T(\pi|\gamma)<\infty$, by \eqref{est2}, \eqref{f06} and
Corollary \ref{rfhe}, the functions $\frac{\Vert{\bf
    P}\Vert^2}{\chi(\rho)}$, $\frac{\Vert{\bf
    P}_{\lambda}\Vert^2}{\chi(\lambda)}$,
$\frac{\Vert\nabla\rho\Vert^2}{\chi(\rho)}$ and
$\frac{\Vert\nabla\lambda\Vert^2}{\chi(\lambda)}$ belong to
$L^1(\Omega_T)$. In particular, the function
\begin{equation*}
g=\max\left\{\frac{\Vert{\bf P}\Vert^2}
{\chi(\rho)},\frac{\Vert{\bf P}_{\lambda}\Vert^2}
{\chi(\lambda)},\frac{\Vert\nabla\rho\Vert^2}{\chi(\rho)},
\frac{\Vert\nabla\lambda\Vert^2}{\chi(\lambda)}\right\}\, ,
\end{equation*}
also belongs to $L^1(\Omega_T)$. By the convexity of
$\Vert\cdot\Vert^2$ an the concavity of $\chi(\cdot)$,
\begin{equation*}
\frac{\Vert{\bf P}^{\varepsilon}\Vert^2}{\chi(\rho^{\varepsilon})} 
\leq \frac{(1-\varepsilon)\Vert{\bf P}\Vert^2 +
\varepsilon\Vert{\bf P}_{\lambda}\Vert^2}
{(1-\varepsilon)\chi(\rho)+\varepsilon\chi(\lambda)}\leq g\, ,
\end{equation*}
which proves the uniform integrability of the family $\frac{\Vert{\bf
    P}^{\varepsilon}\Vert^2}{\chi(\rho^{\varepsilon})}$. The uniform
integrability of the family
$\frac{\Vert\nabla\rho_{\varepsilon}\Vert^2}{\chi(\rho_{\varepsilon})}$
follows from the same estimate with $\nabla\rho_{\varepsilon}$,
$\nabla\rho$ and $\nabla\lambda$ in the place of ${\bf
  P}_{\varepsilon}$, ${\bf P}$ and ${\bf P}_{\lambda}$, respectively.
\end{proof}

Let $\Pi$ be the subset of $\Pi_2$ consisting of all those paths $\pi$
which are solutions of the equation \eqref{f05} for some $H\in\mc
C^{1,2}_0(\overline{\Omega_T})$.

\begin{theorem}
\label{th5}
The set $\Pi$ is $I_T(\cdot|\gamma)$-dense.
\end{theorem}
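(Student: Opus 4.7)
The plan is to invoke the two preceding lemmas to reduce the problem to approximating a path $\pi(t,du)=\rho(t,u)\,du$ in $\Pi_2$ by paths in $\Pi$. Fix such $\pi$ and choose $\delta>0$ and $\varepsilon>0$ such that $\rho$ solves the hydrodynamic equation \eqref{f02} on $[0,\delta]$ and $\varepsilon\le\rho(t,u)\le 1-\varepsilon$ on $[\delta,T]\times\overline\Omega$. By Lemma \ref{lem05} there is $H\in H^1_0(\sigma(\rho))$ such that $\rho$ is a weak solution of \eqref{f05} with external field $H$ and $I_T(\pi|\gamma)=(1/2)\Vert H\Vert^2_{\sigma(\rho)}$; without loss of generality we may take $H\equiv 0$ on $[0,\delta]$ since the rate contribution on this interval vanishes.

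Because $\sigma(\rho)$ is bounded below by a positive constant on $[\delta,T]$, the restriction of $\Vert\cdot\Vert_{\sigma(\rho)}$ to $[\delta,T]$ is equivalent to the standard norm of $L^2([\delta,T],H^1_0(\Omega))$. A routine cutoff and mollification argument then yields a sequence $\{H^n\}\subset\mc C^{1,2}_0(\overline{\Omega_T})$ with $H^n\equiv 0$ on $[0,\delta]$ and $H^n\to H$ in $H^1_0(\sigma(\rho))$. For each $n$, let $\rho^n$ be the unique weak solution of \eqref{f05} with external field $H^n$ and initial datum $\gamma$ (existence and uniqueness follow from Section \ref{sec5} since $H^n\in L^2([0,T],H^1(\Omega))$), and set $\pi^n(t,du)=\rho^n(t,u)\,du$. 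On $[0,\delta]$, since $H^n\equiv 0$, uniqueness for the hydrodynamic equation forces $\rho^n=\rho$, so $\pi^n\in\Pi_1$ automatically.

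The analytic heart of the proof is a stability estimate for \eqref{f05} under perturbation of the external field. Setting $w^n=\rho^n-\rho$, testing the equation for $w^n$ against an appropriate function (e.g., $\varphi(\rho^n)-\varphi(\rho)$) and applying Gronwall's inequality together with $\Vert H^n-H\Vert_{\sigma(\rho)}\to 0$ should give $\rho^n\to\rho$ in $L^2([0,T],H^1(\Omega))$ and, along a subsequence, a.e.\ on $\Omega_T$. A comparison argument for the quasilinear equation \eqref{f05} then keeps $\rho^n$ uniformly bounded away from $\{0,1\}$ on $[\delta,T]$ for large $n$, placing $\pi^n$ in $\Pi_2$ and hence in $\Pi$. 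By Lemma \ref{lem05} applied to $\pi^n$,
\[
I_T(\pi^n|\gamma)\;=\;\tfrac{1}{2}\int_0^T dt\,\langle\sigma(\rho^n_t),\Vert\nabla H^n_t\Vert^2\rangle\,,
\]
and dominated convergence, based on $\sigma(\rho^n)\to\sigma(\rho)$ a.e., the uniform bounds on $\sigma(\rho^n)$ on $[\delta,T]$, and $\nabla H^n\to\nabla H$ in weighted $L^2$, yields $I_T(\pi^n|\gamma)\to(1/2)\Vert H\Vert^2_{\sigma(\rho)}=I_T(\pi|\gamma)$.

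The main obstacle is this stability step: one must propagate the convergence $H^n\to H$, which is only controlled in a $\sigma(\rho)$-weighted $L^2$ norm, to strong convergence of $\rho^n$ to $\rho$ fine enough to pass to the limit in the quadratic form defining $I_T$, while simultaneously showing that $\rho^n$ inherits from $\rho$ a uniform lower bound on $\sigma(\rho^n)$ on $[\delta,T]$. This is precisely the simplification announced in the introduction: the regularity of the approximating paths is obtained for free from the parabolic character of \eqref{f05} and the smoothness of $H^n$, rather than being imposed on the path $\lambda$ directly as in the classical approach.
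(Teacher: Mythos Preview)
Your overall strategy --- approximate the external field $H$ by smooth $H^n$ and let $\rho^n$ solve \eqref{f05} with $H^n$ --- is exactly the paper's, and the reduction to $\pi\in\Pi_2$ via the two preceding lemmas is the same. The divergence is in how you establish $\pi^n\to\pi$.

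You propose a direct stability estimate: test the difference equation for $w^n=\rho^n-\rho$ against something like $\varphi(\rho^n)-\varphi(\rho)$ and apply Gronwall. You flag this yourself as the ``main obstacle'', and it is a genuine gap as written. For this quasilinear equation the cross term $\nabla\cdot\big((\sigma(\rho^n)-\sigma(\rho))\nabla H\big)$, with $H$ only in a weighted $H^1$ space, does not obviously close in a Gronwall loop without already controlling $\rho^n-\rho$ in a strong norm. The paper sidesteps this entirely. From $I_T(\pi^n|\gamma)=\tfrac12\Vert H^n\Vert^2_{\sigma(\rho^n)}\le C_0\int_0^T\!\int_\Omega\Vert\nabla H^n\Vert^2$, which is uniformly bounded because $\sigma(\rho)$ is bounded below on the support of $\nabla H$ and hence $\nabla H^n\to\nabla H$ in \emph{unweighted} $L^2$, Theorem~\ref{th4} (compactness of level sets) yields a subsequence $\pi^{n_k}\to\pi^0$; Corollary~\ref{corls} and Lemma~\ref{lem02} upgrade this to strong $L^2(\Omega_T)$ convergence of the densities, which suffices to pass to the limit in the weak formulation of \eqref{f05} and identify $\pi^0=\pi$ via the uniqueness Lemma~\ref{uniqH}. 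No direct stability estimate is needed --- the hard analysis was already packaged into Theorem~\ref{th4}.

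Two smaller remarks. Your comparison step forcing $\rho^n$ away from $\{0,1\}$ is not needed for the convergence of the rate functions: since $\sigma$ is bounded \emph{above}, a.e.\ convergence of $\sigma(\rho^n)$ (from strong $L^2$ convergence of $\rho^n$) together with $\nabla H^n\to\nabla H$ in $L^2$ already gives $\int\sigma(\rho^n)\Vert\nabla H^n\Vert^2\to\int\sigma(\rho)\Vert\nabla H\Vert^2$ by dominated convergence. And Section~\ref{sec5} only supplies \emph{uniqueness} for \eqref{f05}; existence of $\rho^n$ comes, as for \eqref{f02} in Proposition~\ref{s05}, from the hydrodynamic limit of the process tilted by the smooth $H^n$.
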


\begin{proof}
By the previous lemma, it is enough to show that each path $\pi$ in
$\Pi_2$ can be approximated by paths in $\Pi$. Fix $\pi(t,du)
=\rho(t,u) du$ in $\Pi_2$.  By Lemma \ref{lem05}, there exists $H\in
H^1_{0}(\sigma(\rho))$ such that $\rho$ solves the equation
\eqref{f05}. Since $\pi$ belongs to $\Pi_2\subset \Pi_1$, $\rho$ is the
weak solution of \eqref{f02} in some time interval $[0,2\delta]$ for
some $\delta>0$. In particular, $\nabla H = 0$ a.e in
$[0,2\delta]\times\Omega$. On the other hand, since $\pi$ belongs to
$\Pi_1$, there exists $\epsilon>0$ such that $\epsilon \le
\pi_t(\cdot) \le 1-\epsilon$ for $\delta \le t\le T$.  Therefore,
\begin{equation}
\label{fin}
\int_0^T dt\int_{\Omega} \Vert\nabla H_t(u)\Vert^2 \, du
\;<\; \infty\, .
\end{equation}

Since $H$ belongs to $H^1_{0}(\sigma(\rho))$, there exists a sequence
of functions $\{H^n:\, n\geq 1\}$ in $\mc
C^{1,2}_{0}(\overline{\Omega_T})$ converging to $H$ in
$H^1_0(\sigma(\rho))$. We may assume of course that $\nabla H^n_t
\equiv 0$ in the time interval $[0,\delta]$. In particular,
\begin{eqnarray}
\label{lim3}
\lim_{n\to\infty}\int_0^T dt\int_{\Omega} du\; 
\Vert \nabla H^n_t(u)-\nabla H_t(u)\Vert^2 = 0\, .
\end{eqnarray}

For each integer $n>0$, let $\rho^n$ be the weak solution of
\eqref{f05} with $H^n$ in place of $H$ and set $\pi^n (t,du) =
\rho^n(t,u)du$. By \eqref{f06} and since $\sigma$ is bounded above in
$[0,1]$ by a finite constant,
\begin{equation*}
I_T(\pi^n|\gamma) = \frac{1}{2}\int_0^T\ dt\;
\langle\sigma(\rho^n_t),\Vert\nabla H^n_t\Vert^2\rangle
\leq C_0 \int_0^T dt\int_{\Omega} du\;
\Vert\nabla H^n_t(u)\Vert^2\, .
\end{equation*}
In particular, by \eqref{fin} and \eqref{lim3}, $I_T(\pi^n|\gamma)$ is
uniformly bounded on $n$. Thus, by Theorem \ref{th4}, the sequence
$\pi^n$ is relatively compact in $D([0,T],\mc M)$.

Let $\{\pi^{n_k}:\, k\geq 1\}$ be a subsequence of $\pi^n$ converging
to some $\pi^0$ in $D([0,T],\mc M^0)$. For every $G$ in $\mc
C^{1,2}_0(\overline{\Omega_T})$,
\begin{eqnarray*}
\langle\pi^{n_k}_T,G_T\rangle - \langle\gamma,G_0\rangle - 
\int_0^T dt\;\langle\pi^{n_k}_t,\partial_tG_t\rangle
=  \int_0^T dt\;\langle\varphi(\rho^{n_k}_t),\Delta G_t\rangle \\
- \int_0^T dt \int_{\Gamma}\varphi(b){\bf n_1}(\partial_{u_1}G)dS
- \int_0^T dt\; \langle\sigma(\rho^n_t),
\nabla H^{n_k}_t\cdot\nabla G_t\rangle\, .
\end{eqnarray*}

Letting $k\to\infty$ in this equation, we obtain the same equation
with $\pi^0$ and $H$ in place of $\pi^{n_k}$ and $H^{n_k}$,
respectively, if
\begin{equation}
\label{lim2}
\begin{split}
& \lim_{k\to\infty} \int_0^T dt\;\langle\varphi(\rho^{n_k}_t),\Delta
G_t\rangle \;=\; \int_0^T dt\;\langle\varphi(\rho^{0}_t),\Delta
G_t\rangle\;, \\
&\quad 
\lim_{k\to\infty}\int_0^T dt\; \langle\sigma(\rho^{n_k}_t),
\nabla H^{n_k}_t\cdot\nabla G_t\rangle = 
\int_0^T dt\;\langle\sigma(\rho^0_t),\nabla H_t\cdot\nabla G_t\rangle\, .
\end{split}
\end{equation}

We prove the second claim, the first one being simpler.  Note first
that we can replace $H^{n_k}$ by $H$ in the previous limit, because
$\sigma$ is bounded in $[0,1]$ by some positive constant and
\eqref{lim3} holds. Now, $\rho^{n_k}$ converges to $\rho^0$ weakly in
$L^2(\Omega_T)$ because $\pi^{n_k}$ converges to $\pi^0$ in
$D([0,T],\mc M^0)$. Since $I_T(\pi^n|\gamma)$ is uniformly bounded, by
Corollary \ref{corls} and Lemma \ref{lem02}, $\rho^{n_k}$ converges to
$\rho^0$ strongly in $L^2(\Omega_T)$ which implies \eqref{lim2}. In
particular, since \eqref{fin} holds, by uniqueness of weak solutions
of equation \eqref{f05}, $\pi^0 = \pi$ and we are done.
\end{proof}

\section{Large deviations}

We prove in this section the dynamical large deviations principle for
the empirical measure of boundary driven symmetric exclusion processes
in dimension $d\geq 1$. The proof relies on the results presented in
the previous section and is quite similar to the original one
presented in \cite{kov, dv}. There are just three additional
difficulties.  On the one hand, the lack of explicitly known
stationary states hinders the derivation of the usual estimates of the
entropy and the Dirichlet form, so important in the proof of the
hydrodynamic behaviour.  On the other hand, due to the definition of
the rate function, we have to show that trajectories with infinite
energy can be neglected in the large deviations regime. Finally, since
we are working with the empirical measure, instead of the empirical
density, we need to show that trajectories which are not absolutely
continuous with respect to the Lebesgue measure and whose density is
not bounded by one can also be neglected.  The first two problems have
already been faced and solved. The first one in \cite{LOV, BDGJL} and
the second in \cite{q, blm1}.  The approach here is quite similar, we
thus only sketch the main steps in sake of completeness.

\subsection{Superexponential estimates}

It is well known that one of the main steps in the derivation of the
upper bound is a super-exponential estimate which allows the
replacement of local functions by functionals of the empirical density
in the large deviations regime. Essentially, the problem consists in
bounding expressions such as $\langle V,f^2\rangle_{\mu^N_{ss}}$ in
terms of the Dirichlet form $\langle-N^2\mc L_N
f,f\rangle_{\mu^N_{ss}}$. Here $V$ is a local function and
$\langle\cdot,\cdot\rangle_{\mu^N_{ss}}$ indicates the inner product
with respect to the invariant state $\mu^N_{ss}$. In our context, the
fact that the invariant state is not known explicitly introduces a
technical difficulty.

Let $\beta$ be as in the beginning of section 2. Following \cite{LOV},
\cite{BDGJL}, we use $\nu^N_{\beta(\cdot)}$ as reference measure and
estimate everything with respect to $\nu^N_{\beta(\cdot)}$. However,
since $\nu^N_{\beta(\cdot)}$ is not the invariant state, there are no
reasons for $\langle-N^2\mc L_N f,f\rangle_{\nu^N_{\beta(\cdot)}}$ to
be positive. The next statement shows that this expression is almost
positive.

For each function $f:X_N\to\bb R$, let
\begin{equation*}
D_{N,0}(f) = \sum_{i=1}^d\sum_{x}\int r_{x,x+e_i}(\eta)
\left[f(\eta^{x,x+e_i})-f(\eta)\right]^2 d\nu^N_{\beta(\cdot)}(\eta)\, ,
\end{equation*}
where the second sum is carried over all $x$ such that $x,x+e_i\in
\Omega_N$.

\begin{lemma}
There exists a finite constant $C$ depending only on $\beta$ such that
\begin{equation*}
\langle N^2\mc L_{N,0}f, f\rangle_{\nu^N_{\beta(\cdot)}}
\leq -\frac{N^2}{4}D_{N,0}(f) + CN^d\langle f, 
f\rangle_{\nu^N_{\beta(\cdot)}}\, ,
\end{equation*}
for every function $f:X_N\to\bb R$.
\end{lemma}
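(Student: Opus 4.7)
The strategy is the standard non-reversible symmetrization used when $\nu^N_{\beta(\cdot)}$ is only a reference measure, not the invariant one for $\mc L_{N,0}$. The key \emph{structural} input is that the rate $r_{x,x+e_i}(\eta) = 1 + a\{\eta(x-e_i)+\eta(x+2e_i)\}$ depends on $\eta$ only through sites \emph{outside} the pair $\{x,x+e_i\}$ (and the same holds for the boundary rates). Consequently
\[
r_{x,x+e_i}(\eta^{x,x+e_i}) \;=\; r_{x,x+e_i}(\eta)
\]
for every admissible bond, a fact that will ensure the symmetrization is clean.

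Starting from
\[
\langle \mc L_{N,0}f, f\rangle_{\nu^N_{\beta(\cdot)}}
\;=\; \sum_{i,x}\int r_{x,x+e_i}(\eta)\,[f(\eta^{x,x+e_i})-f(\eta)]\,f(\eta)\,d\nu^N_{\beta(\cdot)}(\eta),
\]
I would write this as half of itself plus half of its image under the change of variable $\eta\mapsto\eta^{x,x+e_i}$. Setting $\theta_{x,i}(\eta) = \nu^N_{\beta(\cdot)}(\eta^{x,x+e_i})/\nu^N_{\beta(\cdot)}(\eta)$ and $\Delta_{x,i}f = f(\eta^{x,x+e_i})-f(\eta)$, and using the flip-invariance of $r_{x,x+e_i}$, I obtain after substituting $f(\eta^{x,x+e_i}) = f(\eta)+\Delta_{x,i}f$ the identity
\[
\langle \mc L_{N,0}f, f\rangle_{\nu^N_{\beta(\cdot)}} \;=\; \tfrac12\sum_{i,x}\int r_{x,x+e_i}\,\bigl\{(1-\theta_{x,i})\,f(\eta)\,\Delta_{x,i}f \;-\; \theta_{x,i}\,(\Delta_{x,i}f)^2\bigr\}\,d\nu^N_{\beta(\cdot)}.
\]
Since $\beta\in\mc C^2(V)$ is bounded away from $0$ and $1$, an explicit computation of the ratio $\theta_{x,i}$ (which equals $1$ whenever $\eta(x)=\eta(x+e_i)$, and is otherwise a ratio of values of $\beta$ at adjacent lattice points) together with a first-order Taylor expansion of $\beta$ yields a constant $C_0=C_0(\beta)$ such that $|1-\theta_{x,i}(\eta)|\leq C_0/N$ and $\theta_{x,i}(\eta)\geq 1-C_0/N$ uniformly in $i,x,\eta$.

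Finally I would apply Young's inequality $|ab|\leq \tfrac{\epsilon}{2}a^2 + \tfrac{1}{2\epsilon}b^2$ to the cross term with $\epsilon = 1+2C_0/N$. The coefficient of $(\Delta_{x,i}f)^2$ then becomes $\tfrac{1}{2\epsilon}-\theta_{x,i} \leq -\tfrac12$ for $N$ large, while the coefficient of $f(\eta)^2$ is bounded by $\tfrac{\epsilon}{2}(1-\theta_{x,i})^2 \leq C'/N^2$. Multiplying through by $N^2$ and summing over $i\in\{1,\dots,d\}$ and the $\mc O(N^d)$ admissible bonds, and using the uniform bound on $r_{x,x+e_i}$, the $(\Delta f)^2$ contribution is dominated by $-(N^2/4)\,D_{N,0}(f)$ and the $f^2$ contribution by $CN^d\langle f,f\rangle_{\nu^N_{\beta(\cdot)}}$, which is the desired inequality. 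The only bookkeeping obstacle is that the naive non-reversibility defect is of order $N^{-1}$ per bond, which after multiplying by $N^2$ and summing over the $N^d$ bonds would contribute a forbidden $N^{d+1}$; using the \emph{squared} factor $(1-\theta_{x,i})^2 = \mc O(N^{-2})$ produced by Young's inequality is precisely what restores the correct power of $N$. For the finitely many small $N$ the inequality is trivial after enlarging $C$, since $\mc L_{N,0}$ has bounded norm on $X_N$.
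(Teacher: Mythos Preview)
Your argument is correct and is exactly the standard ``elementary'' computation the paper omits: symmetrize each bond term via the change of variables $\eta\mapsto\eta^{x,x+e_i}$, exploit the crucial observation that $r_{x,x+e_i}$ does not see the occupation at $\{x,x+e_i\}$ so that it is swap-invariant, and then absorb the non-reversibility defect $1-\theta_{x,i}=O(N^{-1})$ into an $f^2$ term via Young's inequality. Your bookkeeping of the powers of $N$ is also right---it is precisely the \emph{square} $(1-\theta_{x,i})^2=O(N^{-2})$ that compensates the $N^2$ in front and the $O(N^d)$ bonds, which is the only subtle point in this otherwise routine calculation.
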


The proof of this lemma is elementary and is thus omitted. Further, we
may choose $\beta$ for which there exists a constant $\theta>0$ such
that:
\begin{eqnarray*}
\beta(u_1,\check{u}) =  b(-1,\check{u}) & 
\qquad\hbox{ if } \; -1\leq u_1\leq -1+\theta\, , \\ 
\beta(u_1,\check{u}) =  b(1,\check{u})\;\;\, & 
\,\hbox{ if } \;1-\theta\leq u_1\leq 1\, ,
\end{eqnarray*}
for all $\check{u}\in\bb T^{d-1}$. In that case, for every $N$ large
enough, $\nu_{\beta(\cdot)}^N$ is reversible for the process with
generator $\mc L_{N,b}$ and then $\langle -N^2\mc L_{N,b}f,
f\rangle_{\nu^N_{\beta(\cdot)}}$ is positive.

This lemma together with the computation presented in \cite{BKL}, p.
78, for nonreversible processes, permits to prove the
super-exponential estimate.  For a cylinder function $\Psi$ denote the
expectation of $\Psi$ with respect to the Bernoulli product measure
$\nu^N_{\alpha}$ by $\widetilde{\Psi}(\alpha)$:
\begin{equation*}
\widetilde{\Psi}(\alpha) = E^{\nu^N_{\alpha}}[\Psi]\, .
\end{equation*}
For a positive integer $l$ and $x\in\Omega_N$, denote the empirical
mean density on a box of size $2l+1$ centered at $x$ by $\eta^l(x)$:
\begin{equation*}
\eta^l(x) = \frac{1}{|\Lambda_l(x)|}\sum_{y\in\Lambda_l(x)}\eta(y)\, ,
\end{equation*}
where
\begin{equation*}
\Lambda_l(x) = \Lambda_{N,l}(x) = \{y\in\Omega_N:\, |y-x|\leq l\}\, .
\end{equation*}
For each $G\in\mc C(\overline{\Omega_T})$, each cylinder function
$\Psi$ and each $\varepsilon>0$, let
\begin{equation*}
V_{N,\varepsilon}^{G,\Psi}(s,\eta)=\frac{1}{N^d}\sum_{x}
G(s,x/N)\left[\tau_x\Psi(\eta)-
\widetilde{\Psi}(\eta^{\varepsilon N}(x))\right]\, ,
\end{equation*}
where the sum is carried over all $x$ such that the support of
$\tau_x\Psi$ belongs to $\Omega_N$.

For a continuous function $H:[0,T]\times\Gamma\to\bb R$, let
\begin{equation*}
V_{N,H}^{\pm} = \int_0^T ds\;\frac{1}{N^{d-1}}
\sum_{x\in\Gamma_N^{\pm}}V^{\pm}(x,\eta_s)H
\left(s,\frac{x\pm e_1}{N}\right)\, ,
\end{equation*}
where $\Gamma_N^-$, resp. $\Gamma_N^+$, stands for the left, resp.
right, boundary of $\Omega_N$:
\begin{equation*}
\Gamma^{\pm}_N = \{(x_1,\cdots,x_d)\in \Gamma_N: x_1 = \pm(N-1)\}
\end{equation*}
and where
\begin{equation*}
V^{\pm}(x,\eta) = \left[\eta(x)+b\left(\frac{x\pm
      e_1}{N}\right)\right]
\left[\eta(x\mp e_1) - b\left(\frac{x\pm e_1}{N}\right)\right]\, .
\end{equation*}

\begin{proposition}
\label{see}
Fix $G\in\mc C(\overline{\Omega_T})$, $H$ in $\mc
C([0,T]\times\Gamma)$, a cylinder function $\Psi$ and a sequence
$\{\eta^N: N\geq 1\}$ of configurations with $\eta^N$ in $X_N$. For
every $\delta>0$,
\begin{eqnarray*}
&&\limsup_{\varepsilon\to 0}\limsup_{N\to\infty}
\frac{1}{N^d}\, \log \bb{P}_{\eta^N}
\Big[ \, \Big|\int_0^T V_{N,\varepsilon}^{G,\Psi}(s,\eta_s) \, ds \Big|
>\delta\Big] \;=\; -\infty\, , \\
&&\limsup_{N\to\infty}\frac{1}{N^d} \, \bb P_{\eta^N}
\big[ \, |V_{N,H}^{\pm}|>\delta \, \big] \;=\; -\infty\, .
\end{eqnarray*}
\end{proposition}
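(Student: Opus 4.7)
The plan is to follow the classical Feynman-Kac/entropy route to superexponential estimates, as developed in Chapter 10 of \cite{KL}, adapted to our non-equilibrium setting by using the product reference measure $\nu^N_{\beta(\cdot)}$ in place of the unknown stationary state $\mss$, as in \cite{LOV, BDGJL}. I would treat the bulk statement first; the boundary statement follows the same scheme. By the exponential Chebyshev inequality and the bound $e^{|x|}\leq e^{x}+e^{-x}$, it is enough to show, for every $a>0$, that
\begin{equation*}
\limsup_{\varepsilon\to 0}\limsup_{N\to\infty}\frac{1}{N^d}\log\bb E_{\eta^N}\Big[\exp\Big(a N^d\int_0^T V^{G,\Psi}_{N,\varepsilon}(s,\eta_s)\,ds\Big)\Big]\;\leq\; 0.
\end{equation*}
The relative entropy of $\delta_{\eta^N}$ with respect to $\nu^N_{\beta(\cdot)}$ is at most $CN^d$ because $\beta$ is bounded away from $\{0,1\}$. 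The entropy inequality and the Feynman-Kac formula then bound the left-hand side by
\begin{equation*}
C \;+\; \int_0^T\!\! ds\,\sup_f\Big\{a\!\int\! V_{N,\varepsilon}^{G,\Psi}(s,\eta)\, f^2\, d\nu^N_{\beta(\cdot)}\;-\;\frac{1}{N^d}\langle -N^2\mc L_N f, f\rangle_{\nu^N_{\beta(\cdot)}}\Big\},
\end{equation*}
where the supremum runs over densities $f$ with respect to $\nu^N_{\beta(\cdot)}$.

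Next I would control the Dirichlet form using the lemma preceding the proposition: the choice of $\beta$ near $\Gamma$ makes $\nu^N_{\beta(\cdot)}$ reversible for $\mc L_{N,b}$, hence $\langle -N^2\mc L_{N,b}f,f\rangle_{\nu^N_{\beta(\cdot)}}\ge 0$, while the preliminary lemma gives $\langle -N^2\mc L_{N,0}f,f\rangle_{\nu^N_{\beta(\cdot)}} \ge (N^2/4)\, D_{N,0}(f) - CN^d$. The variational problem reduces, up to an additive constant, to
\begin{equation*}
\sup_f\Big\{a\!\int\! V_{N,\varepsilon}^{G,\Psi}(s,\eta)\, f^2\, d\nu^N_{\beta(\cdot)} \;-\; \frac{N^{2-d}}{4}\, D_{N,0}(f)\Big\}.
\end{equation*}
This is precisely the variational problem handled by the classical one-block and two-blocks estimates of Chapter 5 of \cite{KL}, which apply to $\nu^N_{\beta(\cdot)}$ because $\beta$ is smooth: on any cube of side $\varepsilon N$, $\nu^N_{\beta(\cdot)}$ is close in entropy to a homogeneous Bernoulli measure. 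One first replaces $\tau_x\Psi$ by $\widetilde\Psi(\eta^l(x))$ at a fixed block size $l$ (one-block), and then lets $l$ grow with $\varepsilon N$ (two-blocks); each step contributes a remainder vanishing as $N\to\infty$ then $\varepsilon\to 0$, uniformly in $f$.

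For the boundary estimate, the same Chebyshev/Feynman-Kac reduction and the same Dirichlet form control leave one with a variational problem involving $V^\pm(x,\eta)$ and the boundary Dirichlet form $D_{N,b}(f)=\sum_{x\in\Gamma_N}\int C^b(x,\eta)[f(\eta^x)-f(\eta)]^2\, d\nu^N_{\beta(\cdot)}$. Since $\nu^N_{\beta(\cdot)}$ is reversible for the single-site flip at each $x\in\Gamma_N$, a direct integration by parts at each boundary site rewrites $\int V^\pm(x,\eta)\, f^2\, d\nu^N_{\beta(\cdot)}$ in terms of squared differences $[f(\eta^x)-f(\eta)]^2$, which are then absorbed into $D_{N,b}(f)$ at a cost of order $N^{-1}$. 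The main obstacle throughout is the absence of an explicit stationary measure; this is the reason for working with the non-invariant $\nu^N_{\beta(\cdot)}$, and the key point is that the preliminary lemma quantifies the defect of invariance as a harmless additive $O(N^d)$, negligible at the superexponential scale at which we work after letting $\varepsilon\to 0$.
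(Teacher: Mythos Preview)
Your proposal is correct and matches the paper's approach: the paper does not give a detailed proof of Proposition~\ref{see} but simply says that the preliminary Dirichlet-form lemma ``together with the computation presented in \cite{BKL}, p.~78, for nonreversible processes, permits to prove the super-exponential estimate,'' which is exactly the entropy/Feynman--Kac reduction followed by one-block/two-blocks that you outline. The only minor imprecision is in the boundary part: $V^{\pm}(x,\eta)$ is centered in $\eta(x\mp e_1)$ (an interior site), so the integration by parts uses the bulk exchange Dirichlet form over the edge $(x\mp e_1,x)$ together with the boundary flip at $x$, not the boundary flip alone, but this does not affect the argument.
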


For each $\varepsilon> 0$ and $\pi$ in $\mc M$, denote by
$\Xi_\varepsilon (\pi) = \pi^{\varepsilon}$ the absolutely continuous
measure obtained by smoothing the measure $\pi$:
\begin{equation*}
\Xi_\varepsilon (\pi) (dx) \;=\; \pi^{\varepsilon} (dx) \;=\; 
\frac 1{U_\varepsilon} \frac {\pi(\bs \Lambda_\varepsilon(x))}
{|\bs \Lambda_\varepsilon(x)|} \,\, dx\;,
\end{equation*}
where $\bs \Lambda_\varepsilon(x) = \{y\in\Omega : |y-x|\le
\varepsilon\}$, $|A|$ stands for the Lebesgue measure of the set $A$,
and $\{U_\varepsilon : \varepsilon >0\}$ is a strictly decreasing
sequence converging to $1$: $U_\varepsilon >1$, $U_\varepsilon >
U_{\varepsilon'}$ for $\varepsilon>\varepsilon'$, $\lim_{\varepsilon
  \downarrow 0} U_\varepsilon = 1$. Let 
\begin{equation*}
\pi^{N,\varepsilon} \;=\; \Xi_\varepsilon (\pi^N)\;.
\end{equation*}
A simple computation shows that $\pi^{N,\varepsilon}$ belongs to $\mc
M^0$ for $N$ sufficiently large because $U_\varepsilon >1$, and that
for each continuous function $H:\Omega \to \bb R$,
\begin{equation*}
\<\pi^{N,\varepsilon}, H\> \;=\; \frac 1{N^d} \sum_{x\in
  \Omega_N} H(x/N) \eta^{\varepsilon N}(x) \; +\; O(N, \varepsilon)\;,
\end{equation*}
where $O(N, \varepsilon)$ is absolutely bounded by $C_0 \{ N^{-1} +
\varepsilon\}$ for some finite constant $C_0$ depending only on $H$.

For each $H$ in $\mc C_0^{1,2}(\overline{\Omega_T})$ consider the
exponential martingale $M_t^H$ defined by
\begin{eqnarray*}
M_t^H &=& \exp\Big\{N^d \Big[\big\langle\pi_t^N,H_t\big\rangle
-\big\langle\pi_0^N,H_0\big\rangle \\ 
&& \qquad\qquad\; -\; \frac{1}{N^d} \int_0^t
e^{-N^d\langle\pi_s^N,H_s\rangle} \, \big(\partial_s + 
N^2\mc L_N\big) \, e^{N^d\langle\pi_s^N,H_s\rangle} \, ds \Big]\Big\}\, .
\end{eqnarray*}
Recall from subsection 2.2 the definition of the functional $\hat
J_H$. An elementary computation shows that
\begin{eqnarray}
\label{mart}
M_T^H = \exp\left\{N^d\left[\hat J_{H}(\pi^{N, \varepsilon})
+\bb V_{N,\varepsilon}^H + c^1_H(\varepsilon) 
+ c^2_H(N^{-1})\right]\right\}\, .
\end{eqnarray}
In this formula, 
\begin{equation*}
\begin{split}
\bb V_{N,\varepsilon}^H \; &=\; -\sum_{i=1}^d\int_0^T 
V_{N,\varepsilon}^{\partial_{u_i}^2 H,h_i}(s,\eta_s) \, ds
- \frac{1}{2}\sum_{i=1}^d \int_0^T 
V_{N,\varepsilon}^{(\partial_{u_i}H)^2,g_i}(s,\eta_s) \, ds \\
\; & +\; a \,V^+_{N,\partial_{u_1}H}
\; -\; a\, V^-_{N,\partial_{u_1}H} 
\;+\; \langle\pi^N_0,H_0\rangle - \langle\gamma,H_0\rangle\, ;
\end{split}
\end{equation*}
the cylinder functions $h_i$, $g_i$ are given by
\begin{equation*}
\begin{split}
& h_i(\eta) \;=\; \eta(0) \;+\;  a \Big\{ \eta(0)[\eta(-e_i)+\eta(e_i)] 
- \eta(-e_i)\eta(e_i) \Big\}\, , \\
&\quad g_i(\eta) \;=\; r_{0,e_i}(\eta) \, [\eta(e_i)-\eta(0)]^2\; ;
\end{split}
\end{equation*}
and $c^j_H:\bb R_+ \to\bb R$, $j=1,2$, are functions depending only on
$H$ such that $c^j_H(\delta)$ converges to $0$ as $\delta\downarrow
0$.  In particular, the martingale $M_T^H$ is bounded by
$\exp\big\{C(H,T)N^d\big\}$ for some finite constant $C(H,T)$
depending only on $H$ and $T$. Therefore, Proposition \ref{see} holds
for $\bb P_{\eta^N}^H = \bb P_{\eta^N}M_T^H$ in place of $\bb
P_{\eta^N}$.

\subsection{Energy estimates}

To exclude paths with infinite energy in the large deviations regime,
we need an energy estimate. We state first the following technical
result.
\begin{lemma}
\label{est6}
There exists a finite constant $C_0$, depending on $T$, such that for
every $G$ in $C^{\infty}_c(\Omega_T)$, every integer $1\leq i\leq d$
and every sequence $\{\eta^N: N\geq 1\}$ of configurations with
$\eta^N$ in $X_N$,
\begin{eqnarray*}
\limsup_{N\to\infty}\frac{1}{N^d}\log\bb E_{\eta^N}
\Big[\exp\Big\{N^d\int_0^Tdt\; \langle\pi^N_t,
\partial_{u_i}G\rangle\Big\}\Big] \; \leq \; 
C_0\Big\{1+\int_0^T \Vert G_t\Vert^2_2 \, dt \Big\}\; .
\end{eqnarray*}
\end{lemma}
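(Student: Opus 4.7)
The plan is to apply the Feynman--Kac formula with reference measure $\nu^N_{\beta(\cdot)}$, perform a discrete integration by parts in the spatial variable, and then absorb the resulting linear functional into the Dirichlet form by a weighted Schwarz inequality. Writing $V_t(\eta) = \sum_x \eta(x)\, \partial_{u_i}G_t(x/N)$, so that $N^d\langle \pi^N_t,\partial_{u_i}G_t\rangle = V_t(\eta_t)$, the Feynman--Kac bound combined with Cauchy--Schwarz in $L^2(\nu^N_{\beta(\cdot)})$ gives
\begin{equation*}
\frac{1}{N^d}\log \bb E_{\eta^N}\Big[\exp\!\Big(\!\int_0^T V_t(\eta_t)\,dt\Big)\Big] \;\leq\; \frac{1}{N^d}\log \frac{1}{\nu^N_{\beta(\cdot)}(\eta^N)} \;+\; \int_0^T \frac{\Lambda_N(V_t)}{N^d}\,dt,
\end{equation*}
where $\Lambda_N(V_t)$ is the top eigenvalue, in $L^2(\nu^N_{\beta(\cdot)})$, of the symmetric part of $N^2\mc L_N + V_t$. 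The first term is $O(1)$ uniformly in $\eta^N$ because $\beta$ is bounded away from $0$ and $1$. By the almost-positivity lemma applied to $\mc L_{N,0}$, together with the reversibility of $\mc L_{N,b}$ for $\nu^N_{\beta(\cdot)}$ guaranteed by the prescribed choice of $\beta$ near $\Gamma$,
\begin{equation*}
\Lambda_N(V_t) \;\leq\; C N^d \;+\; \sup_{\|f\|_2=1}\Big\{\!\int V_t\, f^2\, d\nu^N_{\beta(\cdot)} \;-\; \tfrac{N^2}{4}\, D_{N,0}(f)\Big\}.
\end{equation*}

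To bound the supremum, since $G_t\in\mc C_c^\infty(\Omega_T)$, for $N$ large a discrete summation by parts gives
\begin{equation*}
V_t(\eta) \;=\; N\sum_x G_t(x/N)\,[\eta(x-e_i)-\eta(x)] \;+\; o(N^d),
\end{equation*}
with no boundary contribution since $\mathrm{supp}\,G_t$ is strictly interior. For each bond, the change of variables $\eta\leftrightarrow\eta^{x-e_i,x}$ under $\nu^N_{\beta(\cdot)}$ produces a Radon--Nikodym factor $1+O(1/N)$ by smoothness of $\beta$, yielding
\begin{align*}
&\int [\eta(x-e_i)-\eta(x)]\,f^2(\eta)\,d\nu^N_{\beta(\cdot)} \\
&\qquad =\; \tfrac{1}{2}\int [\eta(x-e_i)-\eta(x)]\bigl[f(\eta)-f(\eta^{x-e_i,x})\bigr]\bigl[f(\eta)+f(\eta^{x-e_i,x})\bigr]\,d\nu^N_{\beta(\cdot)} \;+\; O(N^{-1})\|f\|_2^2.
\end{align*}
Now apply $2|ab|\le A^{-1}a^2+A\,b^2$ with $A=N^2/2$, $a=f(\eta)-f(\eta^{x-e_i,x})$ and $b$ equal to the remaining factor. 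Using $|\eta(x-e_i)-\eta(x)|\le 1$, the uniform lower bound $r_{x-e_i,x}\ge c>0$, $[f(\eta)+f(\eta^{x-e_i,x})]^2\le 2[f^2(\eta)+f^2(\eta^{x-e_i,x})]$ and the Riemann-sum estimate $N^{-d}\sum_x G_t(x/N)^2\to\|G_t\|_2^2$, summation in $x$ yields
\begin{equation*}
\Big|\!\int V_t\, f^2\, d\nu^N_{\beta(\cdot)}\Big| \;\leq\; \tfrac{N^2}{4}\,D_{N,0}(f) \;+\; C\, N^d\bigl(1+\|G_t\|_2^2\bigr).
\end{equation*}
Combining with the first step and integrating over $t\in[0,T]$ produces the claimed bound, with $C_0$ depending only on $T$ and on the reference profile $\beta$.

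The main subtlety lies in the reduction to a Rayleigh-quotient variational expression: because $\mc L_N$ is not self-adjoint in $L^2(\nu^N_{\beta(\cdot)})$, Feynman--Kac must be applied to its symmetric part, and it is here that the almost-positivity lemma (plus the reversibility of $\mc L_{N,b}$ ensured by the special choice of $\beta$) is indispensable in order to dominate the non-reversible part by an $O(N^d)$ additive constant together with $-\tfrac{1}{4}D_{N,0}$. A secondary technical point is that the bulk rates $r_{x,x+e_i}$ are $a$-dependent rather than constant; since they remain uniformly bounded above and below for $a>-\tfrac{1}{2}$, they only affect numerical constants, and the argument is otherwise identical to the simple symmetric exclusion case.
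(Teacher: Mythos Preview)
Your proof is correct and follows exactly the standard route the paper has in mind: the paper does not give a proof here but refers to Lemma~A.1.1 of \cite{KLO}, and your argument---Feynman--Kac with reference measure $\nu^N_{\beta(\cdot)}$, reduction to the Rayleigh quotient of the symmetrized generator via the almost-positivity lemma (plus reversibility of $\mc L_{N,b}$), discrete summation by parts, and a weighted Schwarz inequality to trade the linear functional against $\tfrac{N^2}{4}D_{N,0}(f)$---is precisely that argument adapted to the present model. Your remarks on the two genuine subtleties (non-self-adjointness of $\mc L_N$ and the $a$-dependent rates bounded away from zero for $a>-1/2$) are to the point.
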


The proof of this proposition is similar to the one of Lemma A.1.1 in
\cite{KLO}.

Fix throughout the rest of the subsection a constant $C_0$ satisfying
the statement of Lemma \ref{est6}. For each $G$ in $\mc
C^{\infty}_c(\Omega_T)$ and each integer $1\leq i\leq d$, let
$\tilde{\mc Q}_i^G: D([0,T],\mc M)\to \bb R$ be the function given by
\begin{equation*}
\tilde{\mc Q}_i^G(\pi) =
\int_0^Tdt\;\langle\pi_t,\partial_{u_i}G_t\rangle
-C_0\int_0^Tdt\int_{\Omega} du\;G(t,u)^2 \;.
\end{equation*}

Notice that
\begin{eqnarray}
\label{f07}
\sup_{G\in\mc C^{\infty}_c(\Omega_T)}\left\{\tilde{\mc
    Q}_i^G(\pi)\right\} = \frac{\mc Q_i(\pi)}{4C_0}\, . 
\end{eqnarray}

Fix a sequence $\{G_k: k\geq 1\}$ of smooth functions dense in
$L^2([0,T], H^1(\Omega))$. For any positive integers $r,l$, let
\begin{equation*}
B_{r,l} = \Big\{\pi\in D([0,T],\mc M): \,\max_{\substack{1\leq k\leq
      r\\ 1\leq i\leq d}} \tilde{\mc Q}^{G_k}_i(\pi) \leq l\Big\}\, . 
\end{equation*}
Since, for fixed $G$ in $\mc C^{\infty}_c(\Omega_T)$ and $1\leq i\leq
d$ integer, the function $\tilde{\mc Q}_i^G$ is continuous, $B_{r,l}$
is a closed subset of $D([0,T],\mc M)$.

\begin{lemma}
\label{est5}
There exists a finite constant $C_0$, depending on $T$,  such that
for any positive integers $r,l$ and any sequence $\{\eta^N: N\geq 1\}$
of configurations with $\eta^N$ in $X_N$,
\begin{equation*}
\limsup_{N\to\infty} \frac{1}{N^d} 
\log Q_{\eta^N}\left[(B_{r,l})^c\right] \leq -l+C_0\, .
\end{equation*}
\end{lemma}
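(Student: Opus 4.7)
The plan is to combine Lemma \ref{est6} with the exponential Chebyshev inequality and a union bound, exactly in the spirit of the standard large deviations upper bound for energy-type functionals.

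First I would fix indices $1\le k\le r$ and $1\le i\le d$ and estimate the probability of the single bad event $\{\tilde{\mc Q}_i^{G_k}(\pi^N) > l\}$. By the definition of $\tilde{\mc Q}_i^{G_k}$, this event coincides with
\begin{equation*}
\Big\{ \int_0^T \<\pi_t^N,\partial_{u_i}G_k\>\,dt \;>\; l \;+\; C_0\int_0^T \|G_{k,t}\|_2^2\,dt \Big\}\,.
\end{equation*}
Applying the exponential Chebyshev inequality with rate $N^d$, I obtain
\begin{equation*}
\bb P_{\eta^N}\bigl[\tilde{\mc Q}_i^{G_k}(\pi^N) > l\bigr]
\;\le\; \exp\Big\{-N^d l - N^d C_0\!\int_0^T\!\|G_{k,t}\|_2^2 dt\Big\}\,
\bb E_{\eta^N}\!\Big[\exp\Big\{N^d\!\int_0^T\<\pi_t^N,\partial_{u_i}G_k\>dt\Big\}\Big].
\end{equation*}
Taking $\frac{1}{N^d}\log$, using Lemma \ref{est6}, and passing to the $\limsup$, the two terms involving $C_0 \int \|G_{k,t}\|_2^2\,dt$ cancel, yielding
\begin{equation*}
\limsup_{N\to\infty}\frac{1}{N^d}\log \bb P_{\eta^N}\bigl[\tilde{\mc Q}_i^{G_k}(\pi^N) > l\bigr] \;\le\; -l + C_0 \;.
\end{equation*}

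Next I would pass from single events to the complement of $B_{r,l}$ by a union bound. Writing
\begin{equation*}
(B_{r,l})^c \;=\; \bigcup_{k=1}^{r}\bigcup_{i=1}^{d} \bigl\{\tilde{\mc Q}_i^{G_k}(\pi^N) > l\bigr\}\,,
\end{equation*}
and noting that this is a union of a fixed number $rd$ of events (independent of $N$), the elementary identity
\begin{equation*}
\limsup_{N\to\infty}\frac{1}{N^d}\log\sum_{j=1}^{M} a_N^{(j)} \;=\; \max_{1\le j\le M}\limsup_{N\to\infty}\frac{1}{N^d}\log a_N^{(j)}
\end{equation*}
gives $\limsup_{N\to\infty}\frac{1}{N^d}\log Q_{\eta^N}[(B_{r,l})^c] \le -l + C_0$, as desired.

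There is no real obstacle here: the only mild point to keep in mind is that one must use the \emph{same} constant $C_0$ in the definition of $\tilde{\mc Q}_i^G$ as the one produced by Lemma \ref{est6}, which is why the bound $-l + C_0$ comes out clean after the cancellation of the $\|G_{k,t}\|_2^2$ terms. The closedness of $B_{r,l}$ in $D([0,T],\mc M)$ (already noted before the statement) is what makes this estimate useful when later applied in the large deviations upper bound, where one intersects a closed set $\mc C$ with $B_{r,l}$ to restrict attention to paths of bounded energy before taking $r,l\to\infty$.
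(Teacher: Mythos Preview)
Your proof is correct and follows essentially the same approach as the paper: an exponential Chebyshev inequality combined with Lemma \ref{est6} for each pair $(k,i)$, followed by the union bound handled via the elementary identity \eqref{ls}. The paper's proof is just a terser version of what you wrote.
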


\begin{proof}
For integers $1\leq k\leq r$ and $1\leq i\leq d$, by Chebychev
inequality and by Lemma \ref{est6},
\begin{equation*}
\limsup_{N\to\infty} \frac{1}{N^d} 
\log \bb P_{\eta^N}\left[\tilde{\mc Q}_i^{G_k} > l\right] \leq -l +C_0\,.
\end{equation*}
Hence, from
\begin{eqnarray}
\label{ls}
\limsup_{N\to\infty} \frac{1}{N^d}\log(a_N+b_N) 
\leq \max\left\{\limsup_{N\to\infty} 
\frac{1}{N^d}\log a_N,\limsup_{N\to\infty} \frac{1}{N^d}\log b_N\right\}\, ,
\end{eqnarray}
we obtain the desired inequality.
\end{proof}

\subsection{Upper Bound}

Fix a sequence $\{F_k: k\geq 1\}$ of smooth nonnegative functions
dense in $\mc C(\overline{\Omega})$ for the uniform topology.  For
$k\ge 1$ and $\delta>0$, let
\begin{equation*}
D_{k,\delta} = \Big\{\pi\in D([0,T],\mc M): \, 0\le \<\pi_t , F_k\>
\le \int_{\Omega} F_k(x) \, dx  \,+\, C_k \delta 
\;,\, 0\le t\le T \Big\} \, ,
\end{equation*}
where $C_k = \Vert \nabla F_k \Vert_\infty$ and $\nabla F$ is the
gradient of $F$. Clearly, the set $D_{k,\delta}$, $k\ge 1$, $\delta
>0$, is a closed subset of $D([0,T],\mc M)$. Moreover, if
\begin{equation*}
E_{m,\delta} \;=\; \bigcap_{k=1}^m D_{k,\delta}\;,
\end{equation*}
we have that $D([0,T],\mc M^0) = \cap_{n\ge 1} \cap_{m\ge 1}
E_{m,1/n}$. Note, finally, that for all $m\ge 1$, $\delta>0$,
\begin{equation}
\label{ff01}
\pi^{N,\varepsilon} \text{ belongs to $E_{m,\delta}$ for $N$ sufficiently
large.}
\end{equation}
\smallskip

Fix a sequence of configurations $\{\eta^N: N\geq 1\}$ with $\eta^N$
in $X_N$ and such that $\pi^N(\eta^N)$ converges to $\gamma(u)du$ in
$\mc M$. Let $A$ be a subset of $D([0,T],\mc M)$,
\begin{equation*}
\frac{1}{N^d}\log\bb P_{\eta^N}\left[\pi^N\in A\right] 
= \frac{1}{N^d}\log \bb E_{\eta^N}\left[M_T^H \, (M_T^H)^{-1}
\, {\bf 1} \{\pi^N\in A\}\right]\, .
\end{equation*}
Maximizing over $\pi^N$ in $A$, we get from \eqref{mart} that the last
term is bounded above by
\begin{equation*}
-\inf_{\pi\in A} \hat J_H(\pi^{\varepsilon})
+\frac{1}{N^d}\log\bb E_{\eta^N}
\Big[M_T^H \, e^{-N^d\bb V_{N,\varepsilon}^H}\Big] 
- c^1_H(\varepsilon)-c^2_H(N^{-1})\, .
\end{equation*}
Since $\pi^N(\eta^N)$ converges to $\gamma(u)du$ in $\mc M$ and since
Proposition \ref{see} holds for $\bb P_{\eta^N}^H = \bb
P_{\eta^N}M_T^H$ in place of $\bb P_{\eta^N}$, the second term of the
previous expression is bounded above by some $C_H(\varepsilon, N)$
such that
\begin{equation*}
\limsup_{\varepsilon\to 0}\limsup_{N\to \infty}
C_H(\varepsilon, N) = 0\, .
\end{equation*}
Hence, for every $\varepsilon>0$, and every $H$ in $\mc
C^{1,2}_0(\overline{\Omega_T})$,
\begin{eqnarray}
\label{est7}
\limsup_{N\to\infty}\frac{1}{N^d}\log\bb P_{\eta^N}[A]
\leq -\inf_{\pi\in A} \hat J_H(\pi^{\varepsilon})+C'_H(\varepsilon)\, ,
\end{eqnarray}
where $\displaystyle \lim_{\varepsilon\to 0}C'_H(\varepsilon)=0$.

For each $H\in \mc C_0^{1,2}(\overline{\Omega_T})$, each
$\varepsilon>0$ and any $r,l, m, n \in\bb Z_+$, let
$J_{H,\varepsilon}^{r,l,m,n}:D([0,T],\mc M)\to\bb R\cup\{\infty\}$ be
the functional given by
\begin{equation*}
J_{H,\varepsilon}^{r,l,m,n}(\pi) = 
\begin{cases}
\hat J_H(\pi^{\varepsilon}) & \hbox{ if } \pi\in B_{r,l} \cap
E_{m,1/n} \, ,\\
+\infty & \hbox{ otherwise } .
\end{cases}
\end{equation*}
This functional is lower semicontinuous because so is $\hat J_H \circ
\Xi_\varepsilon$ and because $B_{r,l}$, $E_{m,1/n}$ are closed subsets
of $D([0,T],\mc M)$.

Let $\mc O$ be an open subset of $D([0,T],\mc M)$. By Lemma
\ref{est5}, \eqref{ls}, \eqref{ff01} and \eqref{est7},
\begin{eqnarray*}
\limsup_{N\to\infty}\frac{1}{N^d}\log Q_{\eta^N}[\mc O] 
& \leq & \max\Big\{\limsup_{N\to\infty}\frac{1}{N^d}
\log Q_{\eta^N}[\mc O\cap B_{r,l} \cap E_{m,1/n}] \, , \\ 
&& \qquad\qquad\qquad\qquad
\;\limsup_{N\to\infty}\frac{1}{N^d}\log Q_{\eta^N}[(B_{r,l})^c]\Big\}
\\ & \leq & \max\Big\{-\inf_{\pi\in \mc O\cap B_{r,l} \cap E_{m,1/n}}
\hat J_H(\pi^{\varepsilon}) + C'_H(\varepsilon)\, , \,-l+C_0\Big\}
\\ & = & - \inf_{\pi\in\mc O} L_{H,\varepsilon}^{r,l,m,n}(\pi) \, ,
\end{eqnarray*}
where
\begin{equation*}
L_{H,\varepsilon}^{r,l,m,n}(\pi)=\min \left\{J_{H,\varepsilon}^{r,l,m,n}(\pi)
- C'_H(\varepsilon)\, , \,l- C_0\right\}\, .
\end{equation*}
In particular,
\begin{equation*}
\limsup_{N\to\infty}\frac{1}{N^d}\log Q_{\eta^N}[\mc O]
\leq - \sup_{H,\varepsilon,r,l,m,n}\;\inf_{\pi\in\mc O}
L_{H,\varepsilon}^{r,l,m,n}(\pi)\, .
\end{equation*}

Note that, for each $H\in \mc C_0^{1,2}(\overline{\Omega_T})$, each
$\varepsilon>0$ and $r,l,m,n \in\bb Z_+$, the functional
$L_{H,\varepsilon}^{r,l,m,n}$ is lower semicontinuous. Then, by Lemma
A2.3.3 in \cite{KL}, for each compact subset $\mc K$ of $D([0,T],\mc
M)$,
\begin{eqnarray*}
\limsup_{N\to\infty}\frac{1}{N^d}\log Q_{\eta^N}[\mc K] 
\;\leq\; - \inf_{\pi\in\mc K}\;\sup_{H,\varepsilon,r,l,m,n}
L_{H,\varepsilon}^{r,l,m,n}(\pi) \, .
\end{eqnarray*}
By \eqref{f07} and since $D([0,T],\mc M^0) = \cap_{n\ge 1}
\cap_{m\ge 1} E_{m,1/n}$,
\begin{equation*}
\begin{split}
& \limsup_{\varepsilon\to 0}\limsup_{l\to\infty}
\limsup_{r\to\infty}\limsup_{m\to\infty}\limsup_{n\to\infty} 
L_{H,\varepsilon}^{r,l,m,n}(\pi) \; = \\
&\qquad\qquad\qquad\qquad\qquad 
\begin{cases}
\hat J_H(\pi) & \hbox{ if } \mc Q(\pi)<\infty \text{ and } \pi \in
D([0,T],\mc M^0)\, ,\\
+\infty & \hbox{ otherwise }.
\end{cases}
\end{split}
\end{equation*}
This result and the last inequality imply the upper bound for compact
sets because $\hat J_H$ and $J_H$ coincide on $D([0,T],\mc M^0)$.  To
pass from compact sets to closed sets, we have to obtain exponential
tightness for the sequence $\{Q_{\eta^N}\}$. This means that there
exists a sequence of compact sets $\{\mc K_n : \,n\geq 1\}$ in
$D([0,T],\cm)$ such that
\begin{equation*}
\limsup_{N\to\infty}\frac{1}{N^d}\log Q_{\eta^N}({\mc K_n}^c)\leq -n\, .
\end{equation*}
The proof presented in \cite{B} for the non interacting zero range
process is easily adapted to our context.

\subsection{Lower Bound}

The proof of the lower bound is similar to the one in the convex
periodic case. We just sketch it and refer to \cite{KL}, section 10.5.
Fix a path $\pi$ in $\Pi$ and let $H\in\mc
C^{1,2}_0(\overline{\Omega_T})$ be such that $\pi$ is the weak
solution of equation \eqref{f05}. Recall from the previous section the
definition of the martingale $M_t^H$ and denote by $\bb P^H_{\eta^N}$
the probability measure on $D([0,T],X_N)$ given by $\bb
P_{\eta^N}^H[A] = \bb E_{\eta^N}[M_T^H \mb 1 \{A\}]$. Under $\bb
P^H_{\eta^N}$ and for each $0\leq t\leq T$, the empirical measure
$\pi^N_t$ converges in probability to $\pi_t$. Further,
\begin{equation*}
\lim_{N\to\infty}\frac{1}{N^d}H
\left(\bb P^H_{\eta^N}\big|\bb P_{\eta^N}\right) = I_T(\pi|\gamma)\, ,
\end{equation*}
where $H(\mu|\nu)$ stands for the relative entropy of $\mu$ with
respect to $\nu$. From these two results we can obtain that for every
open set $\mc O\subset D([0,T],\mc M)$ which contains $\pi$,
\begin{equation*}
\liminf_{N\to\infty}\frac{1}{N^d}\log
\bb P_{\eta^N}\big[\mc O\big]\geq -I_T(\pi|\gamma)\, .
\end{equation*}
The lower bound follows from this and the $I_T(\cdot|\gamma)$-density
of $\Pi$ established in Theorem \ref{th5}.

\section{Existence and uniqueness of weak solutions}
\label{sec5}

We prove in this section existence and uniqueness of weak solutions of
the boundary value problems \eqref{f01} and \eqref{f02}, as well as
some properties of the solutions. We start with the parabolic
differential equation.

\begin{proposition}
\label{s05}
Let $\rho_0 :\overline{\Omega} \to [0,1]$ be a measurable function.
There exists a unique weak solution of \eqref{f02}.
\end{proposition}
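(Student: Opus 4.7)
Let $\rho_1,\rho_2$ be two weak solutions with the same initial datum $\rho_0$. Since both have trace $b$ on $\Gamma$, the difference $\xi_t = \rho_{1,t}-\rho_{2,t}$ lies in $L^2([0,T], H_0^1(\Omega))$; moreover the weak formulation (H2) gives $\partial_t\xi\in L^2([0,T], H^{-1}(\Omega))$ with $\partial_t\xi_t = \Delta(\varphi(\rho_{1,t})-\varphi(\rho_{2,t}))$ in the distributional sense (the boundary terms cancel). The plan is to use the $H^{-1}$-duality method: let $\psi_t=(-\Delta_D)^{-1}\xi_t$, the solution of $-\Delta\psi_t=\xi_t$ with zero trace. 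Testing the equation against $\psi_t$ and applying the chain rule for functions in $L^2([0,T], H_0^1)\cap H^1([0,T], H^{-1})$ yields
$$\frac{1}{2}\frac{d}{dt}\|\xi_t\|_{H^{-1}}^2 \;=\; -\,\<\varphi(\rho_{1,t})-\varphi(\rho_{2,t}),\,\xi_t\>_2 \;\le\; 0,$$
since $\varphi$ is strictly increasing on $[0,1]$. Integrating in time and using $\xi_0=0$ forces $\rho_1\equiv\rho_2$.

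\textbf{Existence.} I would use a Galerkin scheme combined with the smooth lifting $\beta$ of the boundary data provided at the start of Section \ref{sec2}. Writing $\rho=\beta+u$, the unknown $u$ must satisfy $u|_\Gamma=0$, $u(0,\cdot)=\rho_0-\beta$, and $\partial_tu=\Delta\varphi(\beta+u)$. To work with a globally Lipschitz nonlinearity and a uniformly positive diffusivity, extend $\varphi$ smoothly and monotonically to $\tilde\varphi:\R\to\R$ with $\tilde\varphi'\ge c_0>0$ everywhere. Let $\{e_k\}$ be the $L^2(\Omega)$-orthonormal basis of Dirichlet eigenfunctions of $-\Delta$, and seek approximations $u^n(t)=\sum_{k=1}^n c^n_k(t)e_k$ solving the finite-dimensional ODE system obtained by projecting the weak equation onto $\mathrm{span}\{e_1,\dots,e_n\}$; Cauchy--Lipschitz theory yields local-in-time solutions, which are global thanks to the a priori bounds below.

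\textbf{A priori bounds and passage to the limit.} Testing the Galerkin equation against $u^n$, using $\tilde\varphi'\ge c_0$, Poincar\'e's inequality, and a routine treatment of the forcing produced by $\beta$ (which lies in $\mc C^2(V)$ with values in $(0,1)$), yields uniform bounds
$$\sup_n \Big(\|u^n\|_{L^\infty([0,T], L^2)} \;+\; \|u^n\|_{L^2([0,T], H_0^1)}\Big) \;<\; \infty,$$
and the equation itself then bounds $\partial_tu^n$ in $L^2([0,T], H^{-1}(\Omega))$. By the Aubin--Lions type compactness stated in Lemma \ref{lem02}, a subsequence converges strongly in $L^2(\Omega_T)$ and pointwise a.e., which is enough to pass to the limit in the nonlinear term $\tilde\varphi(\beta+u^n)$ and produce a weak solution $\rho$ of the equation with $\tilde\varphi$ in place of $\varphi$.

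\textbf{Maximum principle.} The main remaining obstacle is the bound $0\le\rho\le 1$, which identifies the modified equation with \eqref{f02}. Since $b\in(0,1)$, the truncations $(\rho-1)_+$ and $(-\rho)_+$ have zero trace and lie in $L^2([0,T], H_0^1(\Omega))$; testing the equation against them, justified by Stampacchia's chain rule using the regularity $\rho-\beta\in L^2([0,T], H_0^1)$ with $\partial_t\rho\in L^2([0,T], H^{-1})$, together with $\tilde\varphi'\ge c_0$ and the vanishing of the initial truncations, gives $\|(\rho_t-1)_+\|_2^2\le 0$ and $\|(-\rho_t)_+\|_2^2\le 0$ for all $t\in[0,T]$, whence $0\le\rho\le 1$ and the proof is complete.
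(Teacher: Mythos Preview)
Your proof is correct but follows an entirely different route from the paper's. For existence, the paper does no PDE construction whatsoever: it simply invokes the probabilistic machinery of Section~\ref{sec3}, observing that for any initial profile $\rho_0$ one may pick measures $\mu^N$ associated to it in the sense of \eqref{f03}, and then tightness of the induced laws $Q^N$ on $D([0,T],\mc M)$ together with the identification of limit points yields a weak solution ``for free.'' Your Galerkin/Aubin--Lions/Stampacchia argument is the standard self-contained PDE alternative; it does more work but does not rely on the particle system. For uniqueness, the paper proves the $L^1$-contraction $t\mapsto\|\rho^1_t-\rho^2_t\|_1$ is nonincreasing (Lemma~\ref{lem1-ann}), by testing against the smoothed sign function $R_\delta'$ and using that $\varphi'$ is Lipschitz. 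Your $H^{-1}$ duality argument is the other classical route and suffices for uniqueness, but note that the paper actually needs the stronger $L^1$-contraction elsewhere: it is recycled (with $R_\delta^+$) to obtain the comparison Lemma~\ref{lembis-ann} and then Lemma~\ref{s02}, both of which are essential for hydrostatics and for Proposition~\ref{s04}. One minor gap in your sketch: you assert that weak solutions have trace $b$ on $\Gamma$, but this is not part of the definition and must be extracted from the boundary term in ({\bf H2}) by an argument like the one in Lemma~\ref{lem01}.
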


\begin{proof}
Existence of weak solutions of \eqref{f02} is warranted by the
tightness of the sequence $\Qss^N$ proved in Section \ref{sec3}.
Indeed, fix a profile $\rho_0:\Omega\to [0,1]$ and consider a sequence
$\{\mu^N : N\ge 1\}$ of probability measures in $\mc M$ associated to
$\rho_0$ in the sense \eqref{f03}. Fix $T>0$ and denote by $Q^N$ the
probability measure on $D([0,T], \mc M)$ induced by the measure
$\mu^N$ and the process $\pi^N_t$. Repeating the arguments of Section
\ref{sec3}, one can prove that the sequence $\{Q^N:N\ge 1\}$ is tight
and that any limit point of $\{Q^N:N\ge 1\}$ is concentrated on weak
solutions of \eqref{f03}. This proves existence. Uniqueness follows
from Lemma \ref{lem1-ann} below.
\end{proof}

Denote by $\Vert \cdot \Vert_1$ the $L^1(\Omega)$ norm. Next lemma
states that the $L^1(\Omega)$-norm of the difference of two weak
solutions of the boundary value problem \eqref{f02} decreases in time: 

\begin{lemma}
\label{lem1-ann}
Fix two profiles $\rho_0^1$, $\rho_0^2: \Omega \to [0,1]$.  Let
$\rho^j$, $j=1$, $2$, be weak solutions of \eqref{f02} with initial
condition $\rho_0^j$. Then, $\|\rho_t^1 -\rho_t^2\|_1$ decreases in
time. In particular, there is at most one weak solution of
\eqref{f02}.
\end{lemma}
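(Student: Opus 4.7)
The approach I would take is to derive an $L^1$-contraction inequality for the difference $w := \rho^1 - \rho^2$ by testing the resulting equation against a smooth regularization of $\mathrm{sgn}(w)$. The key algebraic observation that makes this go through cleanly is that since $\varphi(r) = r(1+ar)$ is quadratic, one has the pointwise factorization
$$\varphi(\rho^1) - \varphi(\rho^2) \;=\; A\, w, \qquad A \;:=\; 1 + a(\rho^1 + \rho^2)\, .$$
Because $a > -1/2$ and $\rho^j \in [0,1]$, the coefficient $A$ is uniformly bounded above and uniformly bounded below by $\min(1, 1+2a) > 0$, and $\nabla A = a(\nabla \rho^1 + \nabla \rho^2)$ belongs to $L^2([0,T]\times\Omega)$ by hypothesis ({\bf H1}).

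First I would subtract the weak formulations ({\bf H2}) for $\rho^1$ and $\rho^2$: since both solutions share the boundary datum $b$, the $\Gamma$-integrals involving $\varphi(b)$ cancel and $w$ is seen to be a weak solution of $\partial_t w = \Delta(Aw)$ with zero trace on $[0,T]\times\Gamma$. In particular $w \in L^2([0,T], H^1_0(\Omega))$ and $\partial_t w \in L^2([0,T], H^{-1}(\Omega))$, which puts $w$ in $C([0,T], L^2(\Omega))$ so that pointwise-in-time traces make sense. Next I would fix a smooth, odd, non-decreasing function $\eta_1:\bb R \to [-1,1]$ with $\eta_1(s) = \mathrm{sgn}(s)$ for $|s|\ge 1$, set $\eta_\varepsilon(s) = \eta_1(s/\varepsilon)$, and let $\Psi_\varepsilon(s) = \int_0^s \eta_\varepsilon(r)\, dr$. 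Since $\eta_\varepsilon$ is Lipschitz with $\eta_\varepsilon(0) = 0$, the composition $\eta_\varepsilon(w)$ lies in $L^2([0,T], H^1_0(\Omega))$. Applying the standard chain rule in the Hilbert triple $H^1_0 \subset L^2 \subset H^{-1}$ and integrating by parts (no boundary contribution because $\eta_\varepsilon(w)|_\Gamma = 0$) yields
\begin{equation*}
\int_\Omega \Psi_\varepsilon(w_t)\, du - \int_\Omega \Psi_\varepsilon(w_0)\, du
\;=\; -\int_0^t\!\!\int_\Omega \eta'_\varepsilon(w)\, A\, \Vert\nabla w\Vert^2\, du\,ds
\;-\; \int_0^t\!\!\int_\Omega \eta'_\varepsilon(w)\, w\, \nabla w \cdot \nabla A\, du\,ds\, .
\end{equation*}

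The first right-hand side integral is non-positive because $A>0$ and $\eta'_\varepsilon \ge 0$. For the second, the pointwise bound $|\eta'_\varepsilon(w)\, w| \le \Vert\eta_1'\Vert_\infty \mathbf{1}_{\{|w|\le\varepsilon\}}$ gives control by $\Vert \eta_1'\Vert_\infty \int_0^t \!\int_{\{|w|\le\varepsilon\}} \Vert\nabla w\Vert\, \Vert\nabla A\Vert\, du\, ds$, and by Stampacchia's theorem $\nabla w = 0$ almost everywhere on $\{w=0\}$, so dominated convergence drives this bound to $0$ as $\varepsilon\downarrow 0$. Meanwhile $\Psi_\varepsilon \to |\cdot|$ with uniform domination, so the left-hand side converges to $\Vert w_t\Vert_1 - \Vert w_0\Vert_1$. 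This gives $\Vert w_t\Vert_1 \le \Vert w_0\Vert_1$, and repeating the argument on an arbitrary sub-interval $[s,t]$ yields the asserted monotonicity; uniqueness is the special case $\rho^1_0 = \rho^2_0$. I expect the main technical obstacle to be rigorously justifying the chain rule identity together with the integration by parts for $w$ of only $L^2([0,T], H^1_0)$ regularity with $\partial_t w \in L^2([0,T], H^{-1})$---standard, but requiring an approximation argument via time mollification.
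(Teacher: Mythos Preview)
Your proposal is correct and follows essentially the same $L^1$-contraction strategy as the paper: both test the difference of the two solutions against a smooth approximation of $\mathrm{sgn}(\rho^1-\rho^2)$, obtain a non-positive principal term, and show the remainder vanishes as the regularization parameter tends to zero. The differences are purely technical --- the paper uses the explicit regularization $R_\delta(u)=\tfrac{u^2}{2\delta}\mathbf{1}\{|u|\le\delta\}+(|u|-\tfrac{\delta}{2})\mathbf{1}\{|u|>\delta\}$, writes the flux difference as $\varphi'(\rho^1)\nabla\rho^1-\varphi'(\rho^2)\nabla\rho^2$ and uses the Lipschitz property of $\varphi'$ rather than your quadratic factorization $Aw$, and disposes of the remainder by Young's inequality with absorption into the good term (producing an explicit $O(\delta)$ error) instead of your appeal to Stampacchia's theorem and dominated convergence.
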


\begin{proof}
Fix two profiles $\rho_0^1$, $\rho_0^2: \Omega \to [0,1]$.  Let
$\rho^j$, $j=1$, $2$, be weak solutions of \eqref{f02} with initial
condition $\rho_0^j$. Fix $0\le s< t$.  For $\delta >0$ small, denote
by $R_\delta$ the function defined by
\begin{equation*} 
R_\delta (u) \; =\; \frac{u^2}{2\delta }\mb 1\{|u|\le \delta \}
\; +\; \big( | u| -\delta/2\big)
 \mb 1\{|u| >\delta\} \; .
\end{equation*}

Let $\psi: \bb R^d \to \bb R_+$ be a smooth approximation of the
identity:
\begin{equation*}
  \psi(u) \ge 0\;,\quad \text{supp } \psi \subset [-1,1]^d\;, \quad
\int \psi(u)\, du = 1\;.
\end{equation*}
For each positive $\epsilon$, define $\psi_\epsilon$ as
\begin{equation*}
  \psi_\epsilon (u) = \epsilon^{-d} \psi(u\epsilon^{-1})\;.
\end{equation*}

Taking the time derivative of the convolution of $\rho_t^j$ with
$\psi_\epsilon$, after some elementary computations based on
properties ({\bf H1}), ({\bf H2}) of weak solutions of \eqref{f02},
one can show that
\begin{eqnarray*}
&& \int_\Omega du \, R_\delta \big (\rho^1(t,u) -\rho^2(t,u) \big) 
-\int_\Omega du \, R_\delta \big (\rho^1(s,u) -\rho^2(s,u) \big)
\\ 
&&\quad = \; - \delta^{-1} \int_s^t d\tau \int_{A_\delta} du\,
\nabla (\rho^1 -\rho^2) \cdot \big\{  \varphi'(\rho^1) \nabla
\rho^1 -\varphi' (\rho^2) \nabla \rho^2\big\}\;,
\end{eqnarray*}
where $A_\delta$ stands for the subset of $[0,T]\times \Omega$ where
$|\rho^1(t,u) - \rho^2(t,u)| \le \delta$. We may rewrite the previous
expression as
\begin{equation*}
\aligned 
&\; - \delta^{-1} \int_s^t d\tau  \int_{A_\delta} du\,
\varphi' (\rho^1) \|  \nabla (\rho^1 -\rho^2) \|^2 \\ 
&\quad -\; \delta^{-1}\int_s^t d\tau \int_{A_\delta} du\,
\big\{ \varphi'(\rho^1)- \varphi'(\rho^2) \big\}
\nabla (\rho^1 -\rho^2) \cdot 
\nabla \rho^2 \; . 
\endaligned 
\end{equation*} 

Since $\rho^1,\rho^2$ are positive and bounded by $1$, there exists a
positive constant $c_0$ such that $c_0\le \varphi'
(\rho^j(\tau,u))$. The first line in the previous formula is then
bounded above by
\begin{equation*}
- c_0 \delta^{-1} \int_s^t d\tau  \int_{A_\delta} du\, \Vert \nabla
(\rho^1 -\rho^2) \Vert^2\; .
\end{equation*}
On the other hand, since $\varphi'$ is Lipschitz, on the set
$A_\delta$, $| \varphi'(\rho^1)- \varphi' (\rho^2)| \le M| \rho^1
-\rho^2| \le M\delta$ for some positive constant $M$.  In particular,
by Schwarz inequality, the second line of the previous formula is
bounded by
\begin{equation*} 
\delta^{-1} M A \int_s^t d\tau  \int_{A_\delta} du\, \Vert \nabla
(\rho^1 -\rho^2) \Vert^2\; +\; 
\delta M A^{-1} \int_s^t d\tau  \int_{A_\delta} du\, 
\Vert \nabla \rho^2 \Vert^2
\end{equation*} 
for every $A>0$. Choose $A=M^{-1} c_0$ to obtain that
\begin{equation*}
\aligned 
&\int_\Omega du \, R_\delta \big(\rho^1(t,u) -\rho^2(t,u) \big)
-\int_\Omega du \, R_\delta \big (\rho^1(s,u) -\rho^2(s,u) \big)\\
&\qquad \le \; \delta c_0^{-1} M^2  \int_0^t d\tau  \int du\, 
\Vert \nabla \rho^2 \Vert^2 \; .
\endaligned 
\end{equation*} 
Letting $\delta\downarrow 0$, we conclude the proof of the lemma
because $R_\delta (\cdot)$ converges to the absolute value function as
$\delta\downarrow 0$. 
\end{proof}

\begin{lemma}
\label{uniqH}
Fix two profiles $\rho_0^1$, $\rho_0^2: \Omega \to [0,1]$.  Let
$\rho^j$, $j=1$, $2$, be weak solutions of \eqref{f05} for the same
$H$ satisfying \eqref{fin} and with initial condition $\rho_0^j$.
Then, $\|\rho_t^1 -\rho_t^2\|_1$ decreases in time. In particular,
there is at most one weak solution of \eqref{f05} when $H$ satisfies
\eqref{fin}.
\end{lemma}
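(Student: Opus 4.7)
\medskip
\noindent\textbf{Proof proposal for Lemma \ref{uniqH}.}
The plan is to mimic closely the proof of Lemma \ref{lem1-ann}, tracking the new contribution produced by the drift $\nabla\cdot(\sigma(\lambda)\nabla H)$ in \eqref{f05}. As before, I fix $0\le s<t$, take the same smooth convex approximation $R_\delta$ of the absolute value, and the same mollifier $\psi_\epsilon$. Convolving the weak formulation (H2') of \eqref{f05} with $\psi_\epsilon$ allows me to differentiate in time the quantity $\int_\Omega R_\delta(\rho^1_\tau\ast\psi_\epsilon - \rho^2_\tau\ast\psi_\epsilon)\,du$, integrate by parts (there is no boundary contribution because $\rho^1-\rho^2$ has zero trace on $\Gamma$ by (H2'), and the test function $R'_\delta(\rho^1-\rho^2)$ inherits this property), and let $\epsilon\downarrow 0$. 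The outcome is
\begin{equation*}
\begin{split}
& \int_\Omega R_\delta(\rho^1_t-\rho^2_t)\,du - \int_\Omega R_\delta(\rho^1_s-\rho^2_s)\,du \\
&\quad =\; -\,\delta^{-1}\!\int_s^t\!d\tau\!\int_{A_\delta}\!\nabla(\rho^1-\rho^2)\cdot\bigl\{\varphi'(\rho^1)\nabla\rho^1-\varphi'(\rho^2)\nabla\rho^2\bigr\}du\\
&\qquad +\;\delta^{-1}\!\int_s^t\!d\tau\!\int_{A_\delta}\!\nabla(\rho^1-\rho^2)\cdot\bigl\{\sigma(\rho^1)-\sigma(\rho^2)\bigr\}\nabla H_\tau\,du ,
\end{split}
\end{equation*}
where $A_\delta=\{(\tau,u):|\rho^1(\tau,u)-\rho^2(\tau,u)|\le\delta\}$.

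The first line on the right is handled exactly as in Lemma \ref{lem1-ann}: one splits it into the non-positive quadratic $-\delta^{-1}\int\varphi'(\rho^1)\|\nabla(\rho^1-\rho^2)\|^2$, with $\varphi'\ge c_0>0$ since $\rho^j\in[0,1]$ and $\varphi(r)=r(1+ar)$, plus a cross term that is controlled via the Lipschitz continuity of $\varphi'$ on $[0,1]$ (so $|\varphi'(\rho^1)-\varphi'(\rho^2)|\le M\delta$ on $A_\delta$) and the Cauchy--Schwarz inequality with a free parameter $A>0$. For the new second line I use that $\sigma(r)=2r(1-r)(1+2ar)$ is $\mc C^1$ on $[0,1]$, hence globally Lipschitz with constant $M'$, to get $|\sigma(\rho^1)-\sigma(\rho^2)|\le M'\delta$ on $A_\delta$. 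Cauchy--Schwarz with a free parameter $B>0$ then gives the bound
\begin{equation*}
\delta^{-1}\frac{M'B}{2}\int_s^t\!d\tau\!\int_{A_\delta}\|\nabla(\rho^1-\rho^2)\|^2 du \;+\;\delta\,\frac{M'}{2B}\int_0^T\!d\tau\!\int_\Omega\|\nabla H_\tau\|^2 du.
\end{equation*}

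Choosing $A$ and $B$ small enough that $MA+M'B/2<c_0$, the three singular (in $1/\delta$) pieces combine into a non-positive quantity that I discard. What remains is bounded by $\delta\,K\bigl(\int_0^T\!\!\int_\Omega\|\nabla\rho^2\|^2+\int_0^T\!\!\int_\Omega\|\nabla H\|^2\bigr)$, which is finite thanks to condition (H1') on $\rho^2$ and to the assumption \eqref{fin} on $H$. Letting $\delta\downarrow 0$ yields $\|\rho^1_t-\rho^2_t\|_1\le\|\rho^1_s-\rho^2_s\|_1$, since $R_\delta$ converges pointwise monotonically to $|\cdot|$. Uniqueness for \eqref{f05} (under \eqref{fin}) follows by setting $\rho^1_0=\rho^2_0=\gamma$. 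The main obstacle is precisely estimate of the new $H$-term: without the integrability condition \eqref{fin} one could not make the coefficient of $\int\|\nabla H\|^2$ tend to zero with $\delta$, which is why the hypothesis on $H$ is needed.
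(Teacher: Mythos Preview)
Your proof is correct and follows essentially the same approach as the paper's own proof: repeat the $R_\delta$ argument of Lemma \ref{lem1-ann}, pick up the extra term $\delta^{-1}\int_{A_\delta}\{\sigma(\rho^1)-\sigma(\rho^2)\}\nabla(\rho^1-\rho^2)\cdot\nabla H$, use the Lipschitz continuity of $\sigma$ on $[0,1]$ together with Young's inequality to absorb the $\delta^{-1}\|\nabla(\rho^1-\rho^2)\|^2$ piece into the good quadratic term and leave a remainder $\delta\,C\int\|\nabla H\|^2$ that vanishes as $\delta\downarrow 0$ thanks to \eqref{fin}. The paper's proof is a two-line sketch of exactly this, so your write-up is in fact more detailed than the original.
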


\begin{proof}
Following the same procedure of the proof of the previous lemma, we
get first
\begin{eqnarray*}
&& \int_\Omega du \, R_\delta \big (\rho^1(t,u) -\rho^2(t,u) \big) 
-\int_\Omega du \, R_\delta \big (\rho^1(s,u) -\rho^2(s,u) \big)
\\ 
&&\quad = \; - \delta^{-1} \int_s^t d\tau \int_{A_\delta} du\,
\nabla (\rho^1 -\rho^2) \cdot \big\{  \varphi'(\rho^1) \nabla
\rho^1 -\varphi' (\rho^2) \nabla \rho^2\big\}
\\
&&\qquad\,\,- \delta^{-1} \int_s^t d\tau \int_{A_\delta} du\,
\big\{\sigma(\rho^1)-\sigma(\rho^2)\big\}\nabla (\rho^1 -\rho^2) \cdot \nabla H
\;,
\end{eqnarray*}
and then
\begin{eqnarray*}
&&\int_\Omega du \, R_\delta \big(\rho^1(t,u) -\rho^2(t,u) \big)
-\int_\Omega du \, R_\delta \big (\rho^1(s,u) -\rho^2(s,u) \big)
\\
&&\quad \leq \; \delta C_1  \int_0^t d\tau  \int du\, 
\Vert \nabla \rho^2 \Vert^2  + \delta C_2\int_0^t d\tau  \int du\, 
\Vert \nabla H \Vert^2\; ,
\end{eqnarray*}
for some positive constants $C_1$ and $C_2$. Hence, letting
$\delta\downarrow 0$ we conclude the proof of the lemma.
\end{proof}

The same ideas permit to show the monotonicity of weak solutions of
\eqref{f02}. This is the content of the next result which plays a
fundamental role in proving existence and uniqueness of weak solutions
of \eqref{f01}.

\begin{lemma}
\label{lembis-ann} 
Fix two profiles $\rho_0^1$, $\rho_0^2: \Omega \to [0,1]$.  Let
$\rho^j$, $j=1$, $2$, be the weak solutions of (\ref{f02}) with
initial condition $\rho_0^j$. Assume that there exists $s\ge 0$ such
that
\begin{equation*}
\lambda \big\{ u\in \Omega\ :\ \ 
\rho^1(s,u) \le\rho^2(s,u)  \big\}=1\; , 
\end{equation*} 
where $\lambda$ is the Lebesgue measure on $\Omega$. Then, for all
$t\ge s$
\begin{equation*}
\lambda \big\{ u\in \Omega\ :\ \ 
\rho^1(t,u) \le\rho^2(t,u)  \big\}=1\;.
\end{equation*}
\end{lemma}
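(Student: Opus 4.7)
The strategy is to adapt the proof of Lemma \ref{lem1-ann} by replacing the even, absolute-value approximation $R_\delta$ with a one-sided, positive-part approximation. Define
\begin{equation*}
R_\delta^+(u) \;=\; \frac{u^2}{2\delta}\, \mb 1\{0\le u\le \delta\} \;+\; \bigl(u-\delta/2\bigr)\mb 1\{u>\delta\}\, ,
\end{equation*}
so that $R_\delta^+\in C^1(\bb R)$, $R_\delta^+(u)=0$ for $u\le 0$, $(R_\delta^+)'\ge 0$ with $(R_\delta^+)'(0)=0$, and $R_\delta^+(u)\to u_+$ as $\delta\downarrow 0$. The vanishing of $(R_\delta^+)'$ at zero, combined with the fact that $\rho^1$ and $\rho^2$ share the same Dirichlet datum $b$ on $\Gamma$, will make the boundary contribution disappear in the integration by parts, exactly as in Lemma \ref{lem1-ann}.

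The plan is then to mimic the computation from Lemma \ref{lem1-ann}, using the same mollification trick (convolving $\rho^j$ with $\psi_\epsilon$, differentiating in time using the weak formulation (H2), and letting $\epsilon\downarrow 0$) applied now with $R_\delta^+$ as the outer convex function. This yields, for $t\ge s$,
\begin{equation*}
\int_\Omega R_\delta^+\bigl(\rho^1(t,u)-\rho^2(t,u)\bigr)\, du \;-\; \int_\Omega R_\delta^+\bigl(\rho^1(s,u)-\rho^2(s,u)\bigr)\, du
\end{equation*}
\begin{equation*}
=\; -\,\delta^{-1}\int_s^t d\tau \int_{A_\delta^+} du\, \nabla(\rho^1-\rho^2)\cdot \bigl\{\varphi'(\rho^1)\nabla\rho^1-\varphi'(\rho^2)\nabla\rho^2\bigr\}\, ,
\end{equation*}
where $A_\delta^+=\{(\tau,u): 0<\rho^1(\tau,u)-\rho^2(\tau,u)<\delta\}$. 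Splitting the integrand into $\varphi'(\rho^1)\|\nabla(\rho^1-\rho^2)\|^2$ plus the cross term $\{\varphi'(\rho^1)-\varphi'(\rho^2)\}\nabla(\rho^1-\rho^2)\cdot\nabla\rho^2$, the lower bound $\varphi'\ge c_0>0$ on $[0,1]$ and the Lipschitz estimate $|\varphi'(\rho^1)-\varphi'(\rho^2)|\le M\delta$ on $A_\delta^+$ allow the same weighted Schwarz inequality as in the proof of Lemma \ref{lem1-ann} to absorb the cross term, leaving only a remainder of order $\delta$.

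By the hypothesis at time $s$, $R_\delta^+(\rho^1(s,\cdot)-\rho^2(s,\cdot))\equiv 0$ a.e., so the previous display becomes
\begin{equation*}
\int_\Omega R_\delta^+\bigl(\rho^1(t,u)-\rho^2(t,u)\bigr)\, du \;\le\; \delta\, c_0^{-1} M^2 \int_s^t d\tau \int_\Omega \|\nabla\rho^2\|^2\, du\, .
\end{equation*}
Letting $\delta\downarrow 0$ and using Fatou together with $R_\delta^+(u)\to u_+$ yields $\int_\Omega (\rho^1(t,u)-\rho^2(t,u))_+\, du=0$, whence $\rho^1(t,\cdot)\le \rho^2(t,\cdot)$ a.e.\ on $\Omega$. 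The main obstacle is the same technical point acknowledged in the proof of Lemma \ref{lem1-ann}: the weak formulation (H2) only provides smooth test functions, so the identity above must be obtained by first convolving both $\rho^j$ with a spatial mollifier $\psi_\epsilon$, performing the computation at the regularized level (where $R_\delta^+\circ(\rho^1_\epsilon-\rho^2_\epsilon)$ is an admissible nonlinear quantity), and then passing to the limit $\epsilon\downarrow 0$ using the $H^1$-regularity of the weak solutions granted by (H1).
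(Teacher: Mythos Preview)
Your proposal is correct and follows exactly the paper's own argument: the paper's proof simply says to repeat the proof of Lemma~\ref{lem1-ann} with $R_\delta^+(u)=R_\delta(u)\,\mb 1\{u\ge 0\}$ in place of $R_\delta$, which is precisely what you have spelled out in detail. The only difference is that you have written out the steps explicitly (the set $A_\delta^+$, the use of the hypothesis at time $s$, and the passage to the limit), whereas the paper leaves these to the reader.
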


\begin{proof}
We just need to repeat the same proof of the Lemma \ref{lem1-ann} 
by considering the function $R_\delta^+(u) =R_\delta(u) \mb 1\{u \ge
0\}$ instead of $R_\delta$.
\end{proof}

\begin{corollary}
\label{s03}
Denote by $\rho^0$ (resp. $\rho^1$) the weak solution of \eqref{f02}
associated to the initial profile constant equal to $0$
(resp. $1$). Then, for $0\le s\le t$, $\rho^1_t (\cdot) \le \rho^1_s
(\cdot)$ and $\rho^0_s(\cdot) \le \rho^0_t(\cdot)$ a.e.
\end{corollary}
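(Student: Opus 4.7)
The plan is to deduce time-monotonicity of $\rho^1$ and $\rho^0$ from the comparison principle of Lemma \ref{lembis-ann} by exploiting the fact that equation \eqref{f02} is autonomous in time, so that any time translate of a weak solution is again a weak solution.

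\smallskip

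First I would handle $\rho^1$. Fix $s\ge 0$ and define $\tilde\rho(r,u) = \rho^1(s+r,u)$ for $r\ge 0$. A direct inspection of conditions (\textbf{H1}), (\textbf{H2}) in the definition of weak solution (changing the time variable from $\tau$ to $\tau-s$) shows that $\tilde\rho$ is again a weak solution of \eqref{f02}, now with initial profile $\rho^1(s,\cdot)$. Since every weak solution takes values in $[0,1]$, we have $\tilde\rho(0,\cdot) = \rho^1(s,\cdot)\le 1 = \rho^1(0,\cdot)$ almost everywhere. Applying Lemma \ref{lembis-ann} to the pair $\rho_0^1 = \rho^1(s,\cdot)$, $\rho_0^2 \equiv 1$ (corresponding to weak solutions $\tilde\rho$ and $\rho^1$ respectively), the inequality propagates to every $r\ge 0$:
\begin{equation*}
\rho^1(s+r,\cdot) \;=\; \tilde\rho(r,\cdot) \;\le\; \rho^1(r,\cdot) \quad\text{a.e.}
\end{equation*}
Given $0\le s'\le t'$, I then set $r=s'$ and $s=t'-s'\ge 0$ to recover $\rho^1(t',\cdot)\le \rho^1(s',\cdot)$ a.e., which is the desired monotonicity.

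\smallskip

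The argument for $\rho^0$ is symmetric. With $\tilde\rho(r,u) = \rho^0(s+r,u)$, which is a weak solution with initial profile $\rho^0(s,\cdot)$, one has $\rho^0(0,\cdot) \equiv 0 \le \rho^0(s,\cdot) = \tilde\rho(0,\cdot)$ a.e. Lemma \ref{lembis-ann} now yields $\rho^0(r,\cdot)\le \rho^0(s+r,\cdot)$ for all $r\ge 0$, from which $\rho^0_{s'}\le \rho^0_{t'}$ for $s'\le t'$ follows exactly as above.

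\smallskip

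The only mildly delicate point, and thus the main obstacle, is the verification that the time-translated function $\tilde\rho$ is indeed a weak solution in the sense of (\textbf{H1})--(\textbf{H2}): one needs to know that $\rho^1(s,\cdot)$ and $\rho^0(s,\cdot)$ are well defined (as measurable, $[0,1]$-valued functions on $\Omega$) for the specific time $s$ being translated, and that the formulation of (\textbf{H2}) with test functions in $\mc C^{1,2}_0(\overline{\Omega_T})$ is stable under time shifts. Both facts follow from standard considerations: $\rho^j\in L^2([0,T],H^1(\Omega))$ together with the bound on $\partial_t\rho^j$ in $H^{-1}(\Omega)$ (which in the present context is built into the weak formulation through the integration by parts) imply that $t\mapsto \rho^j_t$ admits a continuous representative in $L^2(\Omega)$, so $\rho^j(s,\cdot)$ is unambiguously defined for every $s\in[0,T]$; and a change of variables in the test function reduces (\textbf{H2}) on the translated interval to (\textbf{H2}) on the original interval.
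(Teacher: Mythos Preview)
Your proof is correct and follows essentially the same approach as the paper: both define the time-shifted function $\tilde\rho(r,\cdot)=\rho^1(s+r,\cdot)$, observe it is a weak solution with initial data $\rho^1(s,\cdot)\le 1$, and apply the comparison principle (Lemma~\ref{lembis-ann}) to deduce $\rho^1(s+r,\cdot)\le\rho^1(r,\cdot)$. The paper's proof is terser, omitting the symmetric argument for $\rho^0$ and the justification that time translates of weak solutions are weak solutions, both of which you spell out.
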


\begin{proof}
Fix $s\ge 0$. Note that $\hat \rho(r,u)$ defined by $\hat \rho(r,u) =
\rho^1(s+r,u)$ is a weak solution of \eqref{f02} with initial
condition $\rho^1(s,u)$. Since $\rho^1(s,u) \le 1 = \rho^1(0,u)$, by
the previous lemma, for all $r\ge 0$, $\rho^1(r+s,u) \le \rho^1(r,u)$
for almost all $u$.
\end{proof}

We now turn to existence and uniqueness of the boundary value problem
\eqref{f01}. Recall the notation introduced in the beginning of
Section \ref{sc}.  Consider the following classical
boundary-eigenvalue problem for the Laplacian:
\begin{equation}
\label{eq:laplace-eq}
 \left\{ \begin{array}{lll}
 -\Delta U =\alpha  U \,,  & \  \\
  U \in  H^1_0(\Omega) \, . & \ \end{array}
     \right. 
\end{equation}
By the Sturm--Liouville theorem (cf. \cite{E}, Subsection 9.12.3),
problem (\ref{eq:laplace-eq}) has a countable system $\{U_n,\alpha_n :
n\ge 1\}$ of eigensolutions which contains all possible eigenvalues.
The set $\{U_n : n\ge 1\}$ of eigenfunctions forms a complete
orthonormal system in the Hilbert space $L^2(\Omega)$, each $U_n$
belong to $H^1_0 (\Omega)$, all the eigenvalues $\alpha_n$, have
finite multiplicity and
\begin{equation*}
0<\alpha_1\le \alpha_2\le \cdots \le \alpha_n \le \cdots \to \infty \; .
\end{equation*}
The set $\{U_n/\alpha_n^{1/2} : n\ge 1\}$ is a complete orthonormal
system in the Hilbert space $H^1_0 (\Omega)$.  Hence, a function $V$
belongs to $L^2(\Omega)$ if and only if
\begin{equation*}
V \;=\; \lim_{n\to \infty} \sum_{k=1}^n \<V,U_k\>_2 \, U_k 
\end{equation*}
in $L^2(\Omega)$. In this case,
\begin{equation*}
\<V,W\>_2 \;=\;  \sum_{k=1}^\infty  \<V,U_k\>_2\,
\overline{ \<W,U_k\>_2 }
\end{equation*}
for all $W$ in $L^2(\Omega)$. Moreover, a function $V$ belongs to
$H^1_0(\Omega)$ if and only if
\begin{equation*}
V \;=\; \lim_{n\to \infty} \sum_{k=1}^n \<V,U_k\>_2 \, U_k 
\end{equation*}
in $H^1_0(\Omega)$. In this case,
\begin{equation}
\label{f04}
\<V,W\>_{1,2,0} \;=\;  \sum_{k=1}^\infty  \alpha_k \<V,U_k\>_2\,
\overline{ \<W,U_k\>_2 }
\end{equation}
for all $W$ in $H^1_0(\Omega)$. 

\begin{lemma}
\label{s02}
Fix two profiles $\rho_0^1$, $\rho_0^2: \Omega \to [0,1]$.  Let
$\rho^j$, $j=1$, $2$, be the weak solutions of (\ref{f02}) with
initial condition $\rho_0^j$. Then, 
\begin{equation*}
\int_{0}^\infty  \|\rho_t^1 -\rho_t^2 \|_1^2  \, dt
\;<\; \infty\; .
\end{equation*}
In particular,
\begin{equation*}
\lim_{t\to \infty} \|\rho_t^1 -\rho_t^2\|_1 \; =0 \; \; .
\end{equation*}
\end{lemma}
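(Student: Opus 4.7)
The plan is to control $\|\rho^1_t - \rho^2_t\|_1$ via an $H^{-1}$ energy estimate on the difference, and then to deduce pointwise convergence from the monotonicity already supplied by Lemma~\ref{lem1-ann}. Set $w_t := \rho^1_t - \rho^2_t$. Since both solutions have trace $b$ on $\Gamma$ and lie in $L^2([0,T], H^1(\Omega))$, we have $w \in L^2([0,T], H^1_0(\Omega))$; subtracting the weak formulations of \eqref{f02} shows that $\partial_t w_t = \Delta \psi_t$ holds in $L^2([0,T], H^{-1}(\Omega))$, with $\psi_t := \varphi(\rho^1_t) - \varphi(\rho^2_t)$, which also vanishes on $\Gamma$ and hence belongs to $L^2([0,T], H^1_0(\Omega))$.

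The central step is a differential identity for $\|w_t\|_{-1}^2$. For each $t$ let $v_t \in H^1_0(\Omega)$ solve $-\Delta v_t = w_t$, so that $\|w_t\|_{-1}^2 = \int_\Omega |\nabla v_t|^2\,du = \int_\Omega w_t v_t\,du$. Because $w \in L^2([0,T], H^1_0)$ and $\partial_t w \in L^2([0,T], H^{-1})$, the isometry $(-\Delta)^{-1}$ places both $v$ and $\partial_t v$ in $L^2([0,T], H^1_0)$, so $t \mapsto \|v_t\|_{H^1_0}^2 = \|w_t\|_{-1}^2$ is absolutely continuous, and a direct computation together with the symmetry of $(-\Delta)^{-1}$ yields
\begin{equation*}
\tfrac{d}{dt}\|w_t\|_{-1}^2 \;=\; 2\langle \partial_t w_t, v_t \rangle_{-1,1} \;=\; -2\int_\Omega \nabla \psi_t\cdot\nabla v_t\,du \;=\; -2\int_\Omega \psi_t\, w_t\,du\;,
\end{equation*}
where the second equality uses the weak equation for $w$ together with $v_t \in H^1_0$, and the third uses $-\Delta v_t = w_t$ with $\psi_t \in H^1_0$. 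Since $\varphi'(r) = 1 + 2ar \ge c_0 := \min(1, 1+2a) > 0$ on $[0,1]$, the mean value theorem gives $\psi_t w_t \ge c_0 w_t^2$ pointwise, whence
\begin{equation*}
\tfrac{d}{dt}\|w_t\|_{-1}^2 \;\le\; -2c_0\,\|w_t\|_2^2\;.
\end{equation*}

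Integrating from $0$ to $T$ and using $\|w_0\|_{-1}^2 \le C_P\|w_0\|_2^2 \le C_P|\Omega|$ (Poincar\'e combined with $|w_0| \le 1$), we obtain $\int_0^T \|w_t\|_2^2\,dt \le C_P|\Omega|/(2c_0)$ uniformly in $T$. Letting $T\uparrow\infty$ and invoking Cauchy--Schwarz in the form $\|w_t\|_1 \le |\Omega|^{1/2}\|w_t\|_2$ gives the first conclusion. For the second, Lemma~\ref{lem1-ann} ensures $t\mapsto\|w_t\|_1$ is nonincreasing, so it converges to some $L \ge 0$; if $L > 0$ then $\int_0^\infty\|w_t\|_1^2\,dt = +\infty$, contradicting the first part, so $L = 0$. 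The only delicate technical point is the differentiation formula for $\|w_t\|_{-1}^2$, but under the regularity established above it reduces, via the isometry $w\leftrightarrow v$, to the elementary absolute continuity of $\|v_t\|_{H^1_0}^2$ for $v \in H^1([0,T], H^1_0(\Omega))$, so no Green-kernel machinery is required.
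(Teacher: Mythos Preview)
Your proof is correct and is essentially the same $H^{-1}$ energy argument the paper uses: the paper's quantity $F_n(t)=\sum_{k\le n}\alpha_k^{-1}|\langle w_t,U_k\rangle_2|^2$ is precisely the eigenfunction truncation of $\|w_t\|_{-1}^2$, and after differentiating and passing to the limit they obtain exactly your identity $\tfrac{d}{dt}\|w_t\|_{-1}^2=-2\int_\Omega \psi_t\,w_t$, then bound $(\varphi(b)-\varphi(a))(b-a)\ge c_0(b-a)^2$ and conclude via Cauchy--Schwarz and Lemma~\ref{lem1-ann} just as you do. The only cosmetic difference is that the paper works through the spectral decomposition of $-\Delta$ rather than the abstract inverse $(-\Delta)^{-1}$; your formulation avoids that setup at the cost of the short regularity check for $t\mapsto\|v_t\|_{H^1_0}^2$, which you handle correctly.
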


\begin{proof}
Fix two profiles $\rho_0^1$, $\rho_0^2: \Omega \to [0,1]$ and let
$\rho^j$, $j=1$, $2$, be the weak solutions of (\ref{f02}) with initial
condition $\rho_0^j$. Let $\rho^j_t (\cdot)= \rho^j(t,\cdot)$.  For
$n\ge 1$ let $F_n:\bb R_+ \to \R$ be the function defined by
\begin{equation*}
F_n(t) =\sum_{k=1}^n \frac{1}{\alpha_k} \big| \< \rho_t^1-\rho_t^2
\, , \, U_k\>_2\big|^2\; .
\end{equation*}

Since $\rho^1,\rho^2 $ are weak solutions, $F_n$ is time
differentiable. Since $\Delta U_k =-\alpha_k U_k$ and since
$\alpha_k>0$, for $t>0$,
\begin{equation}
\begin{aligned}
\frac{d}{dt} F_n(t) = &- \sum_{k=1}^n 
\Big\{ \< \rho_t^1-\rho_t^2 \,,\, U_k \>_2 \,
\overline{ \< \varphi(\rho_t^1) -\varphi(\rho_t^2)\,,\, U_k \>_2 }\\
\ &\qquad\qquad\qquad\qquad
+ \< \varphi(\rho_t^1) -\varphi(\rho_t^2) \,,\,  U_k \>_2 \,
\overline{ \< \rho_t^1-\rho_t^2 \,,\, U_k \>_2 } \Big\}\, .
\end{aligned}
\label{eq:laplace-eq3}
\end{equation}

Fix $t_0 >0$. Integrating (\ref{eq:laplace-eq3}) in time, applying
identity (\ref{f04}), and letting $n\uparrow \infty$, we get
\begin{equation*}
\begin{aligned}
\int_{t_0}^T dt \int_\Omega 
\big[ \varphi(\rho_t^1(u)) -\varphi(\rho_t^2(u))\big]
     \big[ \rho_t^1(u)-\rho_t^2(u)\big]  du \;=  &
\lim_{n \to \infty} \frac{1}{2} \Big\{ F_n(t_0) -F_n(T)\Big\}\\
\; \le & \; \frac{1}{2 \alpha_1}     
 \| \rho_{t_0}^1-\rho_{t_0}^2 \|_2^2 
\end{aligned}
\end{equation*}
for all $T>t_0$. Since $\rho_{t_0}^1-\rho_{t_0}^2$ belongs to $L^2
(\Omega)$,
\begin{equation*}
\int_{t_0}^\infty dt \int_\Omega 
\big[ \varphi(\rho_t^1(u)) -\varphi(\rho_t^2(u))\big]
\big[ \rho_t^1(u)-\rho_t^2(u)\big]\, du \;<\; \infty\; .
\end{equation*}

There exists a positive constant $C_2$ such that, for all $a,b \in
[0,1]$
\begin{equation*}
C_2 ( b- a)^2 \le \big(\varphi (b) -\varphi(a) \big) (b-a)\; .
\end{equation*}
On the other hand, by Schwarz inequality, for all $t\ge t_0$,
\begin{equation*}
\|\rho_t^1 -\rho_t^2 \|_1^2 \le 2 \|\rho_t^1 -\rho_t^2 \|_2^2\; . 
\end{equation*}
Therefore
\begin{equation*}
\int_{t_0}^\infty  \|\rho_t^1 -\rho_t^2 \|_1^2 \,  dt
<\infty\; .
\end{equation*}
and the first statement of the lemma is proved because the integral
between $[0,t_0]$ is bounded by $4t_0$.  The second statement of the
lemma follows from the first one and from Lemma \ref{lem1-ann}.
\end{proof}

\begin{proposition}
\label{s04}
There exists a unique weak solution of the boundary value problem
\eqref{f01}.
\end{proposition}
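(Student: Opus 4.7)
The strategy is to leverage the parabolic machinery of Section \ref{sec3} together with Lemmas \ref{lem1-ann}, \ref{lembis-ann}, and \ref{s02}, which were proved in anticipation of exactly this kind of argument.

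For uniqueness, I would observe that any weak solution $\rho$ of \eqref{f01} gives rise, via the time-independent extension $\tilde\rho(t,u):=\rho(u)$, to a weak solution of the parabolic problem \eqref{f02} with initial data $\rho$. Condition (H1) for $\tilde\rho$ reduces to (S1) for $\rho$; condition (H2) holds because its left-hand side vanishes identically (by the fundamental theorem of calculus applied to $G_T-G_0-\int_0^T \partial_s G_s\,ds$) and its right-hand side is the time-integral of the identity (S2) applied to $G_s$, which holds for each $s$. Given two weak solutions $\rho_1,\rho_2$ of \eqref{f01}, Lemma \ref{lem1-ann} yields that $\|\rho_1-\rho_2\|_1=\|\tilde\rho_1(t)-\tilde\rho_2(t)\|_1$ is constant in $t$, while Lemma \ref{s02} forces this constant to vanish.

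For existence, let $\rho^0,\rho^1$ be the weak solutions of \eqref{f02} with constant initial data $0$ and $1$. Corollary \ref{s03} gives monotonicity: $\rho^0_t$ is non-decreasing and $\rho^1_t$ is non-increasing, both taking values in $[0,1]$. Hence both converge pointwise a.e. to limits $\bar\rho^0\le\bar\rho^1$, and Lemma \ref{s02} forces $\bar\rho^0=\bar\rho^1=:\bar\rho$ a.e. To verify (S2) for $\bar\rho$, I would fix $G\in\mathcal{C}_0^2(\Omega)$ and apply (H2) to $\rho^1$ on $[0,T]$ with the time-independent test function $G_t\equiv G$, obtaining
\begin{equation*}
\int_\Omega G(u)\,[\rho^1(T,u)-1]\,du \;=\; \int_0^T ds\int_\Omega \Delta G\,\varphi(\rho^1_s)\,du \;-\; T\int_\Gamma \varphi(b)\,\mathbf{n}_1\,\partial_{u_1}G\,dS.
\end{equation*}
Dividing by $T$ and sending $T\uparrow\infty$, the left-hand side is $O(1/T)$ while dominated convergence (using $\rho^1_s\to\bar\rho$ a.e. and boundedly) shows the Cesàro average of the bulk term converges to $\int_\Omega \Delta G\,\varphi(\bar\rho)\,du$, giving (S2) exactly.

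The main obstacle is verifying condition (S1), namely $\bar\rho\in H^1(\Omega)$, because the convergence $\rho^1_t\to\bar\rho$ is only pointwise. My approach is to derive a uniform-in-$T$ energy estimate $\int_0^\infty \|\nabla \rho^1_t\|_2^2\,dt<\infty$ by testing \eqref{f02} (formally) against $\varphi(\rho^1_t)-\varphi(\beta)$, which has zero trace; the justification requires a smoothing-in-time argument analogous to the derivation of \eqref{ibp0} in Lemma \ref{lem03}. From this bound one extracts a sequence $t_n\to\infty$ along which $\rho^1_{t_n}$ is $H^1$-bounded, hence weakly convergent in $H^1(\Omega)$ to a limit which must coincide with $\bar\rho$ by the pointwise convergence, giving $\bar\rho\in H^1(\Omega)$. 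An alternative route, which sidesteps this estimate altogether, is to build the solution directly: solve the classical linear Dirichlet problem $\Delta U=0$ in $\Omega$ with $U|_\Gamma=\varphi(b)$ (admitting a unique $U\in H^1(\Omega)$ with $U\in[\min\varphi(b),\max\varphi(b)]\subset(0,1+a)$ by the maximum principle) and set $\bar\rho:=\varphi^{-1}(U)$, which lies in $H^1(\Omega)$ because $\varphi^{-1}\in\mathcal{C}^1([0,1+a])$; condition (S2) then follows from a single integration by parts using $G|_\Gamma=0$ and $\Delta U=0$ weakly. I would present the parabolic route as the more natural one given the paper's overall philosophy, but either works.
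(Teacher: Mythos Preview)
Your uniqueness argument is correct and essentially identical to the paper's.

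For existence, you and the paper both construct $\bar\rho$ as the common monotone limit of $\rho^0_t$ and $\rho^1_t$, but then the approaches diverge. You try to verify (S1) and (S2) for $\bar\rho$ directly, and you correctly identify (S1) as the obstacle. The paper sidesteps this obstacle entirely with a clean trick: having obtained $\bar\rho$, it invokes Proposition~\ref{s05} to produce the weak solution $\bar\rho(t,\cdot)$ of the \emph{parabolic} problem \eqref{f02} with initial datum $\bar\rho$, and then uses the sandwich $\rho^0(t,\cdot)\le\bar\rho\le\rho^1(t,\cdot)$ together with Lemma~\ref{lembis-ann} to deduce $\rho^0(t+s,\cdot)\le\bar\rho(s,\cdot)\le\rho^1(t+s,\cdot)$ for all $s,t\ge0$, whence $\bar\rho(s,\cdot)=\bar\rho$ for all $s$. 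Now (H1) for the parabolic solution immediately gives $\bar\rho\in H^1(\Omega)$, and (H2) with a time-independent test function collapses to (S2). No energy estimate is needed.

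Your parabolic route has a genuine error as written: the estimate $\int_0^\infty\|\nabla\rho^1_t\|_2^2\,dt<\infty$ is false whenever $b$ is nonconstant, since $\rho^1_t\to\bar\rho$ and $\|\nabla\bar\rho\|_2>0$. Testing against $\varphi(\rho^1_t)-\varphi(\beta)$ only yields the time-averaged bound $\frac{1}{T}\int_0^T\|\nabla\varphi(\rho^1_t)\|_2^2\,dt\le C$, which is what you actually need to extract an $H^1$-bounded subsequence; the rest of your argument then goes through. Your alternative elliptic route via $\bar\rho=\varphi^{-1}(U)$ with $U$ harmonic is correct and self-contained, but foreign to the paper's parabolic philosophy. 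The paper's trick is the most economical: it recycles the parabolic existence result one more time instead of proving any new estimate.
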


\begin{proof}
We start with existence. Let $\rho^1 (t,u)$ (resp. $\rho^0(t,u)$) be
the weak solution of the boundary value problem \eqref{f02} with
initial profile constant equal to $1$ (resp. $0$). By Lemma
\eqref{lembis-ann}, the sequence of profiles $\{\rho^1(n, \cdot) :
n\ge 1\}$ (resp. $\{\rho^0(n, \cdot) : n\ge 1\}$) decreases (resp.
increases) to a limit denoted by $\rho^+(\cdot)$ (resp.
$\rho^-(\cdot)$). In view of Lemma \ref{s02}, $\rho^+ = \rho^-$ almost
surely. Denote this profile by $\bar\rho$ and by $\bar\rho(t,\cdot)$
the solution of \eqref{f02} with initial condition $\bar\rho$. Since
$\rho^0(t, \cdot)\le \bar\rho (\cdot)\le \rho^1(t,\cdot)$ for all
$t\ge 0$, by Lemma \ref{lembis-ann}, $\rho^0(t+s, \cdot)\le \bar\rho
(s,\cdot)\le \rho^1(t+s,\cdot)$ a.e.  for all $s$, $t\ge 0$. Letting
$t\uparrow\infty$, we obtain that $\bar\rho (s,\cdot)=
\bar\rho(\cdot)$ a.e. for all $s$. In particular, $\bar\rho$ is a solution
of \eqref{f01}.

Uniqueness is simpler. Assume that $\rho^1$, $\rho^2:\Omega\to [0,1]$
are two weak solution of \eqref{f01}. Then, $\rho^j(t,u) = \rho^j(u)$,
$j=1$, $2$, are two stationary weak solutions of \eqref{f02}. By Lemma
\ref{s02}, $\rho^1 = \rho^2$ almost surely.
\end{proof}

\end{document}